\def\d{~\mbox{d}}
\def\T{{\mathcal T_h}}
\DeclarePairedDelimiter\jump{\llbracket}{\rrbracket}
\DeclarePairedDelimiter\tnorm{\vert\hspace{-0.3mm}\Vert}{\vert\hspace{-0.3mm}\Vert}
\DeclarePairedDelimiter\floor{\lfloor}{\rfloor}
\def\p{\partial}
\DeclareMathOperator{\supp}{supp}
\DeclareMathOperator{\grad}{grad}
\DeclareMathOperator{\diam}{diam}
\def\R{\mathbb R}
\def\N{\mathbb N}
\def\Np{\mathbb N^+}
\def \pp {\mathbf{p}}
\DeclarePairedDelimiter\pair{\langle}{\rangle}
\DeclarePairedDelimiter\norm{\lVert}{\rVert}
\let\div\relax
\DeclareMathOperator{\div}{div}
\date{Compiled \today}
\newcommand{\HOX}[1]{\todo[noline, size=\footnotesize]{#1}}
\providecommand\@dotsep{5}\def\listtodoname{List of Todos}\def\listoftodos{\hypersetup{linkcolor=black}\@starttoc{tdo}\listtodoname\hypersetup{linkcolor=blue}}\makeatother
\definecolor{shaderulecolor}{rgb}{0.651,0.074,0.090}
\newtheorem{proposition}[theorem]{Proposition}
\theoremstyle{definition}
\theoremstyle{remark}
\title[Spacetime FEMs for control problems]{Spacetime finite element methods for control problems subject to the wave equation}
\author[Burman]{Erik Burman}
\address{Department of Mathematics, University College London, Gower Street, London UK, WC1E 6BT.}
\email{e.burman@ucl.ac.uk}
\thanks{EB acknowledges funding by EPSRC grants EP/P01576X/1}
\author[Feizmohammadi]{Ali Feizmohammadi}
\address{Fields institute, 222 College St, Toronto, Canada, M5T 3J1}
\email{afeizmoh@fields.utoronto.ca}
\thanks{AF acknowledges funding by EPSRC grant EP/P01593X/1 and support from the Fields institute for research in mathematical sciences."
}
\author[M\"{u}nch]{Arnaud M\"unch}
\address{Laboratoire de Math\'ematiques Blaise Pascal, Universit\'e Clermont Auvergne, UMR CNRS 6620, Campus des C\'ezeaux,  63177~Aubi\`ere, France}
\email{arnaud.munch@uca.fr}
\thanks{AM acknowledges funding by the French government research program ``Investissements d'Avenir'' through the IDEX-ISITE initiative 16-IDEX-0001 (CAP 20-25)}
\author[Oksanen]{Lauri Oksanen}
\address{Department of Mathematics and Statistics, University of Helsinki, P.O 68, 00014 University of Helsinki, Finland}
\email{lauri.oksanen@helsinki.fi}
\thanks{LO acknowledges funding by EPSRC grants EP/P01593X/1 and EP/R002207/1}
\begin{document}
\begin{abstract}
We consider the null controllability problem for the wave equation, and analyse a stabilized finite element method formulated on a global, unstructured
spacetime mesh. We prove error estimates for the approximate control given by the computational method. The proofs are based on the regularity properties of the control given by the Hilbert Uniqueness Method,
together with the stability
properties of the numerical scheme. Numerical experiments illustrate the results. 
\end{abstract}
\maketitle
\ifoptionfinal{}{
\tableofcontents
}

\section{Introduction}

We consider the classical null controllability problem for the wave equation, both with distributed and boundary control. 
Let $T>0$, and let $\Omega \subset \R^n$, with $n \ge 2$, be a connected
bounded open set with smooth boundary.
We write 
    \begin{align*}
M = (0,T) \times \Omega, \quad \Gamma = (0,T) \times \p \Omega.
    \end{align*}

For a fixed initial state $(u_0, u_1) \in H_0^1(\Omega) \times L^2(\Omega)$, 
the distributed null control problem on $M$ reads: find $\phi \in L^2(M)$ such that the solution $u$ of 
    \begin{align}\label{wave_eq_intro}
\begin{cases}
\Box u = \chi \phi, 
\\
u|_\Gamma =0,
\\
u|_{t=0} = u_0,\ \p_t u|_{t=0} = u_1,
\end{cases}
    \end{align}
satisfies
    \begin{align}\label{null_target}
u|_{t=T} = 0,\quad \p_t u|_{t=T} = 0.
    \end{align}
Here $\Box = \p_t^2 - \Delta$ is the wave operator, and $\chi$ is a cutoff function that localizes the control in a subset of $M$. 
More precisely, we consider a cutoff of the form
    \begin{align*}
\chi(t,x) = \chi_0(t) \chi_1^2(x)
    \end{align*}
where $\chi_0 \in C_0^\infty([0,T])$
and $\chi_1 \in C^\infty(\Omega)$ take values in $[0,1]$.

Our main assumption is that 
\begin{itemize}
\item[(A)] $\chi = 1$
on open $(a,b) \times \omega \subset M$
satisfying the geometric control condition.
\end{itemize}
The geometric control condition
means that every compressed generalized bicharacteristic intersects the set $(a,b) \times \omega$, when projected to $M$.
We refer to \cite{BLRII} for the rather technical definition of a compressed generalized bicharacteristic.
Roughly speaking, all continuous paths on $M$, consisting of lightlike line segments in its interior and reflected on $\Gamma$ according to Snell's law, 
must intersect $(a,b) \times \omega$. However, projections of compressed generalized bicharacteristics may also glide along $\Gamma$ under suitable convexity.

For our main result we assume, furthermore, that $(u_0,u_1) \in H^{k+1}(\Omega)\times H^{k}(\Omega)$ for some $k = 2,3, \dots$, and that the following compatibility conditions of order $k$ are satisfied
    \begin{align}
    \label{C1L}
    \tag{C1}
(\Delta^j u_0)|_{\p \Omega}=0
\quad &\text{for} \quad
j=0,1,\ldots,\floor*{\frac{k}{2}},
\\
\label{C2L}
\tag{C2}
(\Delta^j u_1)|_{\p \Omega}=0
\quad &\text{for} \quad
j=0,1,\ldots,\floor*{\frac{k-1}{2}}.
    \end{align}
Here $\floor{\cdot}$ is the floor function that gives the greatest integer less than or equal to its argument. 
We recall that the compatibility conditions guarantee that, for smooth enough $\phi$, the solution $u$ of (\ref{wave_eq_intro}) is in $H^{k+1}(M)$, see e.g. \cite[Theorem 6, p. 412]{EvansPDEbook}.

Under these assumptions, we show that the stabilized finite element method introduced below gives such an approximation $\phi_h$ of a certain minimum norm solution $\phi$ to the control problem that 
    \begin{align}\label{conv_intro}
\norm{\chi(\phi_h - \phi)}_{L^2(M)} \lesssim h^{q},
    \end{align}
where $h>0$ is the mesh size and $q \le k-1$ is the polynomial order of the finite element space.
The implicit constant in the above inequality is independent of $h$ and the functions $\phi_h$ and $\phi$. This notation is used in the paper when confusion is not likely to arise.
See Theorem \ref{th:res_conv} for the precise formulation.
In this result both $u$ and $\phi$ are assumed to be at least $H^2(M)$-smooth, corresponding to the above constraint $k \ge 2$. 
We prove also a weak convergence result for our method 
assuming only the smoothness $u_1 \in L^2(\Omega)$
in the case that $u_0 = 0$, see Theorem \ref{th_rough}. The 
case with general rough data is left for future work.

Let us now sketch our result in the case of the boundary null control problem of the following form:
given an initial state $(u_0, u_1) \in H_0^1(\Omega) \times L^2(\Omega)$, 
find $\psi \in L^2(\Gamma)$ such that the solution $u$ of 
    \begin{align}\label{wave_eq_intro_bd}
\begin{cases}
\Box u = 0,
\\
u|_\Gamma = \chi \psi,
\\
u|_{t=0} = u_0,\ \p_t u|_{t=0} = u_1,
\end{cases}
    \end{align}
satisfies the final time condition (\ref{null_target}).
Under a geometric control condition analogous to (A), and the above regularity assumptions on the data $(u_0, u_1)$,
we introduce a finite element method that converges as
    \begin{align}\label{conv_intro_bd}
\norm{\chi(\psi_h - \psi)}_{L^2(\Gamma)} \lesssim h^{q-\frac12},
    \end{align}
with notations analogous to (\ref{conv_intro})
and $q \le k+1$.
See Theorem \ref{th:res_conv_bd} for the precise formulation.
Observe that, although there is a loss of order $1/2$ in (\ref{conv_intro_bd}) in comparison to (\ref{conv_intro}), 
if the highest possible polynomial orders are used,
the order in (\ref{conv_intro_bd}) becomes $k+1/2$
versus $k-1$ in (\ref{conv_intro}).
We can also rescale the method in the distributed case to get the order $q-1/2$ for $q \le k$, leading to $k-1/2$ for the highest possible order.

\subsection{Literature }

This work is a contribution to the finite dimensional approximation of null controls for the linear wave equation. The seminal work is due to Glowinski and Lions  
in \cite{Glowinski_Japan_1990} where the search of the control of minimal $L^2$ norm is reduced (using the Fenchel--Rockafellar duality theory) to the unconstrained minimization of the corresponding conjugate functional involving the homogeneous adjoint problem. Minimization of the discrete functional, associated with centered finite difference approximation in time and $P^1$ finite element method in space is discussed at length in \cite{Glowinski_Japan_1990}
and exhibits a lack of convergence of the approximation with respect to the discretization parameter $h$. This is due to spurious 
high frequencies discrete modes which are not exactly controllable uniformly in $h$. 

This pathology can easily be avoided in practice by adding to the conjugate functional a regularized Tikhonov parameter,
but this leads to so called approximate controls, solving the control problem only up to a small remainder term. Several cures aiming to filter out the high frequencies  have been proposed and analyzed, mainly for simple geometries (1d interval, unit square in 2d, etc) with finite differences schemes. The simplest, but artificial, approach is to eliminate the highest eigenmodes of a discrete approximation of the initial condition as discussed in one space dimension in \cite{micu_NM_2002}, and extended in \cite{lissy_roventa_2019}. We mention spectral methods initially developed \cite{bourquin1993} then used in \cite{lebeau_nodet_2010}. We also mention so called bi-grid method (based on the projection of the discrete gradient on a coarse grid) proposed in \cite{Glowinski_Japan_1990} and analyzed in \cite{Loreti_Mehrenberger_2008, ignat_zuazua_2009} leading to convergence results. One may also design more elaborated discrete schemes avoiding spurious modes: we mention \cite{Glowinski_Wheeler_1989} based on a mixed reformulation of the wave equation analyzed later with finite difference schemes in \cite{castro_micu_2006, castro_micu_munch_2008, asch_munch_2009} at the semi-discrete level and then extended in \cite{munch_m2an2005} to a full space-time discrete setting, leading to strong convergent results. 

The above previous works, notably reviewed in
\cite{Zuazua_review_2005,ervedoza_zuazua_survey_2013}, fall within an
approach that can be called ``discretize then control'' as they aim to
control exactly to zero a finite dimensional approximation of the wave
equation. A relaxed controllability approach is analyzed in
\cite{BFO_Sicon2020} using a stabilized finite element method in space
and leading for smooth two and three dimensional geometries to a
strong convergent approximation. The controllability requirement is
imposed via appropriate penalty terms. We also mention
\cite{pedregal_periago_jorge_2008} based on the Russel principle,
extended in \cite{cindea_micu_tucsnak_2011} and
\cite{gunzburger_2006,aranda_pedregal_2014} for least-squares based
method. One the other hand, one may also employ a ``control then
discretize'' procedure, where the optimality system (for instance
associated with the control of minimal $L^2$ norm ) mixing the
boundary condition in time and space and involving the primal and
adjoint state is discretized within a priori a space-time
approximation. The well-posedness of such system is achieved by using
so called global or generalized observability inequalities. Such
approach avoids the numerical pathologies mentioned above and is
notably well-suited for mesh adaptivity. On the other hand, the
numerical analysis, within a conformal approximation is delicate since
it requires to prove inf-sup stablity that is uniform with respect to $h$. We mention \cite{cindea_munch_calcolo2015} where this approach has been introduced within a conformal approximation leading to convergent numerical results for the control of minimal $L^2$ norm. It has been extended in \cite{montaner_munch_2019} where the wave equation is reformulated as a first order system, solved in the one dimensional case with a stabilized formulation allowing to bypass the inf-sup property issue.  We also mention \cite{cindea_efc_munch_2013}
in the 1d case where the optimality system associated to cost involving both the control and the state is reformulated as a space-time elliptic problem of order four, leading to strong convergent result with respect to the discretization parameter.   
The present paper falls into this category and aims, in the spirit of
\cite{BFMO_2021} devoted to the dual data assimilation problem, to
provide some convergent results, including rate of convergence, with
respect to the discrete parameter. We mention a growing interest for
space-time (finite element) methods of approximation for the wave
equation, initially advocated in \cite{hughes_spacetime_1990, Fre93, John93} and more recently in \cite{+2019}, \cite{Mazzieri_2020}, \cite{dumont_2018}, \cite{Wieners2016}, \cite{Steinbach2019}.

\section{Distributed control}

We recall that, assuming (A),
the distributed control problem can be solved by finding $u$ and $\phi$ such that 
    \begin{align}\label{control_eqs}
\begin{cases}
\Box u = \chi \phi, 
\\
u|_{x \in \p \Omega}=0,
\\
u|_{t=0} = u_0,\ \p_t u|_{t=0} = u_1,
\\
u|_{t=T} = 0,\ \p_t u|_{t=T} = 0,
\end{cases}
\qquad
\begin{cases}
\Box \phi = 0, 
\\
\phi|_{x \in \p \Omega}=0.
\end{cases}
    \end{align}
Moreover, if $(u_0,u_1) \in H^{k+1}(\Omega)\times H^{k}(\Omega)$
satisfies the compatibility conditions of order $k$,
then the unique solution $(u,\phi)$ to (\ref{control_eqs}) satisfies 
    \begin{align}\label{phi_init}
\phi|_{t=T} \in H^{k}(\Omega), 
\quad 
\p_t \phi|_{t=T} \in H^{k-1}(\Omega), 
    \end{align}
and this initial data for $\phi$ satisfies the compatibility conditions of order $k-1$, see \cite[Theorem 5.1]{Ervedoza2010}.
It follows that $\phi \in H^k(M)$, and this again implies that $u \in H^{k+1}(M)$.
The convergence proof for our finite element method is based on the fact that the solution of (\ref{control_eqs}) has this regularity.

The control given by $\phi$ can be characterized also as the control with the minimum norm on $M$ with respect to the weighted measure $\chi dt dx$.
The fact that (\ref{control_eqs}) has a unique solution follows from this characterization, however, we give a short independent proof for the convenience of the reader. 

\begin{lemma}\label{lem_uniq}
Suppose that (A) holds.
Let $u, \phi \in L^2(M)$ solve \eqref{control_eqs} with $u_0 = u_1 = 0$.
Then $u = \phi = 0$.
\end{lemma}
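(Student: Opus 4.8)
The plan is to run a duality argument, pairing the inhomogeneous state equation against the homogeneous adjoint solution $\phi$. First I would multiply $\Box u = \chi \phi$ by $\phi$ and integrate over the spacetime cylinder $M$, so that the right-hand side becomes $\int_M \chi \phi^2 \d t \d x$. Integrating by parts to transfer both the time and the space derivatives in $\Box u$ onto $\phi$ yields the bulk term $\int_M u \, \Box \phi \d t \d x$ together with boundary contributions on the lateral boundary $\Gamma$ and on the two time slices $\{t=0\}$ and $\{t=T\}$. The lateral contribution vanishes since $u|_\Gamma = \phi|_\Gamma = 0$; the slice at $t=0$ vanishes by the homogeneous Cauchy data $u_0 = u_1 = 0$; and the slice at $t=T$ vanishes by the target condition \eqref{null_target}. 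Since $\Box \phi = 0$, the bulk term is zero as well, so I am left with the identity
\[
\int_M \chi \, \phi^2 \d t \d x = 0.
\]

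Because $\chi \ge 0$, this forces $\phi = 0$ almost everywhere on $\{\chi > 0\}$, in particular on the set $(a,b) \times \omega$ where $\chi \equiv 1$ by assumption (A). At this point I would invoke the consequence of the geometric control condition: a solution of the homogeneous equation $\Box \phi = 0$, $\phi|_\Gamma = 0$, that vanishes on $(a,b) \times \omega$ must vanish throughout $M$. Quantitatively this is the uniqueness embedded in the observability inequality of Bardos--Lebeau--Rauch \cite{BLRII}, the observation of a solution vanishing identically on this open set being zero. Hence $\phi \equiv 0$ on $M$. With $\phi$ eliminated, the state solves $\Box u = 0$ with $u|_\Gamma = 0$ and zero Cauchy data at $t=0$, and the standard energy-estimate uniqueness for the wave equation gives $u \equiv 0$.

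The genuine obstacle is the very first step: at the stated regularity $u, \phi \in L^2(M)$ the time-slice traces of $u$ and $\p_t u$, and the normal derivatives on $\Gamma$, are not classically defined, so the integration by parts cannot be carried out verbatim. I would resolve this by reading \eqref{control_eqs} in the sense of solutions by transposition, for which the identity $\int_M \chi \phi^2 = 0$ is precisely the defining weak formulation tested against the admissible homogeneous solution $\phi$ itself, so no integration by parts on the rough pair is actually required; the low regularity of $\phi$ in the observability step is then handled within the same transposition framework. Alternatively, one mollifies and passes to the limit, using the hidden-regularity trace estimates for the wave equation to control the boundary terms uniformly. Once the identity is established, the remaining steps are routine.
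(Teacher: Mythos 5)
Your proposal follows essentially the same route as the paper's proof: establish the duality identity $(\chi\phi,\phi)_{L^2(M)}=0$ by carrying out the integration by parts on smooth approximating solutions (the paper first secures trace regularity via partial hypoellipticity and energy estimates, then approximates the initial data of $\phi$ by smooth functions and passes to the limit, which is the rigorous form of your mollification alternative), then invoke the distributed observability estimate to conclude $\phi=0$ everywhere, and finally uniqueness for the forward wave equation to get $u=0$. You correctly identified the trace-regularity issue at $L^2(M)$ as the genuine obstacle and proposed the same resolution the paper implements, so the approaches coincide in substance.
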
 
\begin{proof}
The lateral boundary traces on $(0,T) \times \p \Omega$ are well-defined due to partial hypoellipticity, see Lemma \ref{lem_parthypo} in Appendix \ref{appendix_cont}.
Typical energy estimates, see e.g. \cite{LLT}, give
    \begin{align*}
u \in C(0,T; H^1_0(\Omega)) \cap C^1(0,T; L^2(\Omega)),
    \end{align*}
and Lemma \ref{lem_linfty_ptu} in Appendix \ref{appendix_cont} implies that
    \begin{align*}
\phi \in C(0,T; L^2(\Omega)) \cap C^1(0,T; H^{-1}(\Omega)).
    \end{align*}
In particular, we may parametrize $\phi$
by $\phi|_{t=0}$ and $\p_t \phi|_{t=0}$.
Let $\phi_{j}^0, \phi_{j}^1 \in C_0^\infty(\Omega)$ 
satisfy 
$\phi_{j}^0 \to \phi|_{t=0}$ in $L^2(\Omega)$ 
and
$\phi_{j}^1 \to \p_t \phi|_{t=0}$ in $H^{-1}(\Omega)$.
Write $(u_j, \phi_j)$ for the solution of
    \begin{align*}
\begin{cases}
\Box u = \chi \phi, 
\\
u|_{x \in \p \Omega}=0,
\\
u|_{t=0} = 0,\ \p_t u|_{t=0} = 0,
\end{cases}
\qquad
\begin{cases}
\Box \phi = 0, 
\\
\phi|_{x \in \p \Omega}=0.
\\
\phi|_{t=0} = \phi_j^0,\ \p_t \phi|_{t=0} = \phi_j^1.
\end{cases}
    \end{align*}
Then 
    \begin{align*}
(\chi \phi_j, \phi_j)_{L^2(M)} 
&= 
(\Box u_j, \phi_j)_{L^2(M)} - (u_j, \Box \phi_j)_{L^2(M)}
\\&= (\p_t u_j|_{t=T}, \phi_j|_{t=T})_{L^2(\Omega)}
- (u_j|_{t=T}, \p_t \phi_j|_{t=T})_{H^1_0 \times H^{-1}(\Omega)}.
    \end{align*}
Taking the limit $j \to \infty$ shows that $\phi = 0$ in $\supp(\chi)$.
The distributed observability estimate, see Theorem \ref{th_obs_dist}, implies that $\phi = 0$ in $M$. It follows that also $u = 0$ in $M$.
\end{proof} 

\subsection{Notations}
We write $(t,x) = (x^0, x^1, \dots, x^n)$ for the coordinates on $\R^{1+n}$. Let $g$ stand for the Minkowski metric on $\R^{1+n}$, and
denote by $g(\cdot, \cdot)$ the scalar product with respect to $g$.
The wave operator can be written as $\Box = -\div \grad u$, where the divergence and gradient are defined with respect to $g$. 
Let $K \subset M$ be an open set with piecewise smooth boundary,
and let $N = (N_0, \dots, N_n)$ be the outward pointing unit normal vector field on $\p K$,
defined with respect to the Euclidean metric on $\R^{1+n}$.
We write 
    \begin{align*}
\p_\nu u = N \cdot \grad v = -N_0 \p_{x^0} v + N_1 \p_{x^1} v + \dots + N_n \p_{x^n} v.
    \end{align*}
Note that $\div$ coincides with the Euclidean divergence, and we can apply the Euclidean divergence theorem to obtain
    \begin{align}\label{int_by_parts}
\int_{K} u \Box v \d x = 
\int_K g(du, dv) \d x 
- \int_{\p K} u \p_\nu v \d s,
    \end{align}
where $\mbox{d}s$ is the Euclidean surface measure on $\p K$,
and $d u$ is the spacetime differential of $u$, that is, the covector with the components $\p_{x^j} u$, $j=0, \dots, n$.

\subsection{Discretization}
\label{sec_disc}

Consider a family $\mathcal T = \{\mathcal{T}_h : h > 0\}$ 
where $\mathcal{T}_h$ is a set of $1+n$-dimensional simplices forming a
simplicial complex. 
To keep the discussion as simple as possible, we assume in this section that 
$\bigcup_{K \in \mathcal{T}_h} = M$ for all $h > 0$.
This is a restrictive assumption since we also assumed that the spatial boundary $\p \Omega$ is smooth.  
We will explain later, see Remark \ref{rem_dist_bd}, how this issue can be avoided by allowing the simplices adjacent to the boundary to have curved faces, fitting $\Omega$. This fitting technique is also described in detail in the context of the boundary control problem below. 

If the set $\omega$ in assumption (A) is a neighbourhood of the boundary $\p \Omega$ then the distributed observability estimate in Theorem \ref{th_obs_dist} holds in the case of piecewise smooth $\p \Omega$ and large enough $T>0$. In particular, we can consider polyhedral $\Omega$ and then $\bigcup_{K \in \mathcal{T}_h} = M$ is straightforward to arrange. 
The multiplier method can also be used to derive the distributed observability estimate for polyhedral $\Omega$ and more general observation regions $\omega$, however, this method can not reproduce the sharp geometric control condition in the case of smooth boundary \cite{Miller2002}. 

We assume that the family $\mathcal{T}$ is quasi uniform,
see e.g. \cite[Definition 1.140]{Ern2004}, and
indexed by 
$$
h = \max_{K \in \T} \diam(K).
$$ 
Then we define for $p \in \Np = \{1,2,\dots\}$ the $H^1(M)$-conformal approximation space of polynomial degree $p$,
 \begin{align}
\label{def_Vh0}
V^p_{h} = \{u \in H^1(M) : u|_{\Gamma} = 0,\, u |_K \in \mathbb{P}_p(K)
\text{ for all $K \in \mathcal T_h$} \},
\end{align}
where $\mathbb{P}_p(K)$ denotes the set of polynomials of degree less
than or equal to $p$ on $K$. 
Occasionally we write also $V_h = \bigcup_{p \in \Np} V_h^p$.

For any $h>0$, the control problem (\ref{control_eqs}) can be formulated weakly as 
    \begin{align}\label{weak_form}
a(u,\psi) = h^2 c(\phi, \psi) + L(\psi), \quad a(v,\phi) = 0,
    \end{align}
for all $v,\psi \in C^\infty(M)$ vanishing on $\Gamma$, where
    \begin{align}\label{def_a1}
a(u,\psi) &= \int_{M} g(hdu, hd\psi) \d x 
-h(u, h\p_\nu \psi)_{L^2(\p M \setminus \Gamma)}
\\\notag
L(\psi) &= h(hu_1, \psi|_{t=0})_{L^2(\Omega)} - h(u_0, h\p_t \psi|_{t=0})_{L^2(\Omega)},
    \end{align}
and $c(\phi,\psi) = (\chi \phi, \psi)_{L^2(M)}$.
Indeed, it follows from (\ref{int_by_parts}) that if smooth $(u,\phi)$ solves (\ref{control_eqs})
then (\ref{weak_form}) holds for all smooth $(v,\psi)$ vanishing on $\Gamma$. 

The bilinear form $a$ is scaled so that 
there is $C > 0$ such that for all $h > 0$ and $u, v\in H^2(M) + V_h$ there holds
    \begin{align}\label{bilinear_bound}
a(u,v) \le C \norm{u}_{H^2(\mathcal T_h)}
\norm{v}_{H^2(\mathcal T_h)},
    \end{align}
where the broken semiclassical Sobolev norm is defined for any $k \in \N$ by 
    \begin{align*}
\norm{u}_{H^k(\mathcal T_h)}^2
= 
\sum_{j=0}^k \sum_{K \in \mathcal T_h} \norm{(h D)^j u}_{L^2(K)}^2.
    \end{align*}
Here $D^j u$ is the tensor of order $j$ that gives the $j$th total derivative of $u$. 
The continuity (\ref{bilinear_bound})
is consequence of the following trace inequality, see e.g. \cite[Eq. 10.3.9]{BS08}:
there is $C > 0$ such that for all $h > 0$, $K \in \mathcal T_h$ and $u \in H^1(K)$ there holds
\begin{align}
\label{trace_cont}
h^\frac12 \|u\|_{L^2(\partial K)} &\le C( \|u\|_{L^2(K)} + 
\|h\nabla u\|_{L^2(K)}).
\end{align}

For $u \in H^k(M)$ the broken semiclassical norm $\norm{u}_{H^k(\mathcal T_h)}$ reduces to the usual semiclassical norm defined by
    \begin{align*}
\norm{u}_{H_h^k(M)}^2
= 
\sum_{j=0}^k \norm{(h D)^j u}_{L^2(M)}^2.
    \end{align*}
Moreover, there is $C > 0$ such that for all $h > 0$ and $u \in V_h$ there holds
    \begin{align*}
\norm{u}_{H^k(\mathcal T_h)} \le C \norm{u}_{L^2(M)}.
    \end{align*}
This is due to the discrete inverse inequality, see e.g. \cite[Lemma 1.138]{Ern2004}: 
there is $C > 0$ such that for all $h > 0$, $K \in \mathcal T_h$, $p \in \Np$ and $u \in \mathbb{P}_p(K)$ there holds
    \begin{align}\label{inverse_disc}
\|h\nabla u\|_{L^2(K)} &\le C
\|u\|_{L^2(K)}.
    \end{align}
We will systematically use a scaling so that all the bilinear forms in the paper satisfy the bound (\ref{bilinear_bound}).

Our finite element method has the form:
find the critical point of the Lagrangian
    \begin{align*}
\mathcal L(u,\phi) : V_h^p \times V_h^q \to \R, \quad 
\mathcal L(u,\phi) = 
\frac12 h^2 c(\phi, \phi) + L(\phi)
- \frac12 \mathcal R(u,\phi)
- a(u, \phi),
    \end{align*}
where, writing $U = (u, \p_t u)$ and $U_0 = (u_0, u_1)$,
the regularization is given by
    \begin{align}\label{def_R}
\mathcal R(u,\phi)
&= h^{-\kappa} S(u) - h^\kappa S(\phi) 
+ h^{-\kappa} E(U|_{t=0} - U_0)
+ h^{-\kappa} E(U|_{t=T}) 
\\\notag&\qquad
+ h^{4-\kappa} \widetilde C(\phi) + 2 h^{2-\kappa}\rho(u, \phi),
\\\notag
E(U_0) &= 
h\norm{u_0}_{L^2(\Omega)}^2
+ 
h\norm{h u_1}_{L^2(\Omega)}^2,
\\\notag
S(u) &=
\sum_{K \in \mathcal T_h} \norm{h^2\Box u}_{L^2(K)}^2
+ \sum_{F \in \mathcal F_h} h \norm{\jump{h\p_\nu u}}_{L^2(F)}^2,
\\\notag
\widetilde C(\phi) &= 
\norm{\chi \phi}_{L^2(M)}^2,
\quad
\rho(u,\phi) = - \sum_{K \in \mathcal T_h} (h^2 \Box u, \chi \phi)_{L^2(K)},
    \end{align}


where $\kappa < 2$ is a fixed constant.
We have $\mathcal R(u,\phi) = 0$ for a smooth solution $(u,\phi)$ to 
(\ref{control_eqs}).
Indeed, 
    \begin{align*}
S(u) + 2 h^{2}\rho(u, \phi) + h^{4} \widetilde C(\phi)
= 
\sum_{K \in \mathcal T_h} \norm{h^2(\Box u - \chi \phi)}_{L^2(K)}^2
+ \sum_{F \in \mathcal F_h} h \norm{\jump{h\p_\nu u}}_{L^2(F)}^2
= 0,
    \end{align*}
and also $S(\phi) = 0$ and $E(U|_{t=0} - U_0) = E(U|_{t=T}) = 0$.

The equation $d\mathcal L(u, \phi) = 0$ can be written as 
    \begin{align}\label{fem}
A[(u, \phi), (v, \psi)] = h^{-\kappa} e(U_0, V|_{t=0}) + L(\psi) \quad 
\text{for all $(v,\psi) \in V_h^p \times V_h^q$},
    \end{align}
where the bilinear form $A$ is given by
    \begin{align*}
A[(u, \phi), (v, \psi)] 
&= 
h^{-\kappa}s(u,v) - h^\kappa s(\phi, \psi)
- h^2 c(\phi, \psi) 
+ h^{-\kappa} \sum_{\tau = 0,T} e(U|_{t=\tau}, V|_{t=\tau}) 
\\&\qquad+ a(v, \phi) + a(u, \psi)
\\&\qquad+ h^{4-\kappa} \tilde c(\phi, \psi) + h^{2-\kappa}\rho(v, \phi) + h^{2-\kappa}\rho(u, \psi).
    \end{align*}
Here $s$ is the bilinear form associated to the quadratic form $S$, and this lowercase--uppercase convention is systematically used also for other quadratic and bilinear forms in the paper.
Let us emphasize that all the bilinear forms $s$, $c$, $e$, $\tilde c$ and $\rho$ satisfy the same bound (\ref{bilinear_bound}) as $a$.


We define the residual norm by
    \begin{align}\label{def_tnorm}
\tnorm{(u,\phi)}^2 = h^{-\kappa} S(u) + h^\kappa S(\phi) + h^2 C(\phi) 
+ h^{-\kappa} \sum_{\tau = 0,T} E(U|_{t=\tau}).
    \end{align}

\begin{lemma}\label{lem_tnorm_is_norm}
Suppose that (A) holds. Then
$\tnorm{\cdot}$ is a norm on $V_h \times V_h$.
\end{lemma}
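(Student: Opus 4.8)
The plan is to observe first that $\tnorm{\cdot}$ is automatically a seminorm, so that the entire content of the lemma is the definiteness. Indeed, $\tnorm{(u,\phi)}^2$ is the value $B\big((u,\phi),(u,\phi)\big)$ of the symmetric bilinear form
\[
B\big((u,\phi),(v,\psi)\big) = h^{-\kappa}s(u,v) + h^\kappa s(\phi,\psi) + h^2 \tilde c(\phi,\psi) + h^{-\kappa}\sum_{\tau=0,T} e(U|_{t=\tau},V|_{t=\tau}),
\]
and each summand is a Gram-type form built from $L^2$ inner products, hence positive semidefinite; their sum $B$ is therefore positive semidefinite and symmetric. Consequently $\tnorm{\cdot}$ is nonnegative and homogeneous, and the triangle inequality follows from Cauchy--Schwarz for $B$. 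It thus remains to prove that $\tnorm{(u,\phi)}=0$ forces $(u,\phi)=(0,0)$.

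So I would assume $\tnorm{(u,\phi)}=0$ and use that $\tnorm{(u,\phi)}^2$ is a sum of nonnegative terms, each of which must then vanish. From $S(u)=0$ and $S(\phi)=0$ I read off that $\Box u=0$ and $\Box\phi=0$ on every $K\in\T$, and that $\jump{h\p_\nu u}=0$ and $\jump{h\p_\nu\phi}=0$ on every interior face $F\in\F$. From $C(\phi)=0$ I get $\chi\phi=0$ in $M$, and from $E(U|_{t=0})=E(U|_{t=T})=0$ I get $u|_{t=0}=\p_t u|_{t=0}=0$ and $u|_{t=T}=\p_t u|_{t=T}=0$. Finally, since $u,\phi\in V_h\subset H^1(M)$ are conformal, $u|_\Gamma=\phi|_\Gamma=0$ and $u,\phi$ are continuous across every interior face.

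The technical heart is upgrading the \emph{elementwise} identities $\Box u=0$ on each $K$ to the \emph{global} statement $\Box u=0$ in $\mathcal D'(M)$, and likewise for $\phi$. For $\psi\in C_0^\infty(M)$ I would apply the integration-by-parts identity \eqref{int_by_parts} on each $K$, both with $u$ tested against $\Box\psi$ and with $\psi$ tested against $\Box u$, and subtract, obtaining
\[
\int_M u\,\Box\psi \d x = \sum_{K\in\T}\Big(\int_{\p K}\psi\,\p_\nu u \d s - \int_{\p K}u\,\p_\nu\psi \d s\Big),
\]
where I used $\Box u=0$ on each $K$. On an interior face the terms $u\,\p_\nu\psi$ cancel between the two adjacent simplices because $u$ and $\psi$ are continuous and the two outward conormals are opposite, while the terms $\psi\,\p_\nu u$ sum to $\int_F \psi\,\jump{\p_\nu u}\d s=0$; faces on $\p M$ contribute nothing since $\psi$ is compactly supported in the interior. (The one point needing care is a characteristic, i.e.\ lightlike, face: there the Minkowski conormal is tangential, so $\p_\nu u$ is automatically continuous and $\jump{\p_\nu u}=0$ carries no information, yet the same cancellation still makes the interface contribution vanish.) Hence $\int_M u\,\Box\psi=0$ for all such $\psi$, i.e.\ $\Box u=0$ in $M$, and the identical argument gives $\Box\phi=0$ in $M$. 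This gluing step is the part I expect to be the main obstacle.

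With the global identities in hand the conclusion is immediate from the uniqueness already established: since $\Box u=0$ and $\chi\phi=0$, the pair $(u,\phi)\in L^2(M)\times L^2(M)$ solves \eqref{control_eqs} with $u_0=u_1=0$ (the equation $\Box u=\chi\phi$ holds as both sides vanish, $\Box\phi=0$, the lateral traces vanish, and the Cauchy data at $t=0$ and $t=T$ vanish). Lemma~\ref{lem_uniq}, which relies on assumption (A) through the distributed observability estimate of Theorem~\ref{th_obs_dist}, then yields $u=\phi=0$. This establishes definiteness and completes the proof that $\tnorm{\cdot}$ is a norm on $V_h\times V_h$.
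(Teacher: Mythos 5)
Your proof is correct, and its skeleton matches the paper's: each term in \eqref{def_tnorm} vanishes, the elementwise equations and vanishing jumps are glued into $\Box u = \Box\phi = 0$ in the weak sense, and then assumption (A) enters to force $(u,\phi)=(0,0)$. Two differences are worth recording. First, the gluing step, which the paper dismisses in one sentence (``It follows that $\Box u = 0$ in the weak sense''), is the part you work out in full; your elementwise integration by parts and face-cancellation argument is exactly the missing justification, and your side remark about lightlike faces (where $\jump{\p_\nu u}=0$ holds automatically, so the stabilization term carries no information but none is needed) is a correct and genuinely careful observation. Second, your endgame is heavier than the paper's. The paper treats the two components separately: $u=0$ follows from ordinary uniqueness for the wave equation ($\Box u =0$ weakly, $u|_\Gamma=0$, vanishing Cauchy data at $t=0$), and $\phi=0$ follows directly from the distributed observability estimate (Theorem \ref{th_obs_dist}) once $\chi\phi=0$ is known. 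You instead package $(u,\phi)$ as a zero-data solution of \eqref{control_eqs} and invoke Lemma \ref{lem_uniq}. This is legitimate --- all hypotheses of that lemma are met, since $V_h\subset H^1(M)\subset L^2(M)$ and your pair satisfies every equation in the system --- but it is overkill: the bulk of Lemma \ref{lem_uniq}'s proof is a duality/approximation argument whose sole purpose is to \emph{derive} that $\phi$ vanishes on $\supp(\chi)$, which in your situation is already handed to you by $C(\phi)=0$. The paper's route uses only the two ingredients actually needed. Finally, a cosmetic slip: in your bilinear form $B$ you wrote $h^2\tilde c(\phi,\psi)$, whereas the norm \eqref{def_tnorm} contains $h^2 C(\phi)$, the quadratic form of $c$, i.e.\ $(\chi\phi,\phi)_{L^2(M)}$ rather than $\norm{\chi\phi}_{L^2(M)}^2$; since $0\le\chi\le 1$, both forms are positive semidefinite and vanish exactly when $\chi\phi=0$, so nothing in your argument is affected.
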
 
\begin{proof}
Suppose $\tnorm{(u,\phi)} = 0$.
Then $\Box u = 0$ elementwise and $\jump{\p_\nu u} = 0$
for all internal faces. It follows that $\Box u = 0$ in the weak sense. 
As $u|_\Gamma = 0$ and $E(U|_{t=0}) = 0$, it follows that $u=0$. 
Similarly $\Box \phi = 0$ in the weak sense. As $\phi|_\Gamma = 0$ and $\widetilde C(\phi) = 0$, the distributed observability estimate, see Theorem \ref{th_obs_dist}, implies that $\phi = 0$. 
\end{proof}

\begin{lemma}\label{lem_stability}
For all sufficiently small $h$ and all $u,\phi \in H^2(M) + V_h$ there holds
    \begin{align*}
\tnorm{(u,\phi)}^2 \lesssim A[(u,\phi),(u,-\phi)].
    \end{align*}
\end{lemma}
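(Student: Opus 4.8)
The plan is to turn the statement into a pure identity plus an absorption argument: I would evaluate $A[(u,\phi),(u,-\phi)]$ term by term, observe that this particular choice of test argument makes the symmetric diagonal forms reinforce while the antisymmetric coupling cancels, and then absorb the single indefinite remainder into the residual norm using $\kappa<2$. Note first that on $H^2(M)+V_h$ the broken quantities $S$, $C$, $E$, $\widetilde C$ (and the associated bilinear forms) are all well-defined, since $H^2(M)$ functions have vanishing interelement jumps and the elementwise $\Box$ is meaningful on each piece, so the algebraic manipulations below are legitimate.

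Substituting $(v,\psi)=(u,-\phi)$ and writing $V=(u,\p_t u)=U$, the symmetric diagonal forms double up with the correct signs:
\[
h^{-\kappa}s(u,u)=h^{-\kappa}S(u),\quad -h^\kappa s(\phi,-\phi)=h^\kappa S(\phi),\quad -h^2 c(\phi,-\phi)=h^2 C(\phi),
\]
with $C(\phi)=c(\phi,\phi)=(\chi\phi,\phi)_{L^2(M)}$, and $h^{-\kappa}\sum_{\tau=0,T}e(U|_{t=\tau},U|_{t=\tau})=h^{-\kappa}\sum_{\tau=0,T}E(U|_{t=\tau})$. These four contributions are exactly $\tnorm{(u,\phi)}^2$. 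Next I would check the couplings cancel: by bilinearity the adjoint pair gives $a(u,\phi)+a(u,-\phi)=0$, and similarly $h^{2-\kappa}\rho(u,\phi)+h^{2-\kappa}\rho(u,-\phi)=0$. The only survivor is $h^{4-\kappa}\tilde c(\phi,-\phi)=-h^{4-\kappa}\widetilde C(\phi)$, so that
\[
A[(u,\phi),(u,-\phi)] = \tnorm{(u,\phi)}^2 - h^{4-\kappa}\widetilde C(\phi).
\]

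Finally I would absorb this lone negative term. Since $\chi=\chi_0\chi_1^2$ takes values in $[0,1]$, one has $\chi^2\le\chi$ pointwise, hence
\[
\widetilde C(\phi)=\norm{\chi\phi}_{L^2(M)}^2 \le (\chi\phi,\phi)_{L^2(M)} = C(\phi).
\]
Writing $h^{4-\kappa}=h^{2-\kappa}\cdot h^2$ and using $h^2 C(\phi)\le\tnorm{(u,\phi)}^2$, I obtain $h^{4-\kappa}\widetilde C(\phi)\le h^{2-\kappa}\tnorm{(u,\phi)}^2$. Because $\kappa<2$, the factor $h^{2-\kappa}\to0$, so for $h$ small enough $h^{2-\kappa}\le\tfrac12$, giving $A[(u,\phi),(u,-\phi)]\ge\tfrac12\tnorm{(u,\phi)}^2$, which is the claim.

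There is no genuine obstacle: the sign pattern in the test argument $(u,-\phi)$ is engineered precisely so that the diagonal forms reinforce, the adjoint coupling $a$ and the consistency coupling $\rho$ cancel, and the only indefinite remainder is the lower-order control term, whose coefficient $h^{4-\kappa}$ is a factor $h^{2-\kappa}$ smaller than the $h^2 C(\phi)$ already carried by $\tnorm{\cdot}^2$. The one point that must be verified with care is the inequality $\widetilde C\le C$, which rests on $0\le\chi\le1$ rather than on any property of the discretization.
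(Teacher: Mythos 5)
Your proof is correct and follows exactly the paper's argument: the identity $A[(u,\phi),(u,-\phi)] = \tnorm{(u,\phi)}^2 - h^{4-\kappa}\widetilde C(\phi)$, followed by absorbing the remainder into $h^2 C(\phi)$ using $\chi^2\le\chi$ (from $0\le\chi\le 1$) and $\kappa<2$. The only difference is that you spell out the term-by-term cancellation and the absorption inequality that the paper states without detail.
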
 
\begin{proof}
By the definition of $A$, we have
    \begin{align*}
A[(u, \phi),(u, -\phi)] 
=& \tnorm{(u,\phi)}^2 - h^{4-\kappa} \widetilde C(\phi).
    \end{align*}
As $\kappa < 2$ and $\chi \le 1$, $h^{4-\kappa} \widetilde C(\phi)$ can be absorbed by $h^2 C(\phi)$ for small $h > 0$.
\end{proof}

The previous two lemmas imply that (\ref{fem}) has a unique solution. Indeed, (\ref{fem}) is a square system of linear equations and the lemmas imply that $(u,\phi) = 0$ is the only solution when the right-hand side is zero. The right-hand side being zero is equivalent with $U_0 = 0$. 

\subsection{Error estimates}
Equation (\ref{fem}) defines a finite element method that is 
consistent in the sense that if
smooth enough $u$ and $\phi$ satisfy (\ref{control_eqs}), then (\ref{fem}) holds for $(u,\phi)$. 
This follows from the weak formulation (\ref{weak_form}) of (\ref{control_eqs}) together with the regularization vanishing for $(u,\phi)$.
In particular, if $(u_h, \phi_h) \in V_h^p \times V_h^q$ solves (\ref{fem})
then the following Galerkin orthogonality holds
\begin{equation}\label{eq:gal_ortho}
A[(u - u_h,\phi - \phi_h),(v,\psi)] = 0 \quad \mbox{ for all } (v,\psi)
\in V_h^p \times V_h^q.
\end{equation}

It is straightforward to see that for all $u,\phi,v,\psi \in H^2(M) + V_h$ there holds
    \begin{align}\label{A_cont}
A[(u, \phi), (v, \psi)] 
-( a(v, \phi) + a(u, \psi) ) 
\lesssim 
\tnorm{(u,\phi)} \tnorm{(v,\psi)}.
    \end{align}
We will need the following continuity estimates for $a$.

\begin{lemma}\label{lem_a_cont}
For all $u,\phi,v,\psi \in H^2(M) + V_h$ vanishing on $\Gamma$ there holds
    \begin{align*}
a(v, \phi) & \lesssim S^\frac12(\phi) \norm{v}_{H^1(\mathcal T_h)},
\\
a(u, \psi) & \lesssim \left(S^\frac12(u) + \sum_{\tau = 0,T} E^\frac12(U|_{t=\tau})\right) \norm{\psi}_{H^2(\mathcal T_h)}.
    \end{align*} 
\end{lemma}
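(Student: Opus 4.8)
The plan is to integrate by parts elementwise in the volume term of $a$, using the identity (\ref{int_by_parts}) on each simplex $K \in \mathcal T_h$, and to exploit that all four functions vanish on $\Gamma$. The two bounds differ only in the direction in which the derivatives are moved: to estimate $a(v,\phi)$ I would shift the derivatives onto the second argument $\phi$, whereas to estimate $a(u,\psi)$ I would shift them onto the first argument $u$. Throughout, the key technical tool is the trace inequality (\ref{trace_cont}).

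For the first bound, applying (\ref{int_by_parts}) on each $K$ and summing gives
    \begin{align*}
\int_M g(hdv, hd\phi) \d x = \sum_{K \in \mathcal T_h} (v, h^2 \Box \phi)_{L^2(K)} + \sum_{F \in \mathcal F_h}(v, \jump{h^2\p_\nu\phi})_{L^2(F)} + (v, h^2\p_\nu\phi)_{L^2(\p M \setminus \Gamma)},
    \end{align*}
where the internal-face contributions combine into jumps because $v \in H^1(M)$ is single-valued across faces, and the contributions along $\Gamma$ vanish since $v|_\Gamma = 0$. On $\p M\setminus\Gamma$ the term above equals $h(v, h\p_\nu\phi)_{L^2(\p M\setminus\Gamma)}$, which cancels exactly the explicit boundary term in the definition (\ref{def_a1}) of $a$. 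Hence $a(v,\phi)$ reduces to the volume residual and face-jump terms alone. The volume term is bounded by $\norm{v}_{L^2(M)} S^\frac12(\phi)$ via Cauchy--Schwarz, while each face term, after writing $\jump{h^2\p_\nu\phi} = h\jump{h\p_\nu\phi}$ and using (\ref{trace_cont}) to pass from $\norm{v}_{L^2(F)}$ to $h^{-\frac12}\norm{v}_{H^1(K)}$, is controlled by $h^\frac12\norm{v}_{H^1(K)}\norm{\jump{h\p_\nu\phi}}_{L^2(F)}$. A discrete Cauchy--Schwarz over the faces then yields $\norm{v}_{H^1(\mathcal T_h)} S^\frac12(\phi)$, using that each element meets only boundedly many faces.

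For the second bound I would integrate by parts in the opposite direction, obtaining
    \begin{align*}
\int_M g(hdu, hd\psi) \d x = \sum_{K \in \mathcal T_h}(\psi, h^2\Box u)_{L^2(K)} + \sum_{F\in\mathcal F_h}(\psi, \jump{h^2\p_\nu u})_{L^2(F)} + (\psi, h^2\p_\nu u)_{L^2(\p M\setminus\Gamma)}.
    \end{align*}
Now the boundary term $(\psi, h^2\p_\nu u)_{L^2(\p M\setminus\Gamma)}$ no longer matches $-h(u, h\p_\nu\psi)_{L^2(\p M\setminus\Gamma)}$, so both survive. Since $\p M\setminus\Gamma$ consists of the slices $t = 0$ and $t = T$, where $\p_\nu$ reduces to $\pm\p_t$, I would estimate these using the initial/final energy: for each $\tau \in \{0, T\}$, Cauchy--Schwarz produces bounds of the form $h^\frac12\norm{\psi|_{t=\tau}}_{L^2(\Omega)} E^\frac12(U|_{t=\tau})$ and $\norm{u|_{t=\tau}}_{L^2(\Omega)}\, h\norm{h\p_t\psi|_{t=\tau}}_{L^2(\Omega)}$, where $\norm{u|_{t=\tau}}_{L^2(\Omega)} \le h^{-\frac12}E^\frac12(U|_{t=\tau})$ by the definition of $E$. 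Applying (\ref{trace_cont}) to $\psi$ for the first term and to the first derivative $h\nabla\psi$ for the second bounds the $\psi$-factors by $\norm{\psi}_{H^2(\mathcal T_h)}$, so each boundary term is $\lesssim E^\frac12(U|_{t=\tau})\norm{\psi}_{H^2(\mathcal T_h)}$. The volume and face terms are handled as before, now producing $S^\frac12(u)$ against $\norm{\psi}_{H^2(\mathcal T_h)}$.

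The hard part is not any single estimate but the bookkeeping: tracking which argument carries the derivatives, noting that the $\p M\setminus\Gamma$ boundary term cancels in the first bound yet must be retained and absorbed into the $E$-terms in the second, and matching the powers of $h$ in the trace inequality so that the scalings built into $S$ and $E$ come out exactly. This asymmetry is precisely why the first argument in $a$ receives an $H^1$-factor while the second receives an $H^2$-factor and the boundary energies $E$.
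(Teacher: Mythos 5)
Your proof is correct and takes essentially the same route as the paper: elementwise integration by parts via (\ref{int_by_parts}), with the $\p M \setminus \Gamma$ term cancelling against the explicit boundary term of $a$ in the first bound and surviving in the second, where it is absorbed into the $E$-terms, all followed by Cauchy--Schwarz and the trace inequality (\ref{trace_cont}). The paper's proof records only the two integration-by-parts identities and leaves the remaining estimates implicit; your write-up supplies exactly those details, with the correct powers of $h$ and the correct explanation of why the second argument requires the $H^2(\mathcal T_h)$-norm.
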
  
\begin{proof}
Recalling (\ref{int_by_parts}) we see that 
    \begin{align*}
a(v, \phi)
&= 
\int_{M} g(hdv, hd\phi) \d x 
-h(v, h\p_\nu \phi)_{L^2(\p M \setminus \Gamma)}
\\&=
\sum_{K \in \mathcal T_h}
\int_K v h^2\Box \phi \d x 
+
\sum_{F \in \mathcal F_h}
h \int_F v \jump{h\p_\nu \phi} \d s
    \end{align*}
and the first claimed estimate follows from the Cauchy--Schwarz inequality and the trace inequality (\ref{trace_cont}).

Let us now turn to the second estimate.
We have
    \begin{align*}
a(u, \psi)
&= 
\int_{M} g(hdu, hd\psi) \d x 
-h(u, h\p_\nu \psi)_{L^2(\p M \setminus \Gamma)}
\\&= 
\sum_{K \in \mathcal T_h}
\int_K h^2\Box u \psi \d x 
+
\sum_{F \in \mathcal F_h}
h\int_F \jump{h\p_\nu u} \psi \d s
\\&\qquad
+h(h\p_\nu u, \psi)_{L^2(\p M \setminus \Gamma)}
-h(u, h\p_\nu \psi)_{L^2(\p M \setminus \Gamma)},
    \end{align*}
and the second estimate follows.
\end{proof} 

Let us recall estimates for the Scott--Zhang interpolant
$i_h^p$ taking functions in $H^1(M)$, that vanish on $\Gamma$,
to $V_h^p$, see \cite{SZ90}.
For all $p \in \Np$ and $k =1, \dots, p+1$ there is $C > 0$ such that
for all $h>0$ and $u \in H^k(M)$
    \begin{align}\label{interp}
\norm{u- i_h^p u}_{H^k(\mathcal T_h)}
\le C \norm{(hD)^k u}_{L^2(M)}.
    \end{align}


\begin{theorem}\label{th:res_conv}
Suppose that (A) holds.
Let $\kappa < 2$, $p,q \in \Np$ and let $(u_h, \phi_h)$ in $V^p_h \times V^q_h$ be the solution of \eqref{fem}. Let $u \in H^{p+1}(M)$ and $\phi \in H^{q+1}(M)$ solve \eqref{control_eqs}. Then
\[
\tnorm{(u-u_h,\phi-\phi_h)} 
\lesssim 
h^{p+1-\frac \kappa 2} \|u\|_{H^{p+1}(M)}
+
h^{q+1+\frac \kappa 2} \|\phi\|_{H^{q+1}(M)}.
\]
In particular,
    \begin{align}\label{est_control}
\norm{\chi (\phi-\phi_h)}_{L^2(M)} 
\lesssim  
h^{p-\frac \kappa 2} \|u\|_{H^{p+1}(M)}
+
h^{q+\frac \kappa 2} \|\phi\|_{H^{q+1}(M)}.
    \end{align}
\end{theorem}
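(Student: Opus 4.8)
The plan is to run a Céa-type argument directly in the residual norm $\tnorm{\cdot}$, combining the stability of Lemma~\ref{lem_stability}, the Galerkin orthogonality \eqref{eq:gal_ortho}, and the interpolation bound \eqref{interp}. Write $e_u = u - u_h$, $e_\phi = \phi - \phi_h$, $e = (e_u,e_\phi)$, and denote by $\eta_u = u - i_h^p u$, $\eta_\phi = \phi - i_h^q\phi$ the Scott--Zhang interpolation errors; both interpolants are well defined since $u,\phi$ vanish on $\Gamma$ and lie in $H^{p+1}(M)$, $H^{q+1}(M)$. The decisive choice is to apply the stability estimate to the \emph{full} error $e$ rather than to the discrete part $i_h^p u - u_h$: after invoking Galerkin orthogonality this routes the broken Sobolev norms produced by Lemma~\ref{lem_a_cont} onto the interpolation error, where \eqref{interp} controls them, while the residual-type quantities land on $e$, where they are controlled by $\tnorm{e}$.

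First I would bound $\tnorm{(\eta_u,\eta_\phi)}$ termwise from \eqref{def_tnorm}. The volume residual obeys $\sum_K \norm{h^2\Box\eta_u}_{L^2(K)}^2 \lesssim \sum_K\norm{(hD)^2\eta_u}_{L^2(K)}^2 \le \norm{\eta_u}_{H^{p+1}(\T)}^2$, while the face jumps and the time-slice ($E$) terms are first converted to element-wise broken norms by the trace inequality \eqref{trace_cont} and then bounded by $\norm{\eta_u}_{H^{p+1}(\T)}^2$; the same is done for $\eta_\phi$. Because of the semiclassical scaling, $\norm{\eta_u}_{H^m(\T)} \le \norm{\eta_u}_{H^{p+1}(\T)} \lesssim h^{p+1}\norm{u}_{H^{p+1}(M)}$ for every $m\le p+1$, and likewise $\norm{\eta_\phi}_{H^m(\T)} \lesssim h^{q+1}\norm{\phi}_{H^{q+1}(M)}$ for $m\le q+1$, i.e.\ every derivative order contributes the same power of $h$. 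Collecting the weights $h^{-\kappa/2}$, $h^{\kappa/2}$, $h$ (and using $\kappa<2$ to absorb $h^2C(\eta_\phi)$) gives
\[
\tnorm{(\eta_u,\eta_\phi)} \lesssim h^{p+1-\frac\kappa2}\norm{u}_{H^{p+1}(M)} + h^{q+1+\frac\kappa2}\norm{\phi}_{H^{q+1}(M)}.
\]

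Next I apply Lemma~\ref{lem_stability} to $e$, so that $\tnorm{e}^2 \lesssim A[e,(e_u,-e_\phi)]$. Since $(e_u,-e_\phi) - (\eta_u,-\eta_\phi) = (i_h^p u - u_h,\,-(i_h^q\phi-\phi_h)) \in V_h^p\times V_h^q$, Galerkin orthogonality \eqref{eq:gal_ortho} yields
\[
A[e,(e_u,-e_\phi)] = A[e,(\eta_u,-\eta_\phi)].
\]
Now \eqref{A_cont} bounds the right-hand side by $\tnorm{e}\,\tnorm{(\eta_u,\eta_\phi)}$ up to the two forms $a(\eta_u,e_\phi)$ and $a(e_u,-\eta_\phi)$, which I estimate by Lemma~\ref{lem_a_cont}: writing $e_U=(e_u,\p_t e_u)$, one has $a(\eta_u,e_\phi)\lesssim S^\frac12(e_\phi)\,\norm{\eta_u}_{H^1(\T)} \lesssim (h^{-\kappa/2}\tnorm{e})\,(h^{p+1}\norm{u}_{H^{p+1}(M)})$ and $a(e_u,-\eta_\phi)\lesssim \big(S^\frac12(e_u)+\sum_{\tau=0,T}E^\frac12(e_U|_{t=\tau})\big)\,\norm{\eta_\phi}_{H^2(\T)} \lesssim (h^{\kappa/2}\tnorm{e})\,(h^{q+1}\norm{\phi}_{H^{q+1}(M)})$, using that $\tnorm{e}$ dominates $h^{-\kappa/2}S^\frac12(e_u)$, $h^{\kappa/2}S^\frac12(e_\phi)$ and $h^{-\kappa/2}E^\frac12(e_U|_{t=\tau})$. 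Dividing by $\tnorm{e}$ and combining with the interpolation bound proves the asserted estimate for $\tnorm{(u-u_h,\phi-\phi_h)}$. Finally, as $h^2C(e_\phi)\le\tnorm{e}^2$ and $\norm{\chi e_\phi}_{L^2(M)}=C^\frac12(e_\phi)$, multiplying the residual-norm bound by $h^{-1}$ gives \eqref{est_control}.

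The main obstacle is the treatment of the forms $a(v,\phi)+a(u,\psi)$ that \eqref{A_cont} leaves uncontrolled by $\tnorm{\cdot}$: the residual norm does not bound the broken norms $\norm{\cdot}_{H^1(\T)}$, $\norm{\cdot}_{H^2(\T)}$ appearing in Lemma~\ref{lem_a_cont}, so applying stability to the discrete error (forcing those norms onto a discrete quantity with only an inverse inequality available) would not close the argument. Routing them instead onto the interpolation error through the Galerkin-orthogonality reduction is what makes the estimate work, and checking that the resulting $h$-powers match the two orders $h^{p+1-\kappa/2}$ and $h^{q+1+\kappa/2}$ is the only bookkeeping needing care.
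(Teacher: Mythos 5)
Your proposal is correct and takes essentially the same route as the paper's own proof: stability (Lemma~\ref{lem_stability}) applied to the full error, Galerkin orthogonality \eqref{eq:gal_ortho} to replace the test function by the Scott--Zhang interpolation error, then \eqref{A_cont}, Lemma~\ref{lem_a_cont} and \eqref{interp}, with the paper differing only cosmetically in that it works with the sign-flipped error $(u_h-u,\phi_h-\phi)$ and bounds $\tnorm{(\eta_u,\eta_\phi)}$ by citing \eqref{bilinear_bound} for $s$, $c$, $e$ rather than redoing the trace-inequality bookkeeping. The only (harmless) imprecision is the final step: since $C(e_\phi)=\norm{\chi^{1/2}e_\phi}_{L^2(M)}^2$ and $0\le\chi\le1$, one has $\norm{\chi e_\phi}_{L^2(M)}\le C^{1/2}(e_\phi)$ rather than equality, which still yields \eqref{est_control}.
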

\begin{proof}
We write
    \begin{align}\label{errors}
w = u_h - u, 
\quad 
\eta = \phi_h - \phi, 
\quad
w_h = u_h - i_h^p u,
\quad 
\eta_h = \phi_h - i_h^q \phi.
    \end{align}
By Lemma \ref{lem_stability} and the Galerkin orthogonality (\ref{eq:gal_ortho}),
    \begin{align*}
\tnorm{(w,\eta)}^2 \lesssim
A[(w,\eta),(w,-\eta)]
= A[(w,\eta),(w-w_h,\eta_h-\eta)].
    \end{align*}
We write 
    \begin{align*}
w_i = i_h^p u - u, \quad
\eta_i = i_h^q \phi - \phi.
    \end{align*}
Observing that $w-w_h = w_i$ and $\eta_h - \eta = -\eta_i$,
it follows from (\ref{A_cont}) and Lemma \ref{lem_a_cont}
that
    \begin{align*}
\tnorm{(w,\eta)} &\lesssim
\tnorm{(w_i, \eta_i)} 
+ h^{-\frac \kappa 2}\norm{w_i}_{H^1(\mathcal T_h)}
+ h^{\frac \kappa 2}\norm{\eta_i}_{H^2(\mathcal T_h)}.
    \end{align*}
Recalling the scaling in (\ref{def_tnorm}) and using the bound (\ref{bilinear_bound}), with $a$ replaced by $s$, $c$ and $e$, we see that
    \begin{align*}
\tnorm{(w_i, \eta_i)}
\lesssim 
h^{-\frac \kappa 2}\norm{w_i}_{H^2(\mathcal T_h)}
+ h^{\frac \kappa 2}\norm{\eta_i}_{H^2(\mathcal T_h)}.
    \end{align*}
Finally, using (\ref{interp}),
    \begin{align*}
h^{-\frac \kappa 2}\norm{w_i}_{H^2(\mathcal T_h)}
+ h^{\frac \kappa 2}\norm{\eta_i}_{H^2(\mathcal T_h)}
\lesssim 
h^{p+1-\frac \kappa 2} \|u\|_{H^{p+1}(M)}
+
h^{q+1+\frac \kappa 2} \|\phi\|_{H^{q+1}(M)}.
    \end{align*}
\end{proof}

Recall that if $(u_0,u_1) \in H^{k+1}(\Omega)\times H^{k}(\Omega)$
satisfies the compatibility conditions of order $k$,
then the unique solution $(u,\phi)$ to (\ref{control_eqs}) is in $H^{k+1}(M) \times H^k(M)$.
Hence we can take $p \le k$ and $q \le k-1$. Choosing $\kappa = 0$ and $p=q \le k-1$ leads to the convergence rate (\ref{conv_intro}) stated in the introduction.

Under the assumptions of Theorem \ref{th:res_conv}, it is possible to show that 
    \begin{align*}
\norm{u - u_h}_{L^\infty(0,T; L^2(\Omega))}
+ 
\norm{\p_t(u - u_h)}_{L^2(0,T; H^{-1}(\Omega))}
\lesssim  
h^p \|u\|_{H^{p+1}(M)}+h^q\|\phi\|_{H^{q+1}(M)}. \end{align*}
We do not detail the proof here, but refer to \cite[Theorem 4.4]{BFMO_2021} for a
similar analysis.
The weak norms reflect the fact that the forward problem does not
enjoy the classical energy stability of the wave equation. Instead
error estimates are derived using continuum estimates on a level
dictated by the regularity of $\Box(u-u_h)$. This quantity is in
$H^{-1}(M)$, and not likely in a better space, resulting in the above estimate. 
Continuum theory at this energy level is reviewed in an appendix
below, see Remark \ref{rem_opt_rough} in particular. 
\begin{remark}
Observe that the corresponding stability estimates for unique
continuation given in \cite[Theorem
2.2]{BFL20}, \cite[Theorem 1.1]{BFMO_2021} are inaccurate, claiming
control of $\|\partial_t u\|_{L^\infty(0,T;H^{-1}(\Omega))}$ when the
best quantity that can be controlled (as shown in appendix below,
Theorem \ref{th_obs_dist}, Remark \ref{rem_obs_variants} and
Proposition \ref{prop_energy}) is
$\|\partial_t u\|_{L^2(0,T;H^{-1}(\Omega))}+\|\partial_t
u\vert_{t=0}\|_{H^{-1}(\Omega)}+\|\partial_t
u\vert_{t=T}\|_{H^{-1}(\Omega)}$. The results in the above references
are nevertheless correct without further modifications after
correction of the stability norm for the error analysis.
\end{remark}

However, we obtain a better approximation simply by solving
    \begin{align}\label{wave_direct}
\begin{cases}
\Box u = f, 
\\
u|_{x \in \p \Omega}=0,
\\
u|_{t=T} = 0,\ \p_t u|_{t=T} = 0,
\end{cases}
    \end{align}
with $f = \chi \phi_h$.
We will detail the arguments in an abstract setting
below.

Let $\Box_h$ denote a stable discrete wave operator with vanishing initial and boundary conditions such that the
following standard stability estimate holds for the solution $u_h$ to
$\Box_h u_h = f$,
\[
\tnorm{u_h}_E := 
\norm{u_h}_{L^\infty(0,T; H^1(\Omega))}
+ \norm{\p_t u_h}_{L^\infty(0,T; L^2(\Omega))}
\lesssim \|f\|_{L^2(M)}.
\]
We also assume that the following optimal error estimate holds: if  $u$ is the
solution to (\ref{wave_direct}), then there holds
\[
\tnorm{ u - u_h}_E  \lesssim h^p.
\]
For a high order scheme satisfying these assumptions see for instance
\cite{FP96}.
Let now $u$ be the solution to (\ref{wave_direct}) with $f=\chi\phi$, $v_h$ the
solution to $\Box_h v_h = \chi \phi$ and $u_h$ the solution to $\Box u_h =
\chi \phi_h$. It then follows by the above inequalities that 
\begin{align*}
\tnorm{ u - u_h}_E \leq \tnorm{ u - v_h}_E + \tnorm{ v_h - u_h}_E
\lesssim h^p + \|\chi (\phi - \phi_h)\|_{L^2(M)} \lesssim h^p + h^q.
\end{align*}
Here we used the properties of the method $\Box_h$ and Theorem
\ref{th:res_conv}.

\section{Distributed control with limited regularity}

In this section we will study the finite element method (\ref{fem})
in the case that the continuum solution $(u,\phi)$ to the control problem (\ref{control_eqs}) is in the natural energy class $H^1(M) \times L^2(M)$. We make the standing assumption that (A) holds, so that \eqref{fem} has a unique solution.  

\begin{lemma}\label{lem_rough_box}
Let $(u_h, \phi_h) \in V_h^p \times V_h^q$ be the solution 
of \eqref{fem} with $\kappa = 0$.
Then
    \begin{align*}
h^{-1}\norm{h^2\Box \phi_h}_{H^{-1}(M)}
&\lesssim 
h^\frac12\norm{hu_1}_{L^2(\Omega)} + 
\tnorm{(u_h, \phi_h)},
\\
h^{-1}\norm{h^2\Box u_h}_{H^{-1}(M)}
&\lesssim 
h^\frac12\norm{u_0}_{L^2(\Omega)} + 
\tnorm{(u_h, \phi_h)}.
    \end{align*}
 
\end{lemma}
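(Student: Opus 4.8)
The plan is to argue by duality. Interpreting $H^{-1}(M)$ as the dual of $H^1_0(M)$ with the (non-semiclassical) norm $\norm{\cdot}_{H^1(M)}$, it suffices to bound $\langle h^2 \Box \phi_h, v\rangle$ for arbitrary $v \in H^1_0(M)$ and divide by $\norm{v}_{H^1(M)}$. For such $v$ the integration by parts identity (\ref{int_by_parts}), together with $v|_{\p M}=0$, gives $\langle h^2\Box\phi_h, v\rangle = \int_M g(hdv, hd\phi_h)\d x = a(v,\phi_h)$, so the goal becomes $h^{-1} a(v,\phi_h) \lesssim (h^{\frac12}\norm{hu_1}_{L^2(\Omega)} + \tnorm{(u_h,\phi_h)})\,\norm{v}_{H^1(M)}$. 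I would split $a(v,\phi_h) = a(v - v_h, \phi_h) + a(v_h,\phi_h)$, where $v_h = i_h^p v \in V^p_h$ is a Scott--Zhang type interpolant chosen to vanish on \emph{all} of $\p M$, not merely on $\Gamma$; this is possible because $v$ itself does, and the estimate (\ref{interp}) is retained. The interpolation-error part is then controlled directly by Lemma \ref{lem_a_cont}, while the discrete part $a(v_h,\phi_h)$ is eliminated by testing the finite element equation (\ref{fem}) with the pair $(v_h, 0)$.

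For the first part, the initial estimate of Lemma \ref{lem_a_cont} and the case $k=1$ of (\ref{interp}) give $a(v-v_h,\phi_h) \lesssim S^{\frac12}(\phi_h)\,\norm{v-v_h}_{H^1(\mathcal T_h)} \lesssim \tnorm{(u_h,\phi_h)}\, h\norm{v}_{H^1(M)}$, the decisive factor $h$ coming from $\norm{v-v_h}_{H^1(\mathcal T_h)} \lesssim \norm{hDv}_{L^2(M)}$ and $S^{\frac12}(\phi_h)\le\tnorm{(u_h,\phi_h)}$ (recall $\kappa=0$). Substituting $(v_h,0)$ into (\ref{fem}) and isolating $a(v_h,\phi_h)$ expresses it through $s(u_h,v_h)$, $h^2\rho(v_h,\phi_h)$, and the boundary forms $e(U_0 - U_h|_{t=0}, V_h|_{t=0})$ and $e(U_h|_{t=T}, V_h|_{t=T})$. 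I would estimate each by Cauchy--Schwarz, pairing a quadratic form in $(u_h,\phi_h)$ (all dominated by $\tnorm{(u_h,\phi_h)}$) against a factor carried by $v_h$. The key point is that, since $v\in H^1(M)$, the quantity $\norm{Dv_h}_{L^2(M)}$ is $O(1)$ rather than $O(h^{-1})$, so a single use of the inverse inequality (\ref{inverse_disc}) produces the gains $S^{\frac12}(v_h) \lesssim h\norm{v}_{H^1(M)}$ and, via the trace inequality (\ref{trace_cont}), $E^{\frac12}(V_h|_{t=\tau}) \lesssim h\norm{v}_{H^1(M)}$ for $\tau=0,T$ (here $v_h|_{t=\tau}=0$, so only the time-derivative trace $h\p_t v_h|_{t=\tau}$ contributes). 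Against the prefactor $h^{-1}$, every such term collapses to a clean $\tnorm{(u_h,\phi_h)}\norm{v}_{H^1(M)}$.

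The sole contribution not absorbed into $\tnorm{(u_h,\phi_h)}$ comes from the data inside $e(U_0 - U_h|_{t=0}, V_h|_{t=0})$. Its component paired with $u_0$ is $h(u_0, v_h|_{t=0})_{L^2(\Omega)}$, which vanishes exactly because $v_h|_{t=0}=0$ — this is precisely why I insist that the interpolant vanish on the caps. The remaining component $h^3(u_1 - \p_t u_h|_{t=0}, \p_t v_h|_{t=0})_{L^2(\Omega)}$ splits into a piece absorbed by $E^{\frac12}(U_h|_{t=0})\le\tnorm{(u_h,\phi_h)}$ and the genuine data piece $h^3(u_1, \p_t v_h|_{t=0})_{L^2(\Omega)}$, which by $h^{\frac12}\norm{h\p_t v_h|_{t=0}}_{L^2(\Omega)}\lesssim h\norm{v}_{H^1(M)}$ yields exactly the advertised $h^{\frac12}\norm{hu_1}_{L^2(\Omega)}$. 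The second estimate follows the same scheme, now testing (\ref{fem}) with $(0,\psi_h)$ for $\psi_h = i_h^q\psi$ vanishing on $\p M$, using $\langle h^2\Box u_h,\psi\rangle = a(\psi, u_h)$ and the second estimate of Lemma \ref{lem_a_cont}. One extra boundary term $h(u_h, h\p_\nu\psi_h)_{L^2(\p M\setminus\Gamma)}$ appears from the asymmetry of $a$; since $\psi_h|_{\p M}=0$ it is the only surviving surface contribution and is controlled by the energy traces $E^{\frac12}(U_h|_{t=\tau})\le\tnorm{(u_h,\phi_h)}$. This time the data term surviving in $L(\psi_h)$ is $-h^2(u_0, \p_t\psi_h|_{t=0})_{L^2(\Omega)}$ (the $u_1$ part dies since $\psi_h|_{t=0}=0$), producing the $h^{\frac12}\norm{u_0}_{L^2(\Omega)}$ on the right.

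The main obstacle is the bookkeeping of powers of $h$: a naive bound using only Lemma \ref{lem_a_cont} or the generic continuity (\ref{bilinear_bound}) loses a full factor $h^{-1}$ and fails. The estimate survives only because the $H^1$-regularity of the test function makes its interpolant behave one inverse-inequality order better than a generic element of $V_h$, and because forcing the interpolant to vanish on the whole of $\p M$ cleanly separates the two initial-data traces, so that only $hu_1$ (respectively $u_0$) appears. The two delicate points are therefore confirming that such a boundary-conforming interpolant exists with the estimate (\ref{interp}), and verifying that the boundary-asymmetry term in the second estimate is indeed of lower order.
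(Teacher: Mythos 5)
You follow essentially the same route as the paper: duality against $v \in H^1_0(M)$, a Scott--Zhang interpolant chosen to vanish on all of $\p M$ (a point the paper uses only implicitly and you rightly make explicit), a splitting into an interpolation-error part and a discrete part eliminated by testing \eqref{fem} with $(v_h,0)$, respectively $(0,\psi_h)$, and the bookkeeping that leaves only the data terms $h^{\frac12}\norm{hu_1}_{L^2(\Omega)}$ and $h^{\frac12}\norm{u_0}_{L^2(\Omega)}$. Your treatment of the first inequality is complete and matches the paper's proof step by step.

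One step of the second inequality is wrong as written: you propose to control the interpolation part using the \emph{second} estimate of Lemma \ref{lem_a_cont}. That estimate charges the second argument in the $\norm{\cdot}_{H^2(\mathcal T_h)}$ norm, and neither of the two possible applications is admissible here: $\norm{\psi-\psi_h}_{H^2(\mathcal T_h)}$ is not finite for a test function $\psi$ that is merely in $H^1_0(M)$, while $\norm{u_h}_{H^2(\mathcal T_h)}$ is controlled (via the inverse inequality) only by $\norm{u_h}_{L^2(M)}$, which is \emph{not} dominated by $\tnorm{(u_h,\phi_h)}$ --- indeed, bounding $\norm{u_h}_{L^2(M)}$ is precisely the purpose of the subsequent Lemma \ref{lem_rough_bounded}, which relies on the present lemma. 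The repair is the one you already used for $\phi_h$: bound the interpolation part by the \emph{first} estimate of Lemma \ref{lem_a_cont} with the rough function in the first slot,
\begin{align*}
a(\psi-\psi_h, u_h) \lesssim S^{\frac12}(u_h)\,\norm{\psi-\psi_h}_{H^1(\mathcal T_h)} \lesssim \tnorm{(u_h,\phi_h)}\, h \norm{\psi}_{H^1(M)},
\end{align*}
which needs only $\psi \in H^1$ and the vanishing of $\psi-\psi_h$ on $\p M$; this is exactly what the paper does (``Analogously with the above''). With that substitution, the rest of your second bound --- the asymmetry term $h(u_h, h\p_\nu\psi_h)_{L^2(\p M\setminus\Gamma)}$ absorbed by the energy traces $E^{\frac12}(U_h|_{t=\tau})$, and the surviving data term $-h^2(u_0,\p_t\psi_h|_{t=0})_{L^2(\Omega)}$ --- is correct and agrees with the paper.
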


\begin{proof}
To establish the first claimed inequality, we will show for $v \in H_0^1(M)$
that
    \begin{align*}
\int_M g(hdv,hd\phi_h) \d x
\lesssim
(h^\frac12 \norm{hu_1}_{L^2(\Omega)} + \tnorm{(u_h, \phi_h)}) \norm{h \nabla v}_{L^2(M)}.
    \end{align*}
We have
    \begin{align*}
\int_M g(hdv,hd\phi_h) \d x
= 
\sum_{K \in \mathcal T_h}
\int_K v h^2\Box \phi_h \d x 
+
\sum_{F \in \mathcal F_h}
h \int_F v \jump{h\p_\nu \phi_h} \d s.
    \end{align*}
Let $v_h \in V_h^p$ be the Scott--Zhang interpolant of $v$,
and apply the above equation with $v$ replaced by $v - v_h$.
Then
    \begin{align*}
\int_M g(hd(v-v_h),hd\phi_h) \d x
\lesssim
\tnorm{(0, \phi_h)} \norm{h \nabla v}_{L^2(M)}.
    \end{align*}
Moreover, using (\ref{fem})
    \begin{align*}
&-\int_M g(hdv_h,hd\phi_h) \d x
= 
h^{-\kappa} s(u_h,v_h) 
+
h^{-\kappa} \sum_{\tau = 0,T} h (h \p_t u_h|_{t=\tau}, h\p_t v_h|_{t=\tau})_{L^2(\Omega)} 
\\&\qquad+ 
h^{2-\kappa}\rho(v_h, \phi_h)
- h^{-\kappa} h (hu_1, h\p_t v_h|_{t=0})_{L^2(\Omega)}.
    \end{align*}
Hence, using $\kappa = 0$,
    \begin{align*}
\int_M g(hdv_h,hd\phi_h) \d x
\lesssim 
(h^\frac12 \norm{hu_1}_{L^2(\Omega)} + \tnorm{(u_h, \phi_h)})
\norm{h \nabla v_h}_{L^2(M)}.
    \end{align*}
 
Let us now show for $\psi \in H_0^1(M)$
    \begin{align*}
\int_M g(hdu_h,hd\psi) \d x
\lesssim
(h^\frac12\norm{u_0}_{L^2(\Omega)} + 
\tnorm{(u_h, \phi_h)}) h\norm{\psi}_{H^1(M)}.
    \end{align*}
Let $\psi_h \in V_h^p$ be the Scott--Zhang interpolant of $\psi$.
Analogously with the above, we have
    \begin{align*}
\int_M g(hdu_h,hd(\psi-\psi_h)) \d x
\lesssim
\tnorm{(u_h, 0)} \norm{h \nabla \psi}_{L^2(M)}.
    \end{align*}
Moreover,
    \begin{align*}
&-\int_M g(hdu_h,hd\psi_h) \d x
=
h(u_h, h\p_\nu \psi_h)_{L^2(\p M \setminus \Gamma)}
- h^\kappa s(\phi_h, \psi_h)
- h^2 c(\phi_h, \psi_h) 
\\&\qquad
+ h^{4-\kappa} \tilde c(\phi_h, \psi_h) 
+ h^{2-\kappa}\rho(u_h, \psi_h) 
+ h(u_0, h\p_t \psi_h|_{t=0})_{L^2(\Omega)},
    \end{align*}
and the second claimed inequality follows. 
\end{proof} 

\begin{lemma}\label{lem_rough_tnorm}
Let $(u_h, \phi_h)$ be the solution of \eqref{fem} with $\kappa = 0$.
Then 
    \begin{align}\label{rough_tnorm}
\tnorm{(u_h, \phi_h)} \lesssim h^\frac12 \norm{u_0}_{L^2(\Omega)} + h \norm{u_0}_{H^1_0(\Omega)} + h \norm{u_1}_{L^2(\Omega)}.
    \end{align}
\end{lemma}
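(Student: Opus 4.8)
The plan is to combine the stability bound of Lemma \ref{lem_stability} with the finite element equation \eqref{fem}, and then to absorb the resulting boundary terms using a continuum observability estimate applied to the approximate solution $\phi_h$. Concretely, I would first apply Lemma \ref{lem_stability} with $(u,\phi) = (u_h,\phi_h)$ and insert the test pair $(v,\psi) = (u_h,-\phi_h)$ into \eqref{fem}. Since $\kappa = 0$ this yields
\[
\tnorm{(u_h,\phi_h)}^2 \lesssim A[(u_h,\phi_h),(u_h,-\phi_h)] = e(U_0, U_h|_{t=0}) - L(\phi_h),
\]
which expands into four pairings on the slice $\{t=0\}$: two involving the traces $u_h|_{t=0}$ and $\p_t u_h|_{t=0}$, and two involving $\phi_h|_{t=0}$ and $\p_t \phi_h|_{t=0}$.

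The two $u_h$-terms are immediate. By Cauchy--Schwarz they are controlled by $E^{\frac12}(U_h|_{t=0})$, hence by $\tnorm{(u_h,\phi_h)}$, and pulling out the data produces the admissible factors $h^{\frac12}\norm{u_0}_{L^2(\Omega)}$ and $h^{\frac32}\norm{u_1}_{L^2(\Omega)}$. The remaining two contributions, of the form $h^2(u_1,\phi_h|_{t=0})_{L^2(\Omega)}$ and $h^2(u_0,\p_t\phi_h|_{t=0})$, are the crux of the argument: the residual norm \eqref{def_tnorm} contains no $E$-contribution for $\phi_h$, so it does not see these traces directly.

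To control them I would regard $\phi_h$ as an approximate wave solution. It vanishes on $\Gamma$, and its distributional $\Box\phi_h$ (incorporating the face-jump contributions $\jump{\p_\nu\phi_h}$) is bounded in $H^{-1}(M)$ by Lemma \ref{lem_rough_box}. The continuum observability estimate in the natural energy class (Theorem \ref{th_obs_dist}, combined with the energy estimate of Proposition \ref{prop_energy} to accommodate the source, cf.\ Remark \ref{rem_obs_variants}) then gives
\[
\norm{\phi_h|_{t=0}}_{L^2(\Omega)} + \norm{\p_t\phi_h|_{t=0}}_{H^{-1}(\Omega)} \lesssim \norm{\chi\phi_h}_{L^2(M)} + \norm{\Box\phi_h}_{H^{-1}(M)}.
\]
Here $\norm{\chi\phi_h}_{L^2(M)} = C^{\frac12}(\phi_h) \le h^{-1}\tnorm{(u_h,\phi_h)}$ by definition of the residual norm, while Lemma \ref{lem_rough_box} gives $\norm{\Box\phi_h}_{H^{-1}(M)} \lesssim h^{\frac12}\norm{u_1}_{L^2(\Omega)} + h^{-1}\tnorm{(u_h,\phi_h)}$. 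Substituting these into the two $\phi_h$-terms, the explicit $h^2$ weights convert the $h^{-1}$ losses into the admissible rates $h\norm{u_1}_{L^2(\Omega)}$ and $h\norm{u_0}_{H^1_0(\Omega)}$, each carrying one factor of $\tnorm{(u_h,\phi_h)}$ that is absorbed by Young's inequality; the cross terms such as $h^{\frac52}\norm{u_1}_{L^2(\Omega)}^2$ are of higher order in $h$ and harmless. Collecting all four contributions and taking a square root yields \eqref{rough_tnorm}.

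The main obstacle is this middle step: transferring the continuum observability estimate to the discrete function $\phi_h$, which solves the wave equation only approximately. One must make precise in what sense $\phi_h$ satisfies a wave equation with an $H^{-1}(M)$ source, confirm that the source-term version of observability is available at the rough $L^2\times H^{-1}$ energy level rather than only at $H^1_0\times L^2$, and verify that the several negative powers of $h$ arising from the residual norm and from Lemma \ref{lem_rough_box} combine with the $h^2$ weights in the data pairings to give exactly the rates in \eqref{rough_tnorm} and not something weaker.
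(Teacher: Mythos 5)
Your proposal follows essentially the same route as the paper's proof: Lemma \ref{lem_stability} plus the choice $(v,\psi)=(u_h,-\phi_h)$ in \eqref{fem} to reduce to $e(U_0,U_h|_{t=0}) - L(\phi_h)$, Cauchy--Schwarz for the $u_h$-trace terms, and the distributed observability estimate of Theorem \ref{th_obs_dist} applied to $\phi_h$ together with Lemma \ref{lem_rough_box} and the bound $h C^{\frac12}(\phi_h) \lesssim \tnorm{(0,\phi_h)}$ to control the $\phi_h$-traces, finishing with absorption of $\tnorm{(u_h,\phi_h)}$. The ``obstacle'' you flag at the end is in fact already resolved: Theorem \ref{th_obs_dist} is stated for arbitrary functions with $\Box\phi \in H^{-1}(M)$ in the distributional sense (not only exact wave solutions), so it applies directly to $\phi_h \in V_h^q \subset H^1(M)$ with $\phi_h|_\Gamma = 0$, exactly as in the paper.
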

\begin{proof}
By Lemma \ref{lem_stability} there holds
    \begin{align*}
\tnorm{(u_h,\phi_h)}^2 
&\lesssim 
A[(u_h,\phi_h),(u_h,-\phi_h)]
= 
e(U_0, U_h|_{t=0}) - L(\phi_h)
\\&\le 
h^\frac12 (\norm{u_0}_{L^2(\Omega)} + \norm{hu_1}_{L^2(\Omega)})
E^\frac12(U_h|_{t=0})
\\&\qquad+ 
h^2 (\norm{u_0}_{H^1_0(\Omega)} + \norm{u_1}_{L^2(\Omega)})
(\norm{\phi_h|_{t=0}}_{L^2(\Omega)} + \norm{\p_t \phi_h|_{t=0}}_{H^{-1}(\Omega)}).
    \end{align*}
By the distributed observability estimate, see Theorem \ref{th_obs_dist} in Appendix \ref{appendix_cont}, 
    \begin{align*}
&\norm{\phi_h|_{t=0}}_{L^2(\Omega)}
+
\norm{\p_t \phi_h|_{t=0}}_{H^{-1}(\Omega)}
\lesssim
C^\frac12(\phi_h) +
\norm{\Box \phi_h}_{H^{-1}(M)}.
    \end{align*}
Recalling that $h C^\frac12(\phi_h) \lesssim \tnorm{(0,\phi_h)}$,
and using Lemma \ref{lem_rough_box}, we obtain
    \begin{align*}
& h(\norm{\phi|_{t=0}}_{L^2(\Omega)}
+
\norm{\p_t \phi|_{t=0}}_{H^{-1}(\Omega)})
\lesssim
h^\frac12\norm{hu_1}_{L^2(\Omega)} + 
\tnorm{(u_h, \phi_h)}.
    \end{align*}
As also $E^\frac12(U_h|_{t=0}) \le \tnorm{(u_h, \phi_h)}
$, we have 
    \begin{align*}
&\tnorm{(u_h,\phi_h)}^2 
\\&\quad\lesssim 
(h^\frac12 \norm{u_0}_{L^2(\Omega)} + h \norm{u_0}_{H^1_0(\Omega)} + h \norm{u_1}_{L^2(\Omega)})(h^\frac12\norm{hu_1}_{L^2(\Omega)} + 
\tnorm{(u_h, \phi_h)}),
    \end{align*}
leading to (\ref{rough_tnorm}).   
\end{proof} 

\begin{lemma}\label{lem_rough_bounded}
Let $(u_h, \phi_h)$ be the solution of \eqref{fem} with $\kappa = 0$
and $u_0 = 0$. Then
    \begin{align*}
\norm{u_h}_{L^2(M)} + \norm{\phi_h}_{L^2(M)} \lesssim \norm{u_1}_{L^2(\Omega)}.
    \end{align*}
\end{lemma}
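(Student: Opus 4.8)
The plan is to upgrade the $O(h)$ control on the residual norm coming from Lemma~\ref{lem_rough_tnorm} to an $h$-independent $L^2(M)$ bound, by feeding everything that the residual norm and Lemma~\ref{lem_rough_box} carry into the continuum energy estimate of Proposition~\ref{prop_energy}. Throughout we have $\kappa = 0$ and $u_0 = 0$, so Lemma~\ref{lem_rough_tnorm} reads $\tnorm{(u_h,\phi_h)} \lesssim h\norm{u_1}_{L^2(\Omega)}$, and every quantity below will be shown to be $\lesssim \norm{u_1}_{L^2(\Omega)}$ with no surviving power of $h$.

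First I would read off the data bounds hidden in $\tnorm{(u_h,\phi_h)}$. Since $\tnorm{(u_h,\phi_h)}^2 \ge E(U_h|_{t=\tau})$ and $E(U_h|_{t=\tau}) = h\norm{u_h|_{t=\tau}}_{L^2(\Omega)}^2 + h\norm{h\p_t u_h|_{t=\tau}}_{L^2(\Omega)}^2$, the state has well-controlled endpoint \emph{values},
\[
\norm{u_h|_{t=0}}_{L^2(\Omega)} + \norm{u_h|_{t=T}}_{L^2(\Omega)} \lesssim h^{-\frac12}\tnorm{(u_h,\phi_h)} \lesssim h^{\frac12}\norm{u_1}_{L^2(\Omega)},
\]
while $hC^\frac12(\phi_h) \lesssim \tnorm{(0,\phi_h)} \le \tnorm{(u_h,\phi_h)}$ gives $C^\frac12(\phi_h) \lesssim \norm{u_1}_{L^2(\Omega)}$. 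Next, Lemma~\ref{lem_rough_box} with $u_0 = 0$ yields $h\norm{\Box u_h}_{H^{-1}(M)} \lesssim \tnorm{(u_h,\phi_h)}$ and $h\norm{\Box \phi_h}_{H^{-1}(M)} \lesssim h^{\frac12}\norm{hu_1}_{L^2(\Omega)} + \tnorm{(u_h,\phi_h)}$, whence
\[
\norm{\Box u_h}_{H^{-1}(M)} + \norm{\Box \phi_h}_{H^{-1}(M)} \lesssim \norm{u_1}_{L^2(\Omega)}.
\]
Finally, applying the distributed observability estimate (Theorem~\ref{th_obs_dist}) and its time-reversed variant (Remark~\ref{rem_obs_variants}) to $\phi_h$ bounds its endpoint values,
\[
\norm{\phi_h|_{t=0}}_{L^2(\Omega)} + \norm{\phi_h|_{t=T}}_{L^2(\Omega)} \lesssim C^\frac12(\phi_h) + \norm{\Box \phi_h}_{H^{-1}(M)} \lesssim \norm{u_1}_{L^2(\Omega)}.
\]
I pick up the $t=T$ endpoint here because $\tnorm{\cdot}$ controls $\phi_h$ only through the interior observation $C(\phi_h)$ and the elementwise residual $S(\phi_h)$, never through its endpoint values.

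With these ingredients I would apply Proposition~\ref{prop_energy} separately to $v = u_h$ and $v = \phi_h$. Each is an $H^1(M)$ function vanishing on $\Gamma$ whose distributional $\Box v$, incorporating the interelement jumps $\jump{h\p_\nu v}$ exactly as in Lemma~\ref{lem_rough_box}, lies in $H^{-1}(M)$. The global estimate then controls
\[
\norm{v}_{L^\infty(0,T;L^2(\Omega))} \lesssim \norm{v|_{t=0}}_{L^2(\Omega)} + \norm{v|_{t=T}}_{L^2(\Omega)} + \norm{\Box v}_{H^{-1}(M)},
\]
and the three bounds above make each right-hand side $\lesssim \norm{u_1}_{L^2(\Omega)}$. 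Since $\norm{v}_{L^2(M)} \le T^\frac12 \norm{v}_{L^\infty(0,T;L^2(\Omega))}$, summing the two inequalities gives the claim.

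The step that needs care is this last one. It is essential that Proposition~\ref{prop_energy} be the \emph{global} estimate measuring the data through the endpoint \emph{values} $\norm{v|_{t=\tau}}_{L^2(\Omega)}$ rather than through $\norm{\p_t v|_{t=\tau}}_{H^{-1}(\Omega)}$: the residual norm only gives $\norm{\p_t u_h|_{t=\tau}}_{L^2(\Omega)} \lesssim h^{-\frac12}\norm{u_1}_{L^2(\Omega)}$, which blows up as $h \to 0$, whereas the endpoint values of $u_h$ are well behaved. This is exactly the rough-level energy theory discussed around Remark~\ref{rem_opt_rough}, and it is also the reason the observation inequality for $\phi_h$ is phrased in terms of $C^\frac12(\phi_h)$ and $\norm{\Box \phi_h}_{H^{-1}(M)}$ rather than any endpoint time derivative.
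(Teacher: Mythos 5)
Your reduction of the lemma to the bounds on $\norm{\Box u_h}_{H^{-1}(M)}$, $\norm{\Box \phi_h}_{H^{-1}(M)}$, $C^{\frac12}(\phi_h)$ and the endpoint traces is correct, and your treatment of $\phi_h$ is essentially the paper's: observability (Theorem \ref{th_obs_dist}, Remark \ref{rem_obs_variants}) combined with the energy estimate gives $\norm{\phi_h}_{L^2(M)} \lesssim \norm{u_1}_{L^2(\Omega)}$. The gap is in your final step for $u_h$. Proposition \ref{prop_energy} is not the estimate you quote: its right-hand side is
\[
\norm{v|_{t=0}}_{L^2(\Omega)} + \norm{\p_t v|_{t=0}}_{H^{-1}(\Omega)} + \norm{v}_{L^2((0,T)\times\p\Omega)} + \norm{\Box v}_{H^{-1}(M)},
\]
so it requires the initial \emph{velocity} in $H^{-1}(\Omega)$, not merely the values of $v$ at $t=0$ and $t=T$. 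Moreover, no estimate of the form $\norm{v}_{L^\infty(0,T;L^2(\Omega))} \lesssim \norm{v|_{t=0}}_{L^2(\Omega)} + \norm{v|_{t=T}}_{L^2(\Omega)} + \norm{\Box v}_{H^{-1}(M)}$ can hold for the wave equation, because two-time-slice data does not determine the solution: in one dimension, $v_n = \sin(nt)\sin(nx)$ on $(0,2\pi)\times(0,\pi)$ satisfies $\Box v_n = 0$ with vanishing lateral and endpoint traces, yet $\norm{v_n}_{L^\infty L^2}$ is a fixed positive constant; the same phenomenon persists in higher dimensions. For $\phi_h$ this issue is harmless, since the observability estimate supplies the missing quantity $\norm{\p_t \phi_h|_{t=0}}_{H^{-1}(\Omega)}$; but for $u_h$ there is no observation term in the residual norm, and the only available bound is $\norm{\p_t u_h|_{t=0} - u_1}_{L^2(\Omega)} \lesssim h^{-\frac12}\norm{u_1}_{L^2(\Omega)}$, hence $\norm{\p_t u_h|_{t=0}}_{H^{-1}(\Omega)} \lesssim h^{-\frac12}\norm{u_1}_{L^2(\Omega)}$ --- exactly the blow-up you acknowledge yourself.

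This is precisely the complication the paper flags (``the above estimates do not allow us to conclude that $\p_t u_h|_{t=0}$ is bounded''), and it is the reason for the auxiliary construction that your proof is missing: the paper builds a corrected discrete function $\tilde u_h \in V_h^p$, equal to $u_h$ on $\p M$, by adding patchwise bump functions whose coefficients cancel the patch averages of $\p_t u_h|_{t=0} - u_1$. This orthogonality, combined with a Poincar\'e inequality on patches of diameter $\sim h$, upgrades the $L^2$ information to
\[
\norm{\p_t \tilde u_h|_{t=0} - u_1}_{H^{-1}(\Omega)} \lesssim h \norm{\p_t u_h|_{t=0} - u_1}_{L^2(\Omega)} \lesssim h^{\frac12}\norm{u_1}_{L^2(\Omega)},
\]
so that $\p_t \tilde u_h|_{t=0}$ is uniformly bounded in $H^{-1}(\Omega)$. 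Proposition \ref{prop_energy} then applies to $\tilde u_h$, and the approximation property \eqref{eq:discapprox} transfers the $L^2(M)$ bound back to $u_h$. Without this step, or some substitute producing a uniform $H^{-1}(\Omega)$ bound on the discrete initial velocity, the $u_h$ half of the lemma is not proved.
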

\begin{proof}
Lemma \ref{lem_rough_box} implies 
    \begin{align*}
\norm{\Box u_h}_{H^{-1}(M)}
+ \norm{\Box \phi_h}_{H^{-1}(M)}
\lesssim \norm{u_1}_{L^2(\Omega)}.
    \end{align*}
Moreover, it follows from (\ref{rough_tnorm}) that 
    \begin{align*}
C^\frac12(\phi_h) \lesssim \norm{u_1}_{L^2(\Omega)},
\quad
\norm{\p_t^j u_h|_{t=0}}_{L^2(\Omega)} \lesssim h^{\frac12-j} \norm{u_1}_{L^2(\Omega)},
\qquad j=0,1.
    \end{align*}
The bound 
$\norm{\phi_h}_{L^2(M)} \le C \norm{u_1}$
follows from the distributed observability estimate, 
see Remark \ref{rem_obs_variants} below. 
It remains to show the same bound for $u_h$.
We face the complication that the above estimates do not allow us to conclude that $\p_t u_h|_{t=0}$ is bounded. 

To overcome this, we will employ $\tilde u_h \in V_h^p$
that coincides with $u_h$ on $\p M$ and satisfies (\ref{eq:negortho}) and (\ref{eq:discapprox}) below.
We have 
    \begin{align*}
\norm{\Box \tilde u}_{H^{-1}(M)} \lesssim \norm{u_1}_{L^2(\Omega)}.
    \end{align*}
Indeed, for any $v \in H^1_0(M)$ there holds, using (\ref{inverse_disc}) and (\ref{eq:discapprox}),
    \begin{align*}
&(h^2\Box \tilde u_h, v)_{L^2(M)}
= 
\int_M g(hd \tilde u_h, hd v) \d x
\\&\quad\le 
\int_M g(hd u_h, hd v) \d x
+ \norm{h\nabla(\tilde u_h - u_h)}_{L^2(M)} \norm{h \nabla v}_{L^2(M)}
\\&\quad\lesssim
\norm{h^2\Box u_h}_{H^{-1}(M)} \norm{v}_{H^1_0(M)}
+ h^{\frac12}\|h(\partial_t u_h|_{t=0} - u_1)\|_{L^2(\Omega)}  \norm{h \nabla v}_{L^2(M)}.
\\&\quad\lesssim
h^2(\norm{u_1}_{L^2(\Omega)} + h^{\frac12}\|\partial_t u_h|_{t=0}\|_{L^2(\Omega)}) \le h^2 \norm{u_1}_{L^2(\Omega)}.
    \end{align*}
Moreover, using (\ref{eq:negortho}),
    \begin{align*}
&\|\partial_t \tilde u_h|_{t=0}\|_{H^{-1}(\Omega)} 
\le 
\|\partial_t \tilde u_h|_{t=0} - u_1\|_{H^{-1}(\Omega)} 
+ \|u_1\|_{H^{-1}(\Omega)} 
\\&\quad\lesssim  h\|\partial_t u_h|_{t=0} - u_1\|_{L^2(\Omega)} 
+ \|u_1\|_{H^{-1}(\Omega)} 
\le h^\frac12 \norm{u_1}_{L^2(\Omega)} + \|u_1\|_{H^{-1}(\Omega)}.
    \end{align*}
Recalling that $\tilde u_h$ coincides with $u_h$ on $\p M$, we conclude that
    \begin{align*}
\norm{\tilde u_h}_{L^2(M)} \lesssim \norm{u_1}_{L^2(\Omega)}
    \end{align*} 
follows from an energy estimate, see Proposition \ref{prop_energy}
in Appendix \ref{appendix_cont}. 
Finally, using
(\ref{eq:discapprox}),
    \begin{align*}
\norm{u_h}_{L^2(M)}
&\le 
\|u_h - \tilde u_h\|_{L^2(M)} 
+ \norm{\tilde u_h}_{L^2(M)}
\\&\lesssim  
h^{\frac12}\|h(\partial_t u_h|_{t=0} - u_1)\|_{L^2(\Omega)} + \norm{u_1}_{L^2(\Omega)}
\lesssim \|u_1\|_{L^2(\Omega)}.
    \end{align*}
\end{proof} 

\begin{lemma}
Let $p \in \Np$ and consider a family $u_h \in V_h^p$, $h > 0$.
Let $u_1 \in L^2(\Omega)$.
Then there is a family $\tilde u_h \in V_h^p$, $h > 0$, such that 
$\tilde u_h|_{\partial M} = u_h|_{\partial M}$ and
\begin{align}\label{eq:negortho}
h^{-1}\|\partial_t \tilde u_h|_{t=0} - u_1\|_{H^{-1}(\Omega)} 
&\lesssim  \|\partial_t u_h|_{t=0} - u_1\|_{L^2(\Omega)},
\\\label{eq:discapprox}
\|u_h - \tilde u_h\|_{L^2(M)} 
&\lesssim  
h^{\frac12}\|h(\partial_t u_h|_{t=0} - u_1)\|_{L^2(\Omega)}.
\end{align}
\end{lemma}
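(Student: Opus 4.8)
The plan is to build $\tilde u_h$ as a correction of $u_h$ localized in the single layer of elements meeting $\{t=0\}$ that leaves every boundary value of $u_h$ untouched. Set $g = \p_t u_h|_{t=0} - u_1 \in L^2(\Omega)$, a well-defined trace since $u_h$ is polynomial on each element. Let $X_h \subset L^2(\Omega)$ be the space of functions that are constant on each bottom face $F \in \F$ with $F \subset \{t=0\}$ (these faces triangulate $\Omega$), and let $\Pi$ be the $L^2(\Omega)$-orthogonal projection onto $X_h$. I will produce $w_h \in V_h^p$ with $w_h|_{\p M} = 0$ and $\p_t w_h|_{t=0} = \Pi g$, and then put $\tilde u_h = u_h - w_h$. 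By construction $\tilde u_h|_{\p M} = u_h|_{\p M}$ (in particular $\tilde u_h|_{t=0} = u_h|_{t=0}$, as $w_h$ vanishes on $\{t=0\}$), while $\p_t \tilde u_h|_{t=0} - u_1 = g - \Pi g$, so the two estimates reduce to bounding $\|w_h\|_{L^2(M)}$ and $\|g - \Pi g\|_{H^{-1}(\Omega)}$.

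To construct $w_h$, I use as free degrees of freedom the Lagrange nodes at barycentric distance $1/p$ from $F$ (the first layer above each bottom face; for $p=1$ this is the apex), setting to zero all nodal values on $\p M$ and all values at nodes farther than $1/p$ from $\{t=0\}$. After the affine map to the reference element, the linear map from these first-layer values to $\p_t w_h|_{t=0}$ is a fixed isomorphism onto $\mathbb P_{p-1}(F) \supseteq \mathbb P_0(F)$, so the piecewise-constant datum $\Pi g$ is attainable; tracking the scaling, the realizing nodal coefficients obey $|c_a| \lesssim h\,|\Pi g|$ on the adjacent face.

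The two bounds then follow from scaling and duality. For \eqref{eq:discapprox} I estimate $\|w_h\|_{L^2(M)}$ element by element using $\|w_h\|_{L^2(K)}^2 \sim h^{n+1}\sum_a |c_a|^2$ and the $O(1)$ count of first-layer nodes per element, which gives $\|w_h\|_{L^2(M)}^2 \lesssim h^3 \|\Pi g\|_{L^2(\Omega)}^2 \le h^3\|g\|_{L^2(\Omega)}^2$, i.e. the claimed $h^{3/2}$ rate. For \eqref{eq:negortho} I exploit the orthogonality $g - \Pi g \perp X_h$: for $v \in H_0^1(\Omega)$ with piecewise-constant $L^2$-projection $\Pi_{X_h} v \in X_h$,
\[
(g - \Pi g, v)_{L^2(\Omega)} = (g - \Pi g, v - \Pi_{X_h} v)_{L^2(\Omega)} \le \|g\|_{L^2(\Omega)}\,\|v - \Pi_{X_h} v\|_{L^2(\Omega)} \lesssim h\,\|g\|_{L^2(\Omega)}\,\|v\|_{H^1(\Omega)},
\]
using $\|g - \Pi g\|_{L^2} \le \|g\|_{L^2}$ and the Poincar\'e bound $\|v - \Pi_{X_h} v\|_{L^2} \lesssim h\|v\|_{H^1}$. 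Taking the supremum over $v$ yields $\|g - \Pi g\|_{H^{-1}(\Omega)} \lesssim h\|g\|_{L^2(\Omega)}$, which is \eqref{eq:negortho}.

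I expect the construction step to be the main obstacle, for two related reasons. First, the first-layer nodes are generally shared between neighbouring bottom elements, so the map from nodal values to the family of bottom-face normal traces is coupled rather than block-diagonal, and its surjectivity onto $X_h$ must be argued globally. Second, elements abutting $\Gamma$ have some first-layer nodes pinned to zero, which degrades this surjectivity near the lateral boundary — most severely for $p=1$, where a bottom element whose apex lies on $\Gamma$ contributes $w_h \equiv 0$ and hence cannot match $\Pi g$ there. Handling this boundary layer cleanly — by invoking quasi-uniformity to secure enough free interior nodes per bottom element, or by absorbing the small uncontrolled contribution into the estimates — is the delicate point; the scaling and duality arguments above are otherwise routine.
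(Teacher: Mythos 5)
Your overall architecture --- correct $u_h$ by a function supported in the first layer of elements and vanishing on $\p M$, so that the residual $\p_t \tilde u_h|_{t=0} - u_1$ becomes orthogonal to a space of piecewise constants, then conclude \eqref{eq:negortho} by duality plus Poincar\'e and \eqref{eq:discapprox} by elementwise scaling --- is exactly the architecture of the paper's proof, and your duality and scaling computations are sound. But the construction step, which you yourself flag as the delicate point, is a genuine gap and not a removable technicality: exact matching $\p_t w_h|_{t=0} = \Pi g$ face by face is in general \emph{impossible} for $w_h \in V_h^p$ with $w_h|_{\p M} = 0$. For $p=1$ this is concrete: on a simplex $K$ with bottom facet $F \subset \{t=0\}$, a linear $w_h$ vanishing on $F$ equals $c_a \lambda_a$ with $a$ the opposite vertex, so $\p_t w_h|_F$ is a single constant proportional to $c_a$; if $a$ lies on $\Gamma$ then $c_a = 0$ is forced and nothing can be matched on $F$, and if $a$ is shared by several bottom elements (generic in an unstructured mesh, e.g. two adjacent bottom triangles with a common apex) the constants on those faces are all proportional to one another with geometry-dictated ratios and cannot be prescribed independently. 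So the map from admissible nodal values onto $X_h$ is not surjective, and neither of your suggested repairs works as stated: quasi-uniformity creates no extra free nodes when $p=1$ (the apex is the only node of $K$ off $\{t=0\}$), and the sharing obstruction is not confined to a small boundary layer, so the unmatched part of $\Pi g$ is not a negligible contribution.

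The missing idea --- and the paper's fix --- is to relax exact facewise matching to \emph{mean} matching on patches. The paper groups the bottom faces into disjoint patches $\p \mathcal P_i$ of diameter $\sim h$, each containing several faces and chosen large enough that the corresponding element patches $\mathcal P_i$ contain interior nodes; it then uses a single $P^1$ bubble $p_i$ per patch (equal to $1$ at the interior nodes of $\mathcal P_i$, supported in $\mathcal P_i$, hence vanishing on $\p M$) and one scalar $w_i$ per patch, chosen so that $\int_{\p \mathcal P_i} (\p_t \tilde u_h|_{t=0} - u_1) \,\mbox{d}s = 0$. Orthogonality to patchwise (rather than facewise) constants is all your duality argument needs, since the patches still have diameter $\sim h$ and Poincar\'e still produces the factor $h$; and because there is only one unknown per patch and the bubbles have disjoint supports, the coupling and boundary-pinning obstructions disappear entirely. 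One estimate is then needed which your orthogonal projection gave you for free, namely the $L^2$-stability $\norm{\p_t \tilde u_h|_{t=0} - u_1}_{L^2(\Omega)} \lesssim \norm{\p_t u_h|_{t=0} - u_1}_{L^2(\Omega)}$; the paper obtains it from the scalings $\alpha_i \sim h^{n-1}$, $\beta_i \sim h^{\frac n2 - 1}$ of the patch bubbles together with Cauchy--Schwarz. With this replacement of your construction, your two estimates go through essentially verbatim.
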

\begin{proof}
Let us consider the trace mesh at $t=0$,
    \begin{align*}
\mathcal F_{h,0} = \{\p K \cap \{t=0\} : K \in \mathcal T_h \}.
    \end{align*}
We decompose $\mathcal F_{h,0}$ into a set
of $N_h$ disjoint patches $\p \mathcal{P}_i$, $i=1,\dots,N_h$, such that each patch contains several element faces but their area
and diameter satisfy
    \begin{align*}
h^n \lesssim |\p \mathcal P_i| \lesssim h^n,
\quad h \lesssim \mathrm{diam}(P_i) \lesssim h.
    \end{align*}
Then we define disjoint patches $\mathcal{P}_i$
consisting of elements of $\mathcal{T}_h$
so that 
    \begin{align*}
\p \mathcal{P}_i = \mathcal{P}_i \cap \{t=0\}
    \end{align*}
and that $h^{n+1} \lesssim |\mathcal P_i| \lesssim h^{n+1}$. 
Now we define the functions $p_i \in V_h^1$
such that $\supp(p_i) \subset \mathcal{P}_i$ and $p_i(x) = 1$
for every node $x$ in the interior of $\mathcal{P}_i$. 
We require that the patches $\p \mathcal P_i$
are large enough so that, writing
    \begin{align*}
\alpha_i = \int_{\p \mathcal P_i} \p_t p_i|_{t=0} ~\mbox{d}s,
\quad 
\beta_i = \norm{\p_t p_i|_{t=0}}_{L^2(\p \mathcal P_i)},
\quad
\gamma_i = \norm{p_i}_{L^2(\mathcal P_i)},
    \end{align*}
there holds $h^{n-1} \lesssim \alpha_i \lesssim h^{n-1}$, $h^{\frac n 2 -1} \lesssim \beta_i \lesssim h^{\frac n 2 -1}$
and $h^{\frac 1 2(n+1)} \lesssim \gamma_i \lesssim h^{\frac 1 2(n+1)}$.

We set 
    \begin{align*}
\tilde u_h = u_h + \sum_{i = 1}^{N_h} w_i p_i,
\quad 
w_i = - \alpha_i^{-1} \int_{\partial \mathcal{P}_i} (\partial_t u_h|_{t=0} - u_1) ~\mbox{d}s.
    \end{align*}
Then 
    \begin{align}\label{tildeu_ortho}
\int_{\partial \mathcal{P}_i} (\partial_t \tilde u_h|_{t=0} - u_1) ~\mbox{d}s
= 0.
    \end{align}
To establish (\ref{eq:negortho}) we let $v \in H_0^1(\Omega)$ and  show that 
    \begin{align*}
(\partial_t \tilde u_h(\cdot,0) - u_1, v)_{L^2(\Omega)}
\lesssim 
\norm{\partial_t u_h(\cdot,0) - u_1}_{L^2(\Omega)} 
\norm{h \nabla v}_{L^2(\Omega)}.
    \end{align*}
Let $\bar v \in L^2(\Omega)$ be equal to the average of $v$ on
each patch $\p P_i$, that is, 
    \begin{align*}
\bar v \vert_{\partial \mathcal{P}_i} = |\partial \mathcal{P}_i|^{-1}
\int_{\partial \mathcal{P}_i} v ~\mbox{d}s.
    \end{align*}
Now (\ref{tildeu_ortho}) implies
$(\partial_t \tilde u_h|_{t=0} - u_1, \bar v)_{L^2(\Omega)} = 0$, and
    \begin{align*}
&(\partial_t \tilde u_h|_{t=0} - u_1, v)_{L^2(\Omega)}
=
(\partial_t \tilde u_h|_{t=0} - u_1, v - \bar v)_{L^2(\Omega)}
\\&\quad\le 
\norm{\partial_t \tilde u_h|_{t=0} - u_1}_{L^2(\Omega)} 
\norm{v - \bar v}_{L^2(\Omega)}
\lesssim 
\norm{\partial_t \tilde u_h|_{t=0} - u_1}_{L^2(\Omega)} 
\norm{h\nabla v}_{L^2(\Omega)}.
    \end{align*}
Here we used the Poincar\'e inequality as stated for example in \cite{Ern2017}.
To establish (\ref{eq:negortho}) it remains to show that 
    \begin{align*}
\norm{\partial_t \tilde u_h|_{t=0} - u_1}_{L^2(\Omega)} 
\lesssim 
\norm{\partial_t u_h|_{t=0} - u_1}_{L^2(\Omega)}.
    \end{align*}
Using the fact that the patches $\p \mathcal P_i$ are disjoint, we have
    \begin{align*}
\norm{\partial_t \tilde u_h|_{t=0} - u_1}_{L^2(\Omega)}
\le \norm{\partial_t u_h|_{t=0} - u_1}_{L^2(\Omega)}
+ \sum_{i=1}^{N_h} |w_i| \norm{\p_t p_i|_{t=0}}_{L^2(\p \mathcal P_i)}.
    \end{align*}
Recalling that $\alpha_i$ behaves like $h^{n-1}$ and $\beta_i$ like $h^{\frac n 2 -1}$, we obtain using the Cauchy--Schwarz inequality
    \begin{align*}
|w_i| \norm{\p_t p_i|_{t=0}}_{L^2(\p \mathcal P_i)} 
&=
\alpha_i^{-1} \beta_i \left| \int_{\partial \mathcal{P}_i} (\partial_t u_h|_{t=0} - u_1) ~\mbox{d}s \right|
\\& \lesssim 
h^{1-n} h^{\frac n 2 -1} h^{\frac{n}{2}} 
\|\partial_t u_h|_{t=0} - u_1\|_{L^2(\partial \mathcal{P}_i)}
= 
\|\partial_t u_h|_{t=0} - u_1\|_{L^2(\partial \mathcal{P}_i)}.
    \end{align*}

Let us now turn to \eqref{eq:discapprox}. Note that
\begin{align*}
\|u_h - \tilde u_h \|^2_{L^2(M)} =\sum_{i=1}^{N_h} |w_i|^2 \|p_i\|^2_{L^2(\mathcal{P}_i)}.
\end{align*}
Recalling that $\gamma_i^2$ behaves like $h^{n+1}$, we obtain
    \begin{align*}
|w_i|^2 \|p_i\|^2_{L^2(\mathcal{P}_i)}
= |w_i|^2 \gamma_i^2 
\lesssim h^{n+1} h^{2(1-n)} h^{n} 
\|\partial_t u_h|_{t=0} - u_1\|_{L^2(\partial \mathcal{P}_i)}^2,
    \end{align*}
leading to 
    \begin{align*}
\|u_h - \tilde u_h \|^2_{L^2(M)} \lesssim h^3 \|\partial_t u_h|_{t=0} - u_1\|_{L^2(\Omega)}^2.
    \end{align*}
\end{proof}

\begin{theorem}\label{th_rough}
Suppose that $u_0 = 0$ and $u_1 \in L^2(\Omega)$.
Let $(u_h, \phi_h)$ be the solution of \eqref{fem} with $\kappa = 0$,
and let $(u,\phi)$ be the solution of \eqref{control_eqs}. Then
there is a sequence $h_j \to 0$ such that $(u_{h_j}, \phi_{h_j})$
converges weakly to $(u,\phi)$ in $L^2(M)$.
\end{theorem}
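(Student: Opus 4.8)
The plan is to combine the uniform bounds of the preceding lemmas with weak compactness, and to identify the limit as the unique solution of \eqref{control_eqs}. By Lemma~\ref{lem_rough_bounded} the family $(u_h,\phi_h)$ is bounded in $L^2(M)\times L^2(M)$, so there is a subsequence $h_j\to 0$ along which $(u_{h_j},\phi_{h_j})\rightharpoonup(\tilde u,\tilde\phi)$ weakly in $L^2(M)\times L^2(M)$. It then suffices to prove that $(\tilde u,\tilde\phi)$ solves \eqref{control_eqs} with data $(0,u_1)$: applying Lemma~\ref{lem_uniq} to the difference with the genuine solution $(u,\phi)$ forces $(\tilde u,\tilde\phi)=(u,\phi)$, which is the assertion.

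The engine is to pass to the limit in \eqref{fem} tested against Scott--Zhang interpolants $i_h^p v_0$, $i_h^q\psi_0$ of fixed smooth $v_0,\psi_0$ vanishing on $\Gamma$. The key a priori input, from Lemma~\ref{lem_rough_tnorm} with $u_0=0$ and $\kappa=0$, is $\tnorm{(u_h,\phi_h)}\lesssim h\norm{u_1}_{L^2(\Omega)}$; in particular $S(u_h)+S(\phi_h)\lesssim h^2$ and $\norm{u_h|_{t=\tau}}_{L^2(\Omega)}\lesssim h^{1/2}$ for $\tau=0,T$, so the time-slice traces of $u_h$ vanish \emph{strongly}. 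After dividing the tested equation by $h^2$, every stabilization and regularization contribution ($s$, $\rho$, $\tilde c$, the $e$-terms) is killed by these bounds, while the source $L$ survives. The forms that carry information are $a(u_h,\psi_h)$ and $a(v_h,\phi_h)$, which by \eqref{int_by_parts} reduce to $h^2\int_M g(du_h,d\psi_h)$, resp.\ $h^2\int_M g(dv_h,d\phi_h)$, up to boundary terms on $\p M\setminus\Gamma$. Writing $e_h$ for the interpolation error and using \eqref{int_by_parts} to move derivatives onto $u_h$ or $\phi_h$, the contribution $\int_M g(du_h,de_h)$ splits into an elementwise $\Box$-term, a jump-term, and a boundary term; the first two are $O(h)$ because $S(u_h),S(\phi_h)\lesssim h^2$ control exactly $\sum_K\norm{h^2\Box u_h}_{L^2(K)}^2$ and $\sum_{F\in\mathcal F_h}h\norm{\jump{h\p_\nu u_h}}_{L^2(F)}^2$ and beat the $O(h^{-1})$ growth of discrete derivatives (via \eqref{interp} and \eqref{trace_cont}). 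Thus the main forms converge to $\int_M\tilde u\,\Box\psi_0$ and $\int_M\tilde\phi\,\Box v_0$.

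Concretely, testing with $(0,i_h^q\psi_0)$ for $\psi_0\in C^\infty(\bar M)$ vanishing on $\Gamma$, the slice term $(u_h,\p_\nu\psi_0)_{L^2(\p M\setminus\Gamma)}\to0$ because $u_h|_{t=\tau}\to0$ strongly, and the residual interpolation boundary term is $O(h)$ since the $O(h^{3/2})$ error on the slices beats the $O(h^{-1/2})$ growth of $\p_t u_h|_{t=\tau}$. This yields $\int_M\tilde u\,\Box\psi_0=(\chi\tilde\phi,\psi_0)_{L^2(M)}+(u_1,\psi_0|_{t=0})_{L^2(\Omega)}$ for every such $\psi_0$; varying the Cauchy data of $\psi_0$ and the normal derivative $\p_\nu\psi_0|_\Gamma$, and invoking Green's identity, delivers $\Box\tilde u=\chi\tilde\phi$, $\tilde u|_\Gamma=0$, $\tilde u|_{t=0}=0$, $\p_t\tilde u|_{t=0}=u_1$, and $\tilde u|_{t=T}=\p_t\tilde u|_{t=T}=0$. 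Testing with $(i_h^p v_0,0)$ for $v_0\in C_0^\infty(M)$ gives $\Box\tilde\phi=0$.

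The boundary condition $\tilde\phi|_\Gamma=0$ is the subtle point and the main obstacle, because the regularization controls the time-slice traces of $u_h$ but not those of $\phi_h$ (indeed $\p_t\phi_h|_{t=\tau}$ grows like $h^{-3/2}$, so probing $\Gamma$ through the slices fails). I would circumvent this by testing with $(i_h^p v_0,0)$ where $v_0\in C^\infty(\bar M)$ vanishes on $\Gamma$ \emph{and} identically near $\{t=0\}\cup\{t=T\}$, while $\p_\nu v_0|_\Gamma$ is arbitrary. Then all slice and $e$-term contributions vanish \emph{exactly}, the identity $\phi_h|_\Gamma=0$ removes $\int_\Gamma\phi_h\,\p_\nu v_0$, and passing to the limit leaves $\int_\Gamma\tilde\phi\,\p_\nu v_0=0$ for all such $v_0$, i.e.\ $\tilde\phi|_\Gamma=0$. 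Hence $(\tilde u,\tilde\phi)$ solves \eqref{control_eqs} with data $(0,u_1)$, and Lemma~\ref{lem_uniq} completes the proof.
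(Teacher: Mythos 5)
Your overall strategy coincides with the paper's: weak compactness from Lemma \ref{lem_rough_bounded}, reduction to uniqueness via Lemma \ref{lem_uniq}, and passage to the limit in \eqref{fem} tested against Scott--Zhang interpolants, with the bound $\tnorm{(u_h,\phi_h)}\lesssim h\norm{u_1}_{L^2(\Omega)}$ from Lemma \ref{lem_rough_tnorm} killing all stabilization terms; this reproduces the paper's limit identities \eqref{rough_eq_conv}--\eqref{rough_eq_conv2} and hence \eqref{ustar_eq}. Where you genuinely depart from the paper is the identification of the lateral boundary conditions. The paper never tests with nonzero Neumann data on $\Gamma$: it uses the uniform bounds on $\Box u_h$, $\Box\phi_h$ in $H^{-1}(M)$ (Lemma \ref{lem_rough_box}), the compactness of $L^2(M)\hookrightarrow H^{-\epsilon}(M)$, and the partial hypoellipticity estimate (Lemma \ref{lem_parthypo}) to conclude that the traces $u_{h_j}|_\Gamma,\phi_{h_j}|_\Gamma$ converge to those of the limit, so that $u_*|_\Gamma=\phi_*|_\Gamma=0$. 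You instead enlarge the test class in \eqref{rough_eq_conv2} to $v_0$ vanishing on $\Gamma$ and near the slices but with arbitrary $\p_\nu v_0|_\Gamma$; this does work, since the Scott--Zhang interpolant $i_h^p v_0$ then vanishes on $\Gamma$ and (for small $h$) near $\{t=0\}\cup\{t=T\}$, so the $e$-terms and right-hand side drop out of \eqref{fem} exactly, and the paper's own estimates (Lemma \ref{lem_a_cont} together with \eqref{interp}) give $h^{-2}a(v_0,\phi_h)=\int_M\phi_h\Box v_0\to 0$, hence $\int_M\tilde\phi\,\Box v_0=0$ for this larger class. This transposition-style characterization is an attractive alternative: it extracts the Dirichlet condition variationally, without invoking the compact embedding or the convergence of traces along the subsequence.

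The gap is in the last step of both boundary-condition arguments: converting the integral identities into the trace statements $\tilde\phi|_\Gamma=0$, $\tilde u|_\Gamma=0$, $\tilde u|_{t=0}=0$, $\p_t\tilde u|_{t=0}=u_1$, $\tilde u|_{t=T}=\p_t\tilde u|_{t=T}=0$, which is what Lemma \ref{lem_uniq} requires (its hypotheses are trace conditions, interpreted via partial hypoellipticity). Writing $0=\int_M\tilde\phi\,\Box v_0=\int_M(\Box\tilde\phi)v_0+\int_\Gamma\tilde\phi\,\p_\nu v_0$ presupposes that the trace $\tilde\phi|_\Gamma$ of the merely-$L^2$ function $\tilde\phi$ exists and that Green's identity is valid for it; the same issue arises for $\tilde u$ and for its time traces when you ``vary the Cauchy data''. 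These points are not automatic and are precisely what the paper spends effort on: the lateral traces exist by Lemma \ref{lem_parthypo} because $\Box\tilde\phi=0$ and $\Box\tilde u=\chi\tilde\phi$ lie in $L^2(M)$, the time traces of $\tilde u$ exist because $\p_t^2\tilde u=\Delta\tilde u+\chi\tilde\phi\in L^2(0,T;H^{-2}(\Omega))$ gives $\tilde u\in C(0,T;H^{-\frac12}(\Omega))$, $\p_t\tilde u\in C(0,T;H^{-\frac32}(\Omega))$ by \cite[Theorem 3.1, p.~19]{Lions1972}, and Green's identity for such rough functions must be justified by a density/mollification argument as in the proof of Lemma \ref{lem_uniq}. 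With those ingredients added, your argument closes; without them, ``invoking Green's identity'' is a formal step resting on unproved trace existence.
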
 




\begin{proof}
By Lemma \ref{lem_rough_bounded} both $u_h$ and $\phi_h$ are bounded in $L^2(M)$.
Thus there is a sequence $h_j \to 0$ such that 
$(u_{h_j}, \phi_{h_j})$
converges weakly to a function $(u_*,\phi_*)$ in $L^2(M)$.
By Lemma \ref{lem_uniq} it is enough to show that $(u_*,\phi_*)$ satisfies \eqref{control_eqs}.

As the embedding $H^{-\epsilon}(M) \subset L^2(M)$ is compact for $\epsilon > 0$, by passing to a subsequence, we may assume that $(u_{h_j}, \phi_{h_j}) \to (u_*,\phi_*)$ in $H^{-\epsilon}(M)$.
By Lemmas \ref{lem_rough_box} and \ref{lem_rough_tnorm}
we may further assume that 
$(\Box u_{h_j}, \Box \phi_{h_j}) \to (\Box u_*,\Box \phi_*)$ in $H^{-\epsilon-1}(M)$. For $\epsilon < 1/2$ it follows from Lemma \ref{lem_parthypo} that 
    \begin{align*}
0 = (u_{h_j}|_\Gamma, \phi_{h_j}|_\Gamma) \to (u_*|_\Gamma,\phi_*|_\Gamma).
    \end{align*}
Thus $(u_*,\phi_*)$ satisfies the homogeneous lateral boundary conditions in \eqref{control_eqs}.

For any $\psi \in C^\infty(M)$ with $\psi|_\Gamma = 0$ and any $v \in C_0^\infty(M)$ there holds
    \begin{align}\label{rough_eq_conv}
h^{-2}(a(u_h, \psi) - h^2 c(\phi_h, \psi) - L(\psi))
&\to 0, 
\\\label{rough_eq_conv2}
h^{-2} a(v, \phi_h) &\to 0,
    \end{align}
as $h \to 0$. 
Before showing (\ref{rough_eq_conv})--(\ref{rough_eq_conv2}),
let us show that they imply that $(u_*, \phi_*)$ satisfies \eqref{control_eqs}. The equation $\Box \phi_* = 0$ follows immediately from (\ref{rough_eq_conv2}).
Observe that 
    \begin{align*}
h^{-2} a(u_h, \psi) = (u_h, \Box \psi)_{L^2(M)}
\to (u_*, \Box \psi)_{L^2(M)}.
    \end{align*}
It follows from (\ref{rough_eq_conv}) that for any $\psi \in C^\infty(M)$ vanishing on $\Gamma$ there holds
    \begin{align}\label{ustar_eq}
(u_*, \Box \psi)_{L^2(M)} = c(\phi_*, \psi) + (u_1, \psi|_{t=0})_{L^2(\Omega)} - (u_0, \p_t \psi|_{t=0})_{L^2(\Omega)}.
    \end{align}
In particular, taking $\psi \in C_0^\infty(M)$
we see that $\Box u_* = \chi \phi_*$.

To show that $(u_*, \phi_*)$ satisfies \eqref{control_eqs},
it remains to verify the initial and final conditions for $u_*$. 
We have $u_* \in L^2(M)$ and 
    \begin{align*}
\p_t^2 u_* = \Delta u_* + \chi \phi_* \in L^2(0,T;H^{-2}(\Omega)).
    \end{align*}
Now \cite[Theorem 3.1, p. 19]{Lions1972}
gives 
    \begin{align*}
u_* \in C(0,T;H^{-\frac12}(\Omega)),
\quad
\p_t u_* \in C(0,T;H^{-\frac32}(\Omega)).
    \end{align*}
Taking $\psi(t,x) = \psi_0(t) \psi_1(x)$, with 
$\psi_0 \in C^\infty(0,T)$ and $\psi_1 \in C_0^\infty(\Omega)$,
we integrate by parts
    \begin{align*}
(u_*, \Box \psi)_{L^2(M)} 
&= 
\int_0^T \pair{u_*, \psi_1} \p_t^2 \psi_0 \d t
- \int_0^T \pair{u_*, \Delta \psi_1} \psi_0 \d t
\\&= 
\int_0^T (\chi \phi_*, \psi_1)_{L^2(\Omega)} \psi_0 \d t
+ \left[\pair{u_*, \psi_1} \p_t \psi_0 - \pair{\p_t u_*, \psi_1} \psi_0\right]_{t=0}^{t=T},
    \end{align*}
where $\pair{\cdot, \cdot}$ is the pairing between distribution and test functions on $\Omega$. Comparison with (\ref{ustar_eq}) shows that 
$u_*$ satisfies the initial and final conditions in \eqref{control_eqs}.

Let us now show (\ref{rough_eq_conv}). 
Denote by $\psi_h$ the Scott--Zhang interpolant of $\psi$.
By (\ref{fem})
    \begin{align*}
&a(u_h, \psi) - h^2 c(\phi_h, \psi) - L(\psi)
\\&\quad=
a(u_h, \phi - \psi_h) - h^2 c(\phi_h, \psi- \psi_h) - L(\psi- \psi_h)
\\&\qquad\quad
+ s(\phi_h, \psi_h) - h^4 \tilde c(\phi_h, \psi_h) - h^2 \rho(u_h, \psi_h).
    \end{align*}
Using the continuity of $a$ in Lemma \ref{lem_a_cont}, the interpolation estimate (\ref{interp}), and the bound (\ref{rough_tnorm}) for the residual norm, we obtain
    \begin{align*}
|a(u_h, \psi - \psi_h)| \lesssim \tnorm{(u_h, \phi_h)} \norm{(hD)^2 \psi}_{L^2(M)} \lesssim h \norm{u_1}_{L^2(M)} \norm{(hD)^2 \psi}_{L^2(M)}.
    \end{align*}
Recalling that $h C^\frac12(\phi_h) \lesssim \tnorm{(0,\phi_h)}$, 
we use the continuity (\ref{bilinear_bound}) for $c$ and the interpolation estimate (\ref{interp}) to get 
    \begin{align*}
h^2 |c(\phi_h, \psi- \psi_h)|
\le h^2 C^\frac12(\phi_h) C^\frac12(\psi- \psi_h)
\lesssim h \tnorm{(0,\phi_h)} \norm{(hD)^2 \psi}_{L^2(M)}.
    \end{align*}
Using once again (\ref{interp}),
    \begin{align*}
|L(\psi- \psi_h)| = h^2 |(u_1, (\psi - \psi_h)|_{t=0})_{L^2(\Omega)}|
\lesssim h^{3/2} \norm{u_1}_{L^2(\Omega)} \norm{(hD)^2 \psi}_{L^2(M)}.
    \end{align*}
Turning to the first term related to regularization, we have
    \begin{align*}
|s(\phi_h, \psi_h)|
\le S^\frac12(\phi_h) S^\frac12(\psi_h),
    \end{align*}
where the first factor is bounded by $\tnorm{(0,\phi_h)} \lesssim h \norm{u_1}_{L^2(\Omega)}$,
and the second satisfies 
    \begin{align*}
S(\psi_h) &\lesssim \sum_{K \in \mathcal T_h} (\norm{h^2\Box (\psi_h - \psi)}_{L^2(K)}^2 + \norm{h^2\Box \psi}_{L^2(K)}^2)
+ \sum_{F \in \mathcal F_h} h \norm{\jump{h\p_\nu (\psi_h - \psi)}}_{L^2(F)}^2
\\&\lesssim \norm{(hD)^2 \psi}_{L^2(M)}^2.
    \end{align*}
Finally,
    \begin{align*}
h^4 |\tilde c(\phi_h, \psi_h)|
&\lesssim h^3 \tnorm{(0,\phi_h)} \norm{\psi}_{L^2(M)},
\\
h^2 |\rho(u_h, \psi_h)|
&\lesssim h^2 \tnorm{(u_h, 0)} \norm{\psi}_{L^2(M)}
\lesssim h^3 \norm{u_1} \norm{\psi}_{L^2(M)},
    \end{align*}
and (\ref{rough_eq_conv}) follows. 
 
We turn to (\ref{rough_eq_conv2}).
Denote by $v_h$ the Scott--Zhang interpolant of $v$.
By (\ref{fem})
    \begin{align*}
a(v,\phi_h) = a(v-v_h, \phi_h) - s(u_h, v_h) + h^2 \rho(v_h, \phi_h).
    \end{align*}
Similarly to the bounds above, we have 
    \begin{align*}
|a(v-v_h, \phi_h)| + |s(u_h, v_h)|
&\lesssim 
h \norm{u_1}_{L^2(\Omega)} \norm{(hD)^2 v}_{L^2(M)},
\\
h^2 |\rho(v_h, \phi_h)| 
&\lesssim
h \tnorm{(0,\phi_h)} \norm{h^2 \Box v}_{L^2(M)},
    \end{align*}
and (\ref{rough_eq_conv2}) follows. This finishes the proof that  
$(u_*,\phi_*)$ satisfies \eqref{control_eqs}.
\end{proof} 

\section{Boundary control}

Let us begin by formulating our assumptions on the cutoff function $\chi$ in (\ref{wave_eq_intro_bd}).
We consider a function of the form
    \begin{align*}
\chi(t,x) = \chi_0(t) \chi_1^2(x),
    \end{align*}
where $\chi_0 \in C_0^\infty([0,T])$
and $\chi_1 \in C^\infty(\Gamma)$  take values in $[0,1]$,
and suppose that 
\begin{itemize}
\item[(A')] $\chi = 1$
on open $(a,b) \times \omega \subset \Gamma$
satisfying the geometric control condition.
\end{itemize}
In the case of boundary control, the geometric control condition
means that every compressed generalized bicharacteristic intersects the set $(a,b) \times \omega$, when projected to $M$.
Moreover, the intersection must happen at a nondiffractive point and the lightlike lines must have finite order of contact with $\Gamma$. We refer again to \cite{BLRII} for the definitions. 

We let $V \in C^\infty(\Omega)$ and consider the boundary control problem for the following operator
    \begin{align}\label{def_P}
P = \Box + V.
    \end{align}
Let $(u_0, u_1) \in L^2(\Omega) \times H^{-1}(\Omega)$. 
Then the distributed control problem for $P$ can be solved by finding $(u,\phi) \in L^2(M) \times H^1(M)$ such that 
    \begin{align}\label{control_eqs_bd}
&\begin{cases}
P u = 0,
\\
u|_\Gamma = \chi \p_\nu \phi,
\\
u|_{t=0} = u_0,\ \p_t u|_{t=0} = u_1,
\\
u|_{t=T} = 0,\ \p_t u|_{t=T} = 0,
\end{cases}
\quad \begin{cases}
P \phi = 0,
\\
\phi|_\Gamma =0.
\end{cases}
    \end{align}
If $(u_0,u_1) \in H^{k+1}(\Omega)\times H^{k}(\Omega)$
satisfies the compatibility conditions of order $k$,
then the unique solution $(u,\phi)$ to (\ref{control_eqs_bd}) satisfies 
    \begin{align}\label{phi_init}
\phi|_{t=T} \in H^{k+2}(\Omega), 
\quad 
\p_t \phi|_{t=T} \in H^{k+1}(\Omega), 
    \end{align}
and this initial data for $\phi$ satisfies the compatibility conditions of order $k+1$, see \cite[Thëorem 5.4]{Ervedoza2010}.
It follows that $\phi \in H^{k+2}(M)$ and $u \in H^{k+1}(M)$, and the convergence proof for our finite element method is again based on this regularity.

Although uniqueness of the solution $(u,\phi)$ to (\ref{control_eqs_bd}) is implictly contained in \cite{Ervedoza2010}, we give a short proof. This illustrates the difference in natural regularities between the distributed and boundary control cases. 

\begin{lemma}
Suppose that (A') holds. Let $(u, \phi) \in L^2(M) \times H^1(M)$
solve (\ref{control_eqs_bd}) with $u_0 = u_1 = 0$.
Then $u = \phi = 0$.
\end{lemma}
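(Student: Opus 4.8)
The plan is to mirror the argument of Lemma~\ref{lem_uniq}, replacing its interior pairing by a boundary pairing governed by the hidden regularity of the normal derivative $\p_\nu\phi$ on $\Gamma$. First I would record that, since $\phi \in H^1(M)$ solves $P\phi = 0$ with $\phi|_\Gamma = 0$, the trace $\p_\nu\phi$ is a well-defined element of $L^2(\Gamma)$: this is the classical hidden-regularity estimate for the wave equation (see \cite{LLT}), and it is exactly what makes the prescribed boundary datum $u|_\Gamma = \chi\,\p_\nu\phi$ belong to $L^2(\Gamma)$, so that $u \in C(0,T;L^2(\Omega)) \cap C^1(0,T;H^{-1}(\Omega))$ by transposition. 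The remaining traces appearing below are well defined by partial hypoellipticity, as in Lemma~\ref{lem_parthypo}.

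Next I would run the density argument. Parametrising $\phi$ by its Cauchy data $(\phi|_{t=0},\p_t\phi|_{t=0})$, I would pick smooth data $(\phi_j^0,\phi_j^1) \in C_0^\infty(\Omega)\times C_0^\infty(\Omega)$ converging to it in $H_0^1(\Omega)\times L^2(\Omega)$, let $\phi_j$ solve $P\phi_j = 0$, $\phi_j|_\Gamma = 0$ with this data, and let $u_j$ solve $Pu_j = 0$, $u_j|_\Gamma = \chi\,\p_\nu\phi_j$, $u_j|_{t=0} = \p_t u_j|_{t=0} = 0$, imposing only the initial, not the final, conditions. Since $\chi_0$ vanishes near $t=0,T$ the data are compatible and $(u_j,\phi_j)$ are smooth enough to integrate by parts classically. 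Applying the antisymmetric form of (\ref{int_by_parts}) to $P = \Box + V$, where the zeroth-order term $V$ cancels, yields
\begin{align*}
0 = \int_{\p M} (u_j\,\p_\nu\phi_j - \phi_j\,\p_\nu u_j)\d s.
\end{align*}
On $\{t=0\}$ the contribution vanishes because $u_j$ has zero Cauchy data there; on $\Gamma$ it reduces to $\int_\Gamma \chi\,(\p_\nu\phi_j)^2\d s$ using $\phi_j|_\Gamma = 0$ and $u_j|_\Gamma = \chi\,\p_\nu\phi_j$; and on $\{t=T\}$ it equals the final-time pairing of the Cauchy data of $u_j$ and $\phi_j$.

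I would then pass to the limit $j\to\infty$. By hidden regularity the map from Cauchy data to $\p_\nu\phi$ is continuous, so $\p_\nu\phi_j \to \p_\nu\phi$ in $L^2(\Gamma)$ and $\int_\Gamma\chi\,(\p_\nu\phi_j)^2\d s \to \int_\Gamma\chi\,(\p_\nu\phi)^2\d s$; meanwhile the final-time pairing converges to the corresponding pairing of the Cauchy data of $u$ and $\phi$ at $t=T$, which vanishes since $u|_{t=T} = \p_t u|_{t=T} = 0$. Hence $\int_\Gamma \chi\,(\p_\nu\phi)^2\d s = 0$, so $\p_\nu\phi = 0$ on $\supp(\chi)$, in particular on $(a,b)\times\omega$. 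The boundary observability estimate, valid under (A') (see \cite{BLRII}), then forces the Cauchy data of $\phi$ to vanish, whence $\phi = 0$. Consequently $u|_\Gamma = \chi\,\p_\nu\phi = 0$, and $u$ solves $Pu = 0$ with homogeneous lateral boundary condition and vanishing Cauchy data at $t=0$; uniqueness for the wave equation (energy estimate, \cite{LLT}) gives $u = 0$.

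The main obstacle is the low-regularity bookkeeping in the limit: one must verify that the transposition solution map $\chi\,\p_\nu\phi_j \mapsto u_j$, together with its final traces $(u_j|_{t=T},\p_t u_j|_{t=T})$, is continuous from $L^2(\Gamma)$ into $C(0,T;L^2(\Omega))\cap C^1(0,T;H^{-1}(\Omega))$, so that the pairing $\pair{u_j|_{t=T},\p_t\phi_j|_{t=T}} - \pair{\p_t u_j|_{t=T},\phi_j|_{t=T}}$ passes to the limit against the $H_0^1\times L^2$-convergent Cauchy data of $\phi_j$. This is precisely the hidden-regularity and transposition machinery of \cite{LLT}, and it is the one point where the boundary case genuinely differs from the interior case of Lemma~\ref{lem_uniq}, reflecting the asymmetric regularities $u \in L^2(M)$ and $\phi \in H^1(M)$.
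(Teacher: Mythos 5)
Your proposal is correct and follows essentially the same route as the paper's proof: establish the trace regularity $\p_\nu\phi\in L^2(\Gamma)$ and the weak continuity of $u$, justify the Green identity $0=(Pu,\phi)_{L^2(M)}-(u,P\phi)_{L^2(M)}=-(\chi\p_\nu\phi,\p_\nu\phi)_{L^2(\Gamma)}$ by smooth approximation exactly as in Lemma \ref{lem_uniq} (which is what the paper does, only more tersely), and then invoke the boundary observability estimate (Theorem \ref{th_obs_bd}) to conclude $\phi=0$ and hence $u=0$. The only point you take for granted is that the Cauchy data of $\phi$ lies in $H^1_0(\Omega)\times L^2(\Omega)$, which is needed both for your approximating sequence and for the hidden-regularity estimate of \cite{LLT}; the paper establishes this first, from $\phi\in H^1(M)$ alone, by applying the standard energy estimate on slices $\{t=s\}$ and integrating in $s$.
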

\begin{proof}
For the convenience of the reader, we show first that 
    \begin{align}\label{phi_reg_bd}
\phi \in C(0,T; H^1_0(\Omega)) \cap C^1(0,T; L^2(\Omega)),
\quad \p_\nu \phi|_\Gamma \in L^2(\Gamma).
    \end{align}
The proof of this fact is very similar to the proof of Lemma \ref{lem_linfty_ptu}. The standard energy estimate implies that for all $s \in (0,T)$,
    \begin{align*}
\norm{\phi|_{t=0}}_{H^1(\Omega)} 
+
\norm{\p_t\phi|_{t=0}}_{L^2(\Omega)} 
\lesssim
\norm{\phi|_{t=s}}_{H^1(\Omega)} 
+
\norm{\p_t\phi|_{t=s}}_{L^2(\Omega)}.
    \end{align*}
Integration in $s$ gives
    \begin{align*}
\norm{\phi|_{t=0}}_{H^1(\Omega)} 
+
\norm{\p_t\phi|_{t=0}}_{L^2(\Omega)} 
\lesssim
\norm{\phi}_{H^1(M)},
    \end{align*}
and the regularity (\ref{phi_reg_bd})
follows now from \cite{LLT}. 
It also follows from \cite{LLT} that 
    \begin{align*}
u \in C(0,T; L^2(\Omega)) \cap C^1(0,T; H^{-1}(\Omega)).
    \end{align*}
 
In the case that $u$ and $\phi$ are smooth
    \begin{align*}
0 = (P u, \phi)_{L^2(M)} - (u, P \phi)_{L^2(M)}
= -(\chi \p_\nu \phi, \p_\nu \phi)_{L^2(\Gamma)},
    \end{align*}
and for $(u, \phi) \in L^2(M) \times H^1(M)$ this can be justified by approximating $u$ and $\phi$ with smooth functions as in the proof of Lemma \ref{lem_uniq}. It follows from the boundary observability estimate, see Theorem \ref{th_obs_bd},
that $\phi = 0$ identically, and hence also $u=0$ identically.
\end{proof}

\subsection{Discretization}

Let us consider a family $\hat{\mathcal T} = \{\hat{\mathcal{T}}_h : h > 0\}$ where $\hat{\mathcal{T}}_h$ is a set of $1+n$-dimensional simplices forming a simplicial complex. The family $\hat{\mathcal T}$ is parametrized by 
    \begin{align*}
h = \max_{K \in \hat{\mathcal T}_h} \diam(K).
    \end{align*}
Writing $M_h = \bigcup_{K \in \hat{\mathcal{T}}_h} K$,
we assume that $M \subset M_h$. 
We define 
    \begin{align*}
\mathcal T_h = \{K \cap M : K \in \mathcal{T}_h\},
\quad \mathcal T = \{\mathcal T_h : h > 0 \},
    \end{align*}
and require that:
\begin{itemize}
\item[(T)] There is $C > 0$ such that for all $h> 0$ and all 
$K \in \mathcal{T}_h$, letting $\hat K \in \hat{\mathcal{T}}_h$
satisfy $K = \hat K \cap M$, there are balls $B_1 \subset K$ and $B_2 \supset \hat K$
such that the radii $r_j$ of $B_j$, $j=1,2$, satisfy
    \begin{align}\label{T_unif}
C^{-1} r_2 \le h \le C r_1,
    \end{align}
and that 
    \begin{align}\label{T_reg}
\nu(y) \cdot \rho(y) > C^{-1}, \quad \text{for all $y \in \p K$},
    \end{align} 
where $\nu$ is the outer unit normal vector of $\p K$, and $\rho(y) = (y-x)/|y-x|$ with $x$ is the centre of $B_1$.
\end{itemize} 

If $M$ was polyhedral, then we could choose $\hat{\mathcal T}$ so that $M_h = M$ for all small enough $h > 0$. In this case (T) follows if $\hat{\mathcal T}$ is quasi-uniform, see \cite[Definition 1.140]{Ern2004}. In the case of smooth $\Omega$, we can construct $\hat{\mathcal T}$ so that (T) holds for all small enough $h > 0$ by choosing polyhedral sets $M_h \supset M$ that approximate $M$ in the sense that the Hausdorff distance between $\p M_h$ and $\p M$ is of order $h^{1+\epsilon}$ for some $\epsilon > 0$, and meshing $M_h$ in a quasi-uniform manner. 

We define for $p \in \Np = \{1,2,\dots\}$ the $H^1(M)$-conformal approximation space of polynomial degree $p$,
 \begin{align}
\label{def_Vh}
V^p_{h} = \{u \in H^1(M) : u |_K \in \mathbb{P}_p(K)
\text{ for all $K \in \mathcal T_h$} \},
\end{align}
where $\mathbb{P}_p(K)$ denotes the set of polynomials of degree less
than or equal to $p$ on $K$. 
We write also $V_h = \bigcup_{p \in \Np} V_h^p$.
Note that, contrary to (\ref{def_Vh0}) no boundary condition is imposed on $\Gamma$.

The following two lemmas are proven in Appendix \ref{appendix_bd}.

\begin{lemma}\label{lem_trace_bd}
The trace inequality \eqref{trace_cont} holds for the family $\mathcal{T}$.
\end{lemma}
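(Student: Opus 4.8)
The plan is to prove \eqref{trace_cont} on each cut element $K \in \mathcal{T}_h$ by a divergence-theorem argument with a radial vector field anchored at the centre of the inscribed ball, exploiting precisely the two quantitative features supplied by assumption (T): the balls $B_1 \subset K \subset \hat K \subset B_2$ of radii comparable to $h$ in \eqref{T_unif}, and the star-shapedness \eqref{T_reg} of $\p K$ with respect to the centre $x$ of $B_1$. Since \eqref{T_reg} makes $K$ Lipschitz, $C^\infty(\overline K)$ is dense in $H^1(K)$, so it suffices to establish the estimate for smooth $u$ and pass to the limit.

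First I would fix $K$, let $x$ be the centre of $B_1$, and consider $F(y) = y - x$, so that $\div F = 1 + n$ and $|F(y)| = |y-x| \le 2 r_2 \lesssim h$ for $y \in K \subset B_2$. Applying the Euclidean divergence theorem to $u^2 F$ gives
\begin{align*}
\int_{\p K} u^2 \, (F \cdot \nu) \d s
= (1+n) \int_K u^2 \d y + 2 \int_K u \, (F \cdot \nabla u) \d y.
\end{align*}
On the boundary, \eqref{T_reg} yields $F \cdot \nu = |y-x|\,\rho(y)\cdot\nu \ge C^{-1} |y-x| \ge C^{-1} r_1 \gtrsim h$, where $|y-x| \ge r_1$ because $B_1 = B(x,r_1) \subset K$ forces every point of $\p K$ to lie outside $B_1$; hence the left-hand side bounds $h \int_{\p K} u^2 \d s$ from above. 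On the right-hand side, the first term is $\norm{u}_{L^2(K)}^2$ up to a constant, while the second, using $|F| \lesssim h$ together with Cauchy--Schwarz and Young's inequality, is controlled by $\norm{u}_{L^2(K)} \norm{h \nabla u}_{L^2(K)} \le \tfrac12 \norm{u}_{L^2(K)}^2 + \tfrac12 \norm{h\nabla u}_{L^2(K)}^2$. Combining these estimates and taking square roots yields \eqref{trace_cont} with a constant depending only on $C$ and $n$.

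The arguments are routine once the vector field is in place; the crux is uniformity of the constant over the family $\mathcal{T}$, and this is exactly what the scale-invariant bounds \eqref{T_unif}--\eqref{T_reg} guarantee: $r_1 \gtrsim h$ prevents the lower bound $F \cdot \nu \gtrsim h$ from degenerating, while $r_2 \lesssim h$ keeps $|F| \lesssim h$ so that the correct power of $h$ multiplies the gradient term. The main (mild) obstacle is the irregular geometry of the cut elements, which may have curved faces fitting $\Omega$ and poor shape regularity in the classical sense; this is harmless here because the proof never invokes flat faces or a reference-element mapping, so only star-shapedness and the two ball radii enter and the argument goes through verbatim for curved elements.
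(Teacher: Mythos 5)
Your proof is correct and is essentially the paper's own argument: the paper works in spherical coordinates centred at the centre $x$ of $B_1$, writes $\p K$ as the graph of a piecewise smooth radial function $R(\theta)$ (star-shapedness from \eqref{T_reg}), and integrates $\p_r\bigl(u^2 r^{1+n}\bigr)$ radially, which is precisely the divergence theorem applied to $u^2(y-x)$ as in your proposal. The quantitative ingredients also coincide — \eqref{T_reg} together with $r_1 \gtrsim h$ gives the lower bound $F\cdot\nu \gtrsim h$ on $\p K$, while $r_2 \lesssim h$ gives $|F| \lesssim h$ for the volume term — so your argument matches the paper's proof up to presentation.
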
 
\begin{lemma}
There is a family of interpolation operators $i^p_h : H^1(M) \to V_h^p$ satisfying
    \begin{align}\label{interp2}
\norm{u- i_h^p u}_{H^k(\mathcal T_h)}
\lesssim h^k \norm{u}_{H^k(M)}.
    \end{align}
\end{lemma}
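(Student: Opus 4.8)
The plan is to reduce the estimate on the cut mesh $\mathcal{T}_h$ to the standard Scott--Zhang estimate (\ref{interp}) applied on the ambient simplicial mesh $\hat{\mathcal{T}}_h$. The first observation is that $\hat{\mathcal{T}}_h$ is quasi-uniform: the bounds (\ref{T_unif}) say that each $\hat K \in \hat{\mathcal{T}}_h$ is contained in a ball of radius $\lesssim h$ and contains the ball $B_1 \subset K \subset \hat K$ of radius $\gtrsim h$, which is exactly uniform shape-regularity together with comparability of element sizes. Hence (\ref{interp}), proved in \cite{SZ90} for quasi-uniform meshes, holds verbatim for $\hat{\mathcal{T}}_h$ with $M$ replaced by $M_h$; here I would use the variant of the Scott--Zhang operator that does not preserve any boundary condition, which is consistent with the space $V_h^p$ in (\ref{def_Vh}) carrying no constraint on $\Gamma$.

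First I would fix an open set $\tilde M$ with $\overline{M_h} \subset \tilde M$ for all sufficiently small $h$; this is possible because $M \subset M_h$ and the Hausdorff distance between $\p M_h$ and $\p M$ is of order $h^{1+\epsilon}$. Since $M = (0,T) \times \Omega$ is a Lipschitz domain, it admits a Stein total extension operator $E : H^k(M) \to H^k(\tilde M)$, bounded for every order simultaneously, with $\norm{Eu}_{H^k(\tilde M)} \lesssim \norm{u}_{H^k(M)}$. I would then define $i_h^p u := (\hat i_h^p (Eu))|_M$, where $\hat i_h^p$ is the Scott--Zhang interpolant on $\hat{\mathcal{T}}_h$. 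Because $\hat i_h^p(Eu)$ is continuous and restricts to a polynomial of degree $p$ on each $\hat K$, its restriction to $M$ lies in $V_h^p$, so $i_h^p$ is a well-defined family of operators $H^1(M) \to V_h^p$.

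For the error estimate I would use that $Eu|_M = u$, so on every $K = \hat K \cap M$ one has $(u - i_h^p u)|_K = (Eu - \hat i_h^p(Eu))|_K$, and since $K \subseteq \hat K$ the restriction only decreases the local norms, $\norm{(hD)^j v}_{L^2(K)} \le \norm{(hD)^j v}_{L^2(\hat K)}$ for $v = Eu - \hat i_h^p(Eu)$ and each $j = 0,\dots,k$. Summing over $K$, equivalently over $\hat K$, and invoking (\ref{interp}) on $\hat{\mathcal{T}}_h$ together with the extension bound gives
\begin{align*}
\norm{u - i_h^p u}_{H^k(\mathcal{T}_h)}
\le \norm{Eu - \hat i_h^p(Eu)}_{H^k(\hat{\mathcal{T}}_h)}
\le C \norm{(hD)^k Eu}_{L^2(M_h)}
\lesssim h^k \norm{u}_{H^k(M)},
\end{align*}
where the last step uses $\norm{(hD)^k Eu}_{L^2(M_h)} = h^k \norm{D^k Eu}_{L^2(M_h)} \le h^k \norm{Eu}_{H^k(\tilde M)}$.

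The conformity of $i_h^p u$ and the restriction step are routine, so the one point deserving care is the uniformity of the extension in $h$. Because the computational domain $M_h$ varies with $h$, the extension must be carried out once to the \emph{fixed} enlargement $\tilde M$ rather than to $M_h$; the Hausdorff bound $O(h^{1+\epsilon})$ is precisely what guarantees $M_h \subset \tilde M$ for all small $h$, so a single Stein operator suffices and no $h$-dependent constant is introduced. I expect this to be the main (and essentially only) obstacle. Note that the condition (\ref{T_reg}) is not needed for this lemma; its role is rather to supply trace and inverse inequalities directly on the cut elements, as used for Lemma \ref{lem_trace_bd}.
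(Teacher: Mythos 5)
Your proposal is correct and follows essentially the same route as the paper: extend $u$ by a bounded Sobolev extension operator, apply the Scott--Zhang interpolant on the ambient mesh $\hat{\mathcal T}_h$, restrict to $M$, and use that restriction only decreases the broken norms. The only cosmetic difference is that the paper extends directly to all of $\R^{1+n}$, which sidesteps the point you flag as delicate (choosing a fixed enlargement $\tilde M$ containing every $M_h$), since a full-space extension trivially dominates all the varying computational domains.
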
  

For any $h>0$, the control problem (\ref{control_eqs_bd}) can be formulated weakly as 
    \begin{align}\label{weak_form_bd}
a(u,\psi) = -c(h^{-1}\phi, \psi) + L(\psi), \quad a(v,\phi) = 0,
    \end{align}
for all $v,\psi \in C^\infty(M)$, where
\HOX{Lauri: The signs are a pain. I got the opposite sign in front of
  $c$, compared to the previous version.}
    \begin{align}\label{def_a2}
a(u,\psi) &= \int_{M} g(hdu, hd\psi) \d x + h^2(u, V \psi)_{L^2(M)}
\\\notag&\qquad
-h(u, h\p_\nu \psi)_{L^2(\p M)} - h(h\p_\nu u, \psi)_{L^2(\Gamma)}
\\\notag
c(\phi, \psi) &= h (\chi h \p_\nu \phi, h \p_\nu \psi)_{L^2(\Gamma)},
    \end{align}
and $L$ is as in (\ref{def_a1}).
Indeed, it follows from (\ref{int_by_parts}) that if smooth $(u,\phi)$ solves (\ref{control_eqs_bd})
then (\ref{weak_form}) holds for all smooth $(v,\psi)$.
We emphasize that $a$ and $c$ are chosen here so that they satisfy the continuity estimate (\ref{bilinear_bound}).

Our finite element method has the form:
find the critical point of the Lagrangian
    \begin{align*}
\mathcal L(u,\phi) : V_h^p \times V_h^q \to \R, \quad 
\mathcal L(u,\phi) = 
\frac12 c(\phi, \phi) + L(\phi)
- \frac12 \mathcal R(u,\phi)
+ a(u, \phi),
    \end{align*}
where, writing $U = (u, \p_t u)$ and $U_0 = (u_0, u_1)$,
the regularization is given by
    \begin{align*}
\mathcal R(u,\phi)
&= h^{-\kappa} S(u) - h^\kappa S(\phi) 
+ h^{-\kappa} E(U|_{t=0} - U_0)
+ h^{-\kappa} E(U|_{t=T}) 
\\&\qquad
+ \gamma h^{-\kappa} B(u) - h^\kappa B(\phi) 
+ \gamma h^{-\kappa} \widetilde C(\phi) + 2\gamma h^{-\kappa}\rho(u, \phi),
\\
B(u) &= h \norm{u}_{L^2(\Gamma)}^2, \quad
\widetilde C(\phi) = 
h\norm{\chi h\p_\nu \phi}_{L^2(\Gamma)}^2,
\quad
\rho(u,\phi) = - h(u, \chi h\p_\nu \phi)_{L^2(\Gamma)},
    \end{align*}
where $\kappa \le 0$ and $\gamma \in (0,1)$ are fixed constants.
Here $E$ and $S$ are as in (\ref{def_R})
except that $\Box$ in $S$ is replaced by $P$.

We have $\mathcal R(u,h^{-1}\phi) = 0$ for a smooth solution $(u,\phi)$ to 
(\ref{control_eqs_bd}).
Indeed, 
    \begin{align*}
B(u) + 2 \rho(u, h^{-1}\phi) + \widetilde C(h^{-1}\phi)
= 
h \norm{u - \chi \p_\nu \phi}^2_{L^2(\Gamma)}
= 0,
    \end{align*}
and also $S(u) = S(\phi) = B(\phi) = 0$ and $E(U|_{t=0} - U_0) = E(U|_{t=T}) = 0$.
The equation $d\mathcal L(u, \phi) = 0$ can be written as (\ref{fem})
where the bilinear form $A$ is now given by
    \begin{align*}
A[(u, \phi), (v, \psi)] 
&= 
h^{-\kappa}s(u,v) - h^\kappa s(\phi, \psi)
- h^2 c(\phi, \psi) 
+ h^{-\kappa} \sum_{\tau = 0,T} e(U|_{t=\tau}, V|_{t=\tau}) 
\\&\qquad + \gamma b(u,v) - b(\phi,\psi) - a(v, \phi) - a(u, \psi)
\\&\qquad+ \gamma h^{-\kappa} \tilde c(\phi, \psi) + 2 \gamma h^{-\kappa}\rho(v, \phi) + 2 \gamma h^{-\kappa}\rho(u, \psi).
    \end{align*}
We define the residual norm by
    \begin{align*}
\tnorm{(u,\phi)}^2 = 
h^{-\kappa} (S(u) + B(u)) 
+ h^\kappa (S(\phi) + B(\phi))
+ C(\phi) 
+ h^{-\kappa} \sum_{\tau = 0,T} E(U|_{t=\tau}). 
    \end{align*}
This is indeed a norm on $V_h \times V_h$ as can be seen by following the proof of Lemma \ref{lem_tnorm_is_norm}. Observe that in this case the vanishing boundary conditions on $\Gamma$ are not imposed in the spaces $V_h$ but follow if $B(u) = B(\phi) = 0$.

\begin{lemma}\label{lem_stability_bd}
For all $u,\phi \in H^2(M) + V_h$ there holds
    \begin{align*}
\tnorm{(u,\phi)}^2 \lesssim A[(u,\phi),(u,-\phi)].
    \end{align*}
\end{lemma}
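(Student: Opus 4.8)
The plan is to follow the proof of Lemma~\ref{lem_stability}: evaluate $A$ on the diagonal pair $(v,\psi)=(u,-\phi)$, where $V=(v,\p_t v)=U$, and read off that the terms coupling $u$ and $\phi$ cancel while the remaining terms rebuild $\tnorm{(u,\phi)}^2$ up to a single indefinite remainder. The form $A$ is designed so that the coupling is antisymmetric under $\psi\mapsto-\phi$: since $a$ and $\rho$ are linear in their second argument,
\begin{align*}
-a(u,\phi)-a(u,-\phi)=0, \qquad \rho(u,\phi)+\rho(u,-\phi)=0,
\end{align*}
so the coupling terms $-a(v,\phi)-a(u,\psi)$ and the consistency terms in $\rho$ drop out entirely.

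I would then collect the surviving terms, each of which is the quadratic form underlying one of $s$, $b$, $e$, $c$, $\tilde c$. The control coupling contributes the positive term $+C(\phi)$, while the penalty-consistency coupling gives $\tilde c(\phi,-\phi)=-\widetilde C(\phi)$, so that one is led to
\begin{align*}
A[(u,\phi),(u,-\phi)]
= h^{-\kappa}S(u) + \gamma h^{-\kappa}B(u) + h^\kappa\big(S(\phi)+B(\phi)\big) + C(\phi) + h^{-\kappa}\sum_{\tau=0,T}E(U|_{t=\tau}) - \gamma h^{-\kappa}\widetilde C(\phi).
\end{align*}
Every term here except the last is a nonnegative multiple of the matching contribution to $\tnorm{(u,\phi)}^2$; the only coefficient that is not $1$ is the fixed $\gamma$ in front of $h^{-\kappa}B(u)$, coming from the boundary penalty on $u$ in the regularization, and it only affects the implicit constant.

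The sole genuine step is absorbing the indefinite remainder $-\gamma h^{-\kappa}\widetilde C(\phi)$ into the control term $C(\phi)$. For this I would use that $0\le\chi\le1$ gives $\chi^2\le\chi$ pointwise, hence $\widetilde C(\phi)\le C(\phi)$; together with $h^{-\kappa}\le1$, valid because $\kappa\le0$ and $h\le1$, and $\gamma\in(0,1)$, this yields
\begin{align*}
C(\phi)-\gamma h^{-\kappa}\widetilde C(\phi)\ge C(\phi)-\gamma C(\phi)=(1-\gamma)C(\phi)\ge0.
\end{align*}
Bounding the remaining terms below by their $\tnorm{(u,\phi)}^2$-counterparts, the smallest gain being $\min(\gamma,1-\gamma)$, gives $\tnorm{(u,\phi)}^2\lesssim A[(u,\phi),(u,-\phi)]$ with an implicit constant depending only on $\gamma$. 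I expect this absorption, and the sign bookkeeping ensuring the coupling terms really cancel, to be the only delicate points; the rest is routine. Note that, in contrast to Lemma~\ref{lem_stability}, no smallness of $h$ is needed beyond $h\le1$: here the remainder is controlled by $\gamma<1$ and $\chi^2\le\chi$ rather than by a positive power of $h$, which is why the statement is asserted for all $h$.
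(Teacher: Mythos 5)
Your proposal is correct and follows essentially the same route as the paper's proof: evaluate $A$ on the anti-diagonal $(u,-\phi)$, note that the $a$- and $\rho$-couplings cancel by linearity in the second argument, recover the terms of $\tnorm{(u,\phi)}^2$ plus the single indefinite remainder $-\gamma h^{-\kappa}\widetilde C(\phi)$, and absorb that remainder into $C(\phi)$ using $\kappa\le 0$, $\gamma<1$ and $\chi^2\le\chi$. If anything, your bookkeeping is slightly more careful than the paper's one-line identity, since you correctly observe that the $B(u)$ term carries the harmless coefficient $\gamma$ rather than $1$, and you make explicit why, unlike Lemma \ref{lem_stability}, no smallness of $h$ is required here.
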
 
\begin{proof}
By the definition of $A$, we have
    \begin{align*}
A[(u, \phi),(u, -\phi)] 
=& \tnorm{(u,\phi)}^2 - \gamma h^{-\kappa} \widetilde C(\phi).
    \end{align*}
As $\kappa \le 0$, $\gamma < 1$ and $\chi \le 1$, $\gamma h^{-\kappa} \widetilde C(\phi)$ can be absorbed by $C(\phi)$.
\end{proof}

The previous two lemmas imply that the finite dimensional linear  system (\ref{fem}) has a unique solution,
and thus defines a finite element method. 

\subsection{Error estimates}
Equation (\ref{fem}) defines a finite element method that is 
consistent in the sense that if
smooth enough $u$ and $\phi$ satisfy (\ref{control_eqs}), then (\ref{fem}) holds for $(u,\phi)$. 
This follows from the weak formulation (\ref{weak_form}) of (\ref{control_eqs}) together with the regularization vanishing for $(u,\phi)$.
If smooth enough $u$ and $\phi$ satisfy (\ref{control_eqs_bd})
and if $(u_h, \phi_h) \in V_h^p \times V_h^q$ solves (\ref{fem})
then the Galerkin orthogonality 
\begin{equation}\label{eq:gal_ortho}
A[(u - u_h,h^{-1}\phi - \phi_h),(v,\psi)] = 0 \quad \mbox{ for all } (v,\psi)
\in V_h^p \times V_h^q.
\end{equation}
Analogously to the case of distributed control, 
this is due to $(u, \phi)$ satisfying the weak formulation (\ref{weak_form_bd}) and the regularization vanishing at $(u, h^{-1} \phi)$.

It is straightforward to see that for all $u,\phi,v,\psi \in H^2(M) + V_h$ there holds
    \begin{align*}
A[(u, \phi), (v, \psi)] 
+( a(v, \phi) + a(u, \psi) ) 
\lesssim 
\tnorm{(u,\phi)} \tnorm{(v,\psi)}.
    \end{align*}
We will need the following analogue of Lemma \ref{lem_a_cont}. We omit the proof, this being a modification of the earlier proof. The only difference is that the boundary terms on $\Gamma$ need to be kept track of.  

\begin{lemma}\label{lem_a_cont_bd}
For all $u,\phi,v,\psi \in H^2(M) + V_h$ there holds
    \begin{align*}
a(v, \phi) & \lesssim \left(S^\frac12(\phi) + B(\phi) \right)\norm{v}_{H^2(\mathcal T_h)},
\\
a(u, \psi) & \lesssim \left(S^\frac12(u) + B(u) + \sum_{\tau = 0,T} E^\frac12(U|_{t=\tau})\right) \norm{\psi}_{H^2(\mathcal T_h)}.
    \end{align*} 
\end{lemma}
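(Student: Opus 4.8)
The plan is to follow the proof of Lemma~\ref{lem_a_cont} verbatim, starting from the definition \eqref{def_a2} of $a$ and integrating by parts elementwise via \eqref{int_by_parts}, the one new ingredient being that the boundary contributions on $\Gamma$ no longer vanish (the discrete spaces $V_h$ impose no condition on $\Gamma$) and must be absorbed into the boundary penalty $B$. Throughout I will use Cauchy--Schwarz together with the trace inequality \eqref{trace_cont}, which holds for this family by Lemma~\ref{lem_trace_bd}, in the semiclassical scaling.

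For the first estimate I would move the principal derivatives onto $\phi$. Summing \eqref{int_by_parts} over the elements, the volume term $\int_M g(hdv,hd\phi)\d x$ together with $h^2(v,V\phi)_{L^2(M)}$ assembles into $\sum_{K\in\T}\int_K v\,h^2 P\phi\d x$ (using $P=\Box+V$ and that $V$ is a symmetric multiplier), plus the internal-face jumps $\sum_{F\in\F}h\int_F v\jump{h\p_\nu\phi}\d s$, plus a boundary term $h(v,h\p_\nu\phi)_{L^2(\p M)}$ that cancels exactly the corresponding term in \eqref{def_a2}. This leaves
\[
a(v,\phi) = \sum_{K\in\T}\int_K v\,h^2 P\phi\d x + \sum_{F\in\F} h\int_F v\jump{h\p_\nu\phi}\d s - h(h\p_\nu v,\phi)_{L^2(\Gamma)}.
\]
The first two terms are bounded by $S^\frac12(\phi)\norm{v}_{H^1(\T)}$ exactly as in Lemma~\ref{lem_a_cont}. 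The genuinely new term $-h(h\p_\nu v,\phi)_{L^2(\Gamma)}$ is estimated by $h\norm{h\p_\nu v}_{L^2(\Gamma)}\norm{\phi}_{L^2(\Gamma)}$, where $\norm{\phi}_{L^2(\Gamma)}$ yields the factor $B^\frac12(\phi)$, and controlling $h^\frac12\norm{h\p_\nu v}_{L^2(\Gamma)}$ by the trace inequality forces the $H^2(\T)$ norm on $v$; this is precisely why the test norm is upgraded from $H^1$ (as in Lemma~\ref{lem_a_cont}) to $H^2$.

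For the second estimate I would instead move the derivatives onto $u$. After assembling $Pu$ and the jumps, the surviving boundary integrals are $h(h\p_\nu u,\psi)_{L^2(\p M)}-h(u,h\p_\nu\psi)_{L^2(\p M)}-h(h\p_\nu u,\psi)_{L^2(\Gamma)}$. The $\Gamma$ part of the first cancels the last, leaving contributions on $\p M\setminus\Gamma$ together with a single term $-h(u,h\p_\nu\psi)_{L^2(\Gamma)}$. On $\p M\setminus\Gamma$ the normal derivative is the time derivative, so these terms are controlled by $\sum_{\tau=0,T}E^\frac12(U|_{t=\tau})\norm{\psi}_{H^2(\T)}$ exactly as before, while the remaining $\Gamma$ term produces $B^\frac12(u)\norm{\psi}_{H^2(\T)}$ by the same Cauchy--Schwarz and trace argument. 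Collecting the pieces gives the stated bound.

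The only real difficulty is bookkeeping: one must track the signs of the two distinct boundary integrals appearing in \eqref{def_a2} and verify that, after elementwise integration by parts, exactly the intended $\Gamma$ terms cancel while the survivors are precisely those controlled by the boundary penalty $B^\frac12$ and the data terms $E^\frac12$. Everything beyond this cancellation is the routine Cauchy--Schwarz and semiclassical trace estimate already carried out in Lemma~\ref{lem_a_cont}.
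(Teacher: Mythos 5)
Your proposal is correct and is precisely the argument the paper intends: the paper omits the proof of Lemma~\ref{lem_a_cont_bd}, describing it only as the proof of Lemma~\ref{lem_a_cont} with the boundary terms on $\Gamma$ kept track of, and that is exactly what you carry out --- the elementwise integration by parts, the cancellation of the $\p M$ (respectively $\Gamma$) terms against those in \eqref{def_a2}, the absorption of $V$ into $P$, and the control of the surviving $\Gamma$ integrals by the penalty $B$ via the trace inequality (which forces the $H^2(\mathcal T_h)$ norm on the test function) are all as intended. Note that your derivation naturally produces $B^{\frac12}(\phi)$ and $B^{\frac12}(u)$ rather than $B(\phi)$ and $B(u)$; this is the dimensionally homogeneous form, and the missing exponent $\frac12$ in the lemma's statement appears to be a typo in the paper.
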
  

By repeating the proof of Theorem \ref{th:res_conv} we obtain:

\begin{theorem}\label{th:res_conv_bd}
Suppose that (A') holds.
Let $\kappa \le 0$, $p,q \in \Np$ and let $(u_h, \phi_h)$ in $V^p_h \times V^q_h$ be the solution of \eqref{fem}. Let $u \in H^{p+1}(M)$ and $\phi \in H^{q+1}(M)$ solve \eqref{control_eqs_bd}. Then
\[
\tnorm{(u-u_h,h^{-1}\phi-\phi_h)} 
\lesssim 
h^{p+1-\frac \kappa 2} \|u\|_{H^{p+1}(M)}
+
h^{q+1+\frac \kappa 2} \|h^{-1}\phi\|_{H^{q+1}(M)}.
\]
In particular,
    \begin{align}\label{est_control_bd}
\norm{\chi \p_\nu (\phi-h\phi_h)}_{L^2(\Gamma)} 
\lesssim  
h^{p-\frac \kappa 2 + \frac12} \|u\|_{H^{p+1}(M)}
+
h^{q+\frac \kappa 2 - \frac12} \|\phi\|_{H^{q+1}(M)}.
    \end{align}
\end{theorem}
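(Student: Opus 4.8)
The plan is to mirror the proof of Theorem \ref{th:res_conv} almost verbatim, the only structural differences being that the adjoint variable now carries the scaling $h^{-1}\phi$, that stability comes from Lemma \ref{lem_stability_bd} rather than Lemma \ref{lem_stability}, and that the continuity of $a$ is supplied by Lemma \ref{lem_a_cont_bd}, whose statement already absorbs the extra boundary terms on $\Gamma$. Concretely, I would set $w = u_h - u$ and $\eta = \phi_h - h^{-1}\phi$, together with the interpolation errors $w_h = u_h - i_h^p u$ and $\eta_h = \phi_h - i_h^q(h^{-1}\phi)$, and write $w_i = i_h^p u - u$, $\eta_i = i_h^q(h^{-1}\phi) - h^{-1}\phi$. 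Stability (Lemma \ref{lem_stability_bd}) together with the boundary Galerkin orthogonality \eqref{eq:gal_ortho} then gives $\tnorm{(w,\eta)}^2 \lesssim A[(w,\eta),(w,-\eta)] = A[(w,\eta),(w_i,-\eta_i)]$, exactly as in the distributed case.

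Next I would invoke the continuity of $A$ modulo the $a$-terms (the boundary analogue of \eqref{A_cont}, which carries the opposite sign but is otherwise identical) and Lemma \ref{lem_a_cont_bd} to reduce the right-hand side to $\tnorm{(w_i,\eta_i)}$ plus interpolation-error contributions in $\norm{w_i}_{H^2(\mathcal T_h)}$ and $\norm{\eta_i}_{H^2(\mathcal T_h)}$, weighted by $h^{-\kappa/2}$ and $h^{\kappa/2}$ respectively. The bound $\tnorm{(w_i,\eta_i)} \lesssim h^{-\kappa/2}\norm{w_i}_{H^2(\mathcal T_h)} + h^{\kappa/2}\norm{\eta_i}_{H^2(\mathcal T_h)}$ follows from the definition of the residual norm and the uniform continuity \eqref{bilinear_bound} satisfied by every bilinear form in play, now including the boundary form $b$ and the boundary-supported $c$, $\tilde c$, $\rho$. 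Feeding in the interpolation estimate \eqref{interp2} (in place of \eqref{interp}) and using $\norm{h^{-1}\phi}_{H^{q+1}(M)} = h^{-1}\norm{\phi}_{H^{q+1}(M)}$ yields the asserted bound on $\tnorm{(u - u_h, h^{-1}\phi - \phi_h)}$.

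For the ``in particular'' statement I would extract the boundary control norm directly from the residual norm. By the definition of $c$ we have $C(h^{-1}\phi - \phi_h) = h\norm{\chi^{1/2}\p_\nu(\phi - h\phi_h)}_{L^2(\Gamma)}^2$, and since $0 \le \chi \le 1$ there holds $\chi^2 \le \chi$ pointwise, so $\norm{\chi\p_\nu(\phi - h\phi_h)}_{L^2(\Gamma)}^2 \le h^{-1} C(h^{-1}\phi - \phi_h) \le h^{-1}\tnorm{(u - u_h, h^{-1}\phi - \phi_h)}^2$. Taking square roots produces the factor $h^{-1/2}$, which is precisely the origin of the advertised loss of order $1/2$ relative to the distributed case, and inserting the residual-norm bound (together with $\norm{h^{-1}\phi}_{H^{q+1}(M)} = h^{-1}\norm{\phi}_{H^{q+1}(M)}$) gives \eqref{est_control_bd}.

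I expect the only genuinely delicate point to be the bookkeeping of powers of $h$: the simultaneous presence of the $h^{-1}$ rescaling of $\phi$, the $h^{\pm\kappa/2}$ weights, and the $h^{-1/2}$ conversion from the residual norm to $L^2(\Gamma)$ must be tracked consistently for the exponents in \eqref{est_control_bd} to come out correctly. The geometric and functional-analytic content, in particular the boundary observability estimate underlying the fact that $\tnorm{\cdot}$ is a norm in this setting, is entirely inherited from the cited lemmas, so no new hard estimate is required.
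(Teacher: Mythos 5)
Your proposal is correct and is essentially the paper's own proof: the paper disposes of Theorem \ref{th:res_conv_bd} with the single line ``By repeating the proof of Theorem \ref{th:res_conv} we obtain,'' and your writeup is precisely that repetition, with the right substitutions (Lemma \ref{lem_stability_bd}, Lemma \ref{lem_a_cont_bd}, the boundary Galerkin orthogonality for $(u,h^{-1}\phi)$, and \eqref{interp2}) and correct bookkeeping of the $h^{-1}$ rescaling and the $h^{-1/2}$ loss coming from $C(h^{-1}\phi-\phi_h)=h\norm{\chi^{1/2}\p_\nu(\phi-h\phi_h)}_{L^2(\Gamma)}^2$.
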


As discussed above, 
 if $(u_0,u_1) \in H^{k+1}(\Omega)\times H^{k}(\Omega)$
satisfies the compatibility conditions of order $k$,
then the solution to (\ref{control_eqs_bd}) satisfies 
$$(u,\phi) \in H^{k+1}(M) \times H^{k+2}(M).$$
Hence we can take $\kappa = 0$, $q \le k+1$ and $p=q-1$ in the above theorem, leading to the convergence rate (\ref{conv_intro_bd}) stated in the introduction.

\begin{remark}\label{rem_dist_bd}
A finite element method for the distributed control problem
can be formulated using the spaces $V_h^p$ defined by (\ref{def_Vh})
and the bilinear form $a$ in (\ref{def_a2}).
With these choices replacing $V_h^p$ and $a$ in the Lagrangian in Section \ref{sec_disc}, and with $B(u) - B(\phi)$ added in the regularization $\mathcal R(u,\phi)$ there, 
we obtain a method satisfying the estimates in Theorem \ref{th:res_conv}. This method works for smooth $\Omega$ whenever the geometric control condition (A) holds. We omit proving this, the proof being very similar with those above. 
\end{remark}

\section{Numerical experiments}

We discuss some numerical experiments performed with the Freefem++ package (see \cite{hecht2012}).

We address the distributed and boundary case in the one dimensional case and emphasize the influence of the regularity of the initial condition on the rate of convergence of the finite element method with respect to the size of the discretization.  We use uniform (unstructured) meshes and the cut off functions $\chi_0\in C_0^\infty([0,T])$ and $\chi_1\in C_0^\infty([0,1])$ defined as follows
\begin{equation}
\label{chi_used}
\chi_0(t)=\frac{e^{-\frac{1}{2t}}e^{-\frac{1}{2(T-t)}}}{e^{-\frac{1}{T}}e^{-\frac{1}{T}}}, \qquad \chi_1(x)=\frac{e^{-\frac{1}{5(x-a)}}e^{-\frac{1}{5(b-x)}}}{e^{-\frac{2}{5(b-a)}}e^{-\frac{2}{5(b-a)}}} 1_{[a,b]}(x)
\end{equation}
for any $0<a<b<1$ and $T>0$. In particular, $\chi_0(T/2)=1$  and $\chi_1((a+b)/2)=1$. Figure $\ref{chi}$ depicts the function $\chi_0$ for $T=2.5$.

\begin{figure}[!http]
\begin{center}
\includegraphics[scale=0.4]{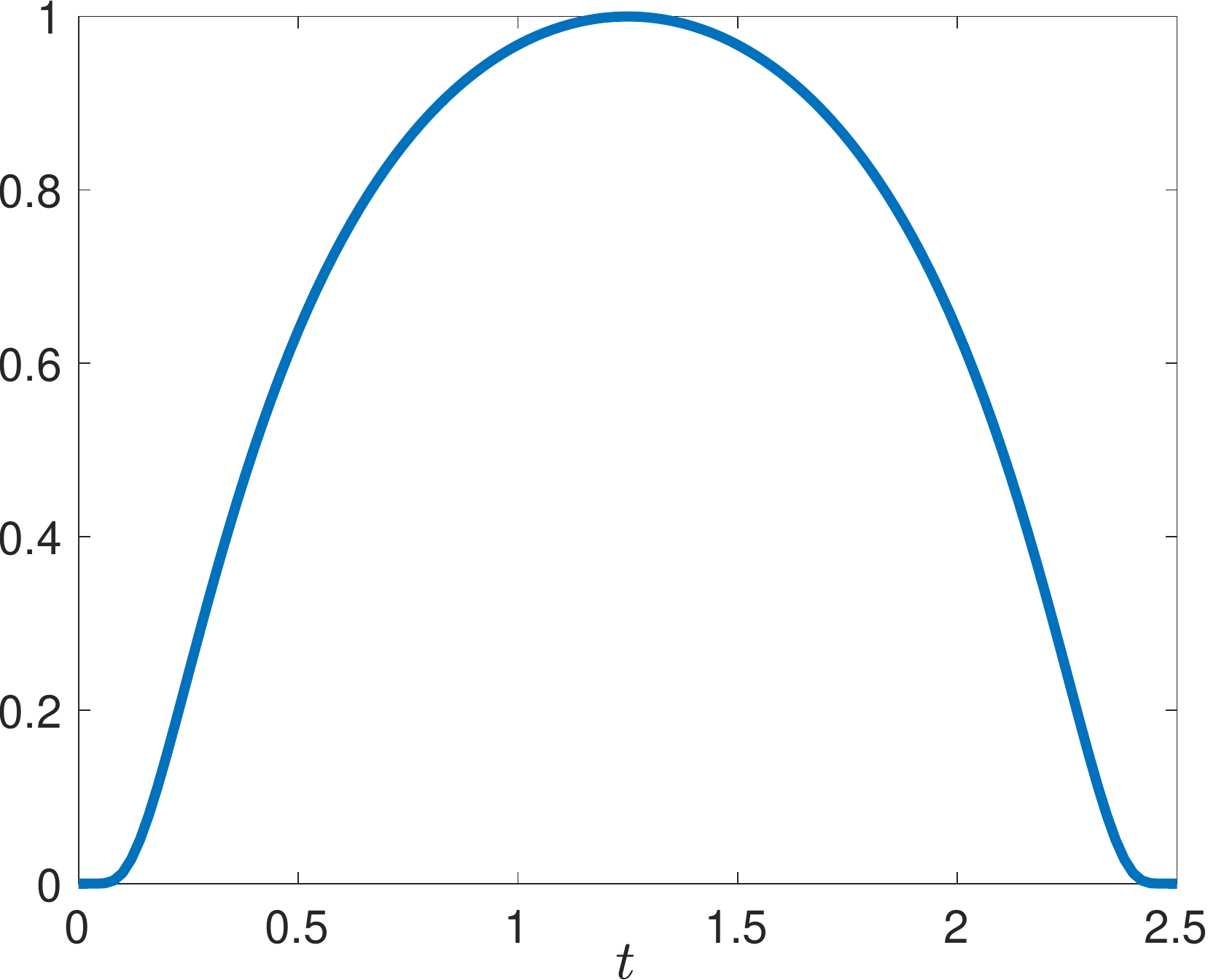}
\caption{The $C_0^\infty([0,T])$ function $t\mapsto \chi_0(t)$, $t\in [0,T]$ with $T=2.5$.}\label{chi}
\end{center}
\end{figure}

\subsection{Distributed case : initial condition in $H^{k+1}(\Omega)\times H^{k}(\Omega)$ for all $k\in \mathbb{N}$}
We consider the simplest situation for which 
\begin{equation}
\label{eq:Ex1}\tag{\bf Ex1}
 (u_0,u_1)=(\sin(\pi x),0)\in H^{k+1}(\Omega)\times H^{k}(\Omega) \quad \forall k\in \mathbb{N}.
\end{equation}
Compatibility conditions \eqref{C1L}-\eqref{C2L} are satisfied for any $j$. Moreover, we use the cut-off functions $\chi_0\in C_0^\infty([0,T])$ and $\chi_1\in C_0^\infty([0,1])$ defined by \eqref{chi_used} with $T=2$, $a=0.1$ and $b=0.4$. The null controllability property (A) holds true for this set of data.
Since explicit solutions are not available in the distributed case, we define as ``exact'' solution $(u,\phi)$ the one of \eqref{fem}  from a fine and structured mesh (composed of $409\ 000$ triangles and $205\ 261$ vertices) corresponding to $h\approx 4.41\times 10^{-3}$   and $(u_h,\phi_h)\in V_h^p\times V_h^q$ with $(p,q)=(3,3)$.

Figure \ref{fig:ex1_IN_avecchi}-left depicts the evolution of the relative error for the variable $\phi$ with respect to the $L^2$-norm
$$
\textrm{err}(\phi,\phi_h,\chi):= \Vert\chi(\phi-\phi_h)\Vert_{L^2(M)}/\Vert\chi \phi\Vert_{L^2(M)}
$$ 
with respect to $h$ for various pairs of $(p,q)$. Table \ref{tab:ex1_IN_CHI} collects the corresponding numerical values. 
We observe the convergence of the approximation w.r.t. $h$. Moreover, the figure exhibits the influence of the space $V_h^q$ used for the variable $\phi_h$ while the choice of the space $V_h^p$ for the variable $u_h$ has no effect on the approximation. We observe rates close to $0.5$, $2$ and $3$ for $(p,q)=(1,1)$, $(p,q)=(2,2)$ and $(p,q)=(3,3)$ respectively, in agreement with Theorem  \ref{th:res_conv}.  For comparison, Figure \ref{fig:ex1_IN_avecchi}-right depicts the evolution of the relative error $\textrm{err}(\phi,\phi_h,\chi)$ for $\chi_0(t)=1$ and $\chi_1(x)=1_{(a,b)}(x)$, i.e. when no regularization of the control support is introduced. Table \ref{tab:ex1_IN_sanschi} collects the corresponding numerical values. The corresponding controlled pair $(u,\phi)$ is a priori only in $C([0,T];H_0^1(\Omega))\times C([0,T];L^2(\Omega))$. Thus, if we still get the convergence with respect to the parameter $h$, we observe that the approximation is not improved beyond the value $q=2$. As before, the choice of the approximation space $V_h^p$ for $u_h$ does not affect the result. The rate is also reduced: for $(p,q)=(2,2)$, the rate is close to $1.5$. This highlights the influence of the cut off functions, including for very smooth initial conditions.  
Table \ref{tab:ex1_P2P2} collects some $L^2$ norms of $u_h$ and $\phi_h$ with respect to $h$ for the pair $(p,q)=(2,2)$: in particular, the relative error $err(u,u_h,1)$ associated with the controlled solution $u$ is order of $h^{2.5}$ for $h$ small enough.

\begin{figure}[!http]
\begin{center}
\includegraphics[scale=0.37]{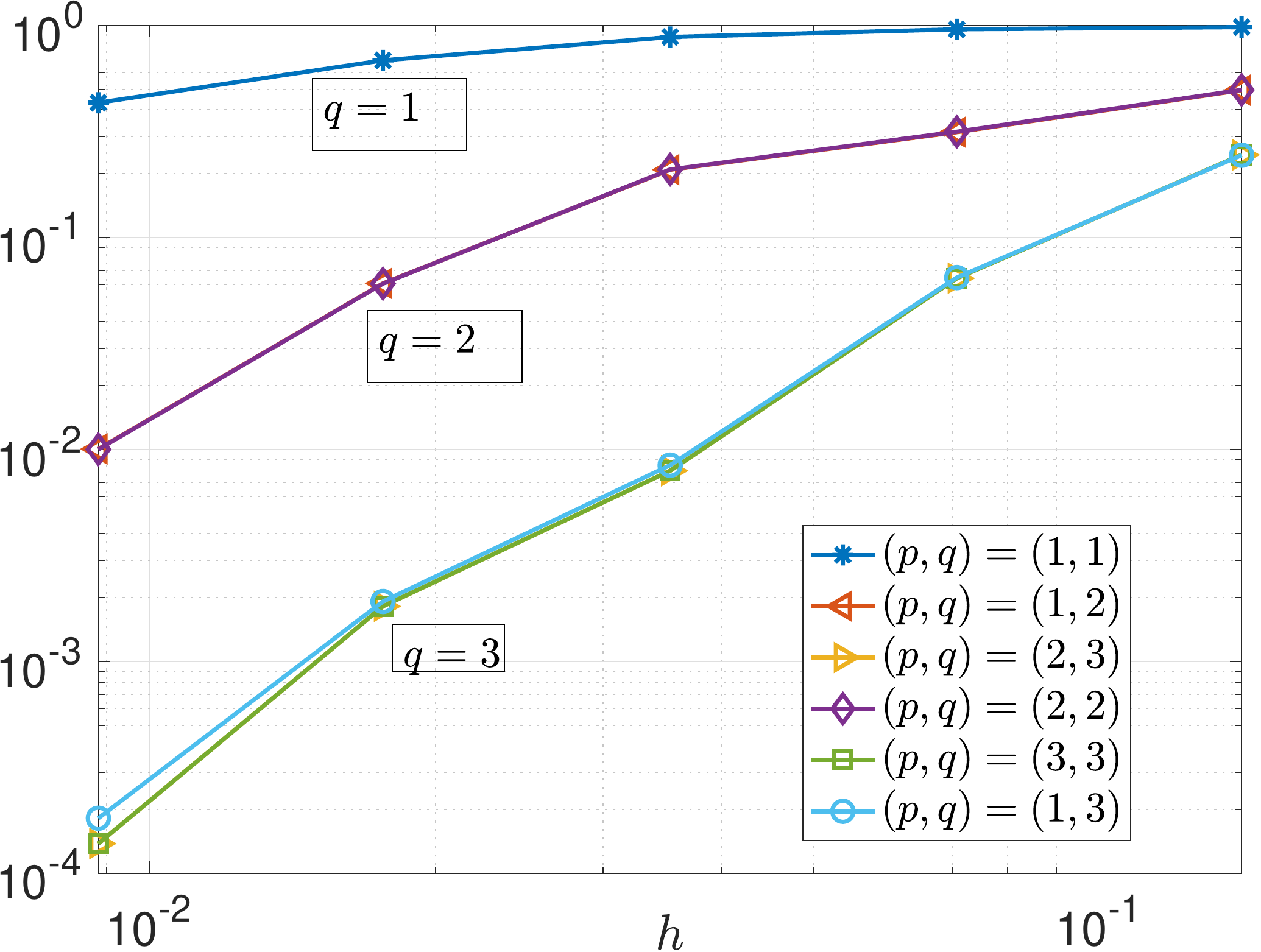}
\includegraphics[scale=0.43]{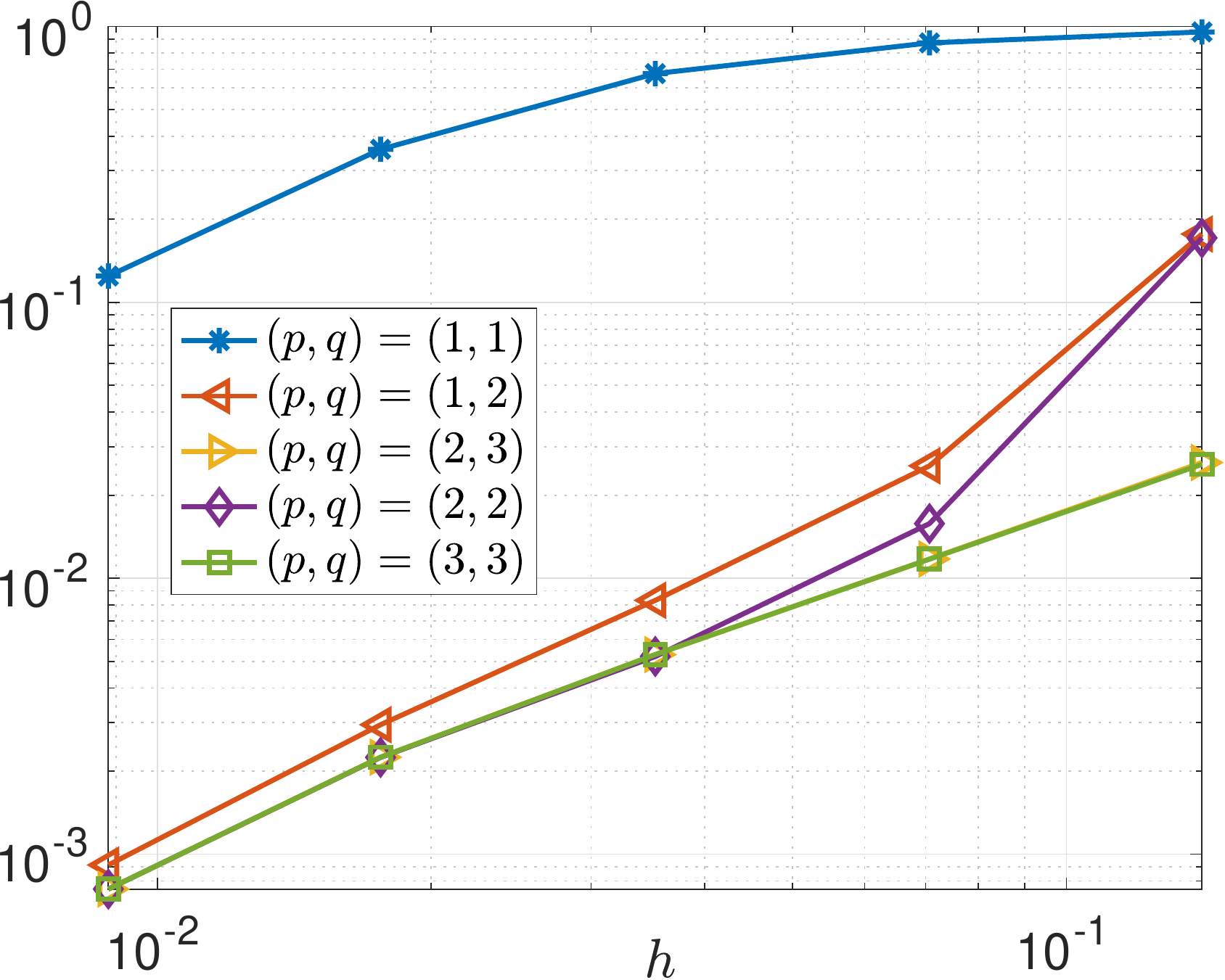}
\caption{\eqref{eq:Ex1}; $\Vert\chi(\phi-\phi_h)\Vert_{L^2(M)}/\Vert\chi \phi\Vert_{L^2(M)}$ vs. $h$; Left: $\chi_0(t)$ and $\chi_1(x)$ given by \eqref{chi_used}; Right: $\chi_0(t)=1$ and $\chi_1(x)=1_{(a,b)}(x)$.}\label{fig:ex1_IN_avecchi}
\end{center}
\end{figure}

\begin{table}[http!]
	\centering
		\begin{tabular}{|l|ccccc|}
			\hline
			 			   $h$  & $1.57\times 10^{-1}$  &      $8.22\times 10^{-2}$  	&  $4.03\times 10^{-2}$ 	&  $2.29\times 10^{-2}$  	&  $1.25\times 10^{-2}$  
						   			\tabularnewline
			\hline
			$(p,q)=(1,1)$ 	  & $9.81\times 10^{-1}$ &	$9.58\times 10^{-1}$ &	$8.81\times 10^{-1}$ &	$6.83\times 10^{-1}$ &	$4.31\times 10^{-1}$			
			\tabularnewline
			$(p,q)=(2,2)$ 	  & $4.96\times 10^{-1}$	& $3.15\times 10^{-1}$ &	$2.09\times 10^{-1}$ 	& $6.05\times 10^{-2}$ & $1.00\times 10^{-2}$
			\tabularnewline
			$(p,q)=(3,3)$ 	  & $2.45\times 10^{-1}$	& $6.41\times 10^{-2}$	& $7.93\times 10^{-3}$ &	$1.81\times 10^{-3}$ &	$1.38\times 10^{-4}$
          		\tabularnewline
			\hline
		\end{tabular}
		\vspace{0.1cm}
	\caption{\eqref{eq:Ex1}; $\Vert \chi(\phi_h-\phi)\Vert_{L^2(M)}/\Vert \chi \phi\Vert_{L^2(M)}$; $\chi$ from \eqref{chi_used}.}
	\label{tab:ex1_IN_CHI}
\end{table}

\begin{table}[http!]
	\centering
		\begin{tabular}{|l|ccccc|}
			\hline
			 			   $h$  & $1.57\times 10^{-1}$  &      $8.22\times 10^{-2}$  	&  $4.03\times 10^{-2}$ 	&  $2.29\times 10^{-2}$  	&  $1.25\times 10^{-2}$  
						   			\tabularnewline
			\hline
			$(p,q)=(1,1)$ 	  & $9.55\times 10^{-1}$  & $8.71\times 10^{-1}$ & $6.74\times 10^{-1}$ & $3.58\times 10^{-1}$ & $1.24\times 10^{-1}$  			
			\tabularnewline
			$(p,q)=(2,2)$ 	  & $1.71\times 10^{-1}$ &	$1.57\times 10^{-2}$ &	$5.20\times 10^{-3}$	& $2.24\times 10^{-3}$ &	$7.47\times 10^{-4}$
			\tabularnewline
			$(p,q)=(3,3)$ 	  & $2.58\times 10^{-2}$ &	$1.17\times 10^{-3}$	 & $5.28\times 10^{-3}$ 	& $2.24\times 10^{-3}$	& $7.47\times 10^{-4}$
			
			\tabularnewline
			\hline
		\end{tabular}
		\vspace{0.1cm}
	\caption{\eqref{eq:Ex1}; $\Vert \chi(\phi_h-\phi)\Vert_{L^2(\mathcal{M})}/\Vert \chi \phi\Vert_{L^2(\mathcal{M})}$ ; $\chi_0(t)=1; \chi_1(x)=1_{(a,b)}(x)$.}
	\label{tab:ex1_IN_sanschi}
\end{table}

\begin{table}[http!]
	\centering
		\begin{tabular}{|l|ccccc|}
			\hline
			 			   $h$  & $1.57\times 10^{-1}$  &      $8.22\times 10^{-2}$  	&  $4.03\times 10^{-2}$ 	&  $2.29\times 10^{-2}$  	&  $1.25\times 10^{-2}$  
						   			\tabularnewline
			\hline
			$\textrm{err}(\phi,\phi_h,1)$ 	  & $6.22\times 10^{-1}$ &      $3.92\times 10^{-1}$ 	& $2.33\times 10^{-1}$	& $6.45\times 10^{-2}$ 		 		& $1.04\times 10^{-2}$	
			\tabularnewline
			$\textrm{err}(\partial_x \phi,\partial_x \phi_h,1)$ 	  & $8.11\times 10^{-1}$ &      $6.84\times 10^{-1}$ 	& $4.85\times 10^{-1}$	& $1.64\times 10^{-1}$ 		 		& $4.57\times 10^{-2}$	
			\tabularnewline
			$\textrm{err}(u,u_h,1)$ 	  & $4.29\times 10^{-1}$ &      $1.36\times 10^{-1}$ 	& $5.00\times 10^{-2}$	& $1.08\times 10^{-2}$ 		 		& $1.10\times 10^{-3}$	
			\tabularnewline
			$\textrm{err}(\phi,\phi_h,\chi)$ 	  & $4.96\times 10^{-1}$ &      $3.15\times 10^{-1}$ 	& $2.09\times 10^{-1}$	& $6.05\times 10^{-2}$ 		 		& $1.00\times 10^{-2}$	
			\tabularnewline
			\hline
		\end{tabular}
		\vspace{0.1cm}
	\caption{\eqref{eq:Ex1};  $(p,q)=(2,2)$ and $\chi$ from \eqref{chi_used}.}
	\label{tab:ex1_P2P2}
\end{table}

\subsection{Distributed case : initial condition in $H^1(\Omega)\times L^2(\Omega)$}
We consider the initial condition 
\begin{equation}
\label{eq:Ex2}\tag{\bf Ex2}
(u_0,u_1)=\big(4x 1_{(0,1/2)}(x)+4(1-x)1_{ [1/2,1)}(x),0\big)\in H^1(\Omega)\times H^{0}(\Omega)
\end{equation}
for which the compatibility conditions \eqref{C1L}-\eqref{C2L} are satisfied for any $j$. If the cut-off functions are introduced, the controlled pair $(u,\phi)$ belongs to $H^1(M)\times H^0(M)$. 
Theorem \ref{th:res_conv} does not provide a convergence rate in this case. The strong convergence is however observed: figure \ref{fig:ex2_IN_CHI} displays the relative error wrt $h$ for $(p,q)=(1,1)$, $(p,q)=(2,2)$ and $(p,q)=(3,3)$ 
with rates close to $1/2$. 
\begin{figure}[!http]
\begin{center}
\hspace*{-0.5cm}
\includegraphics[scale=0.45]{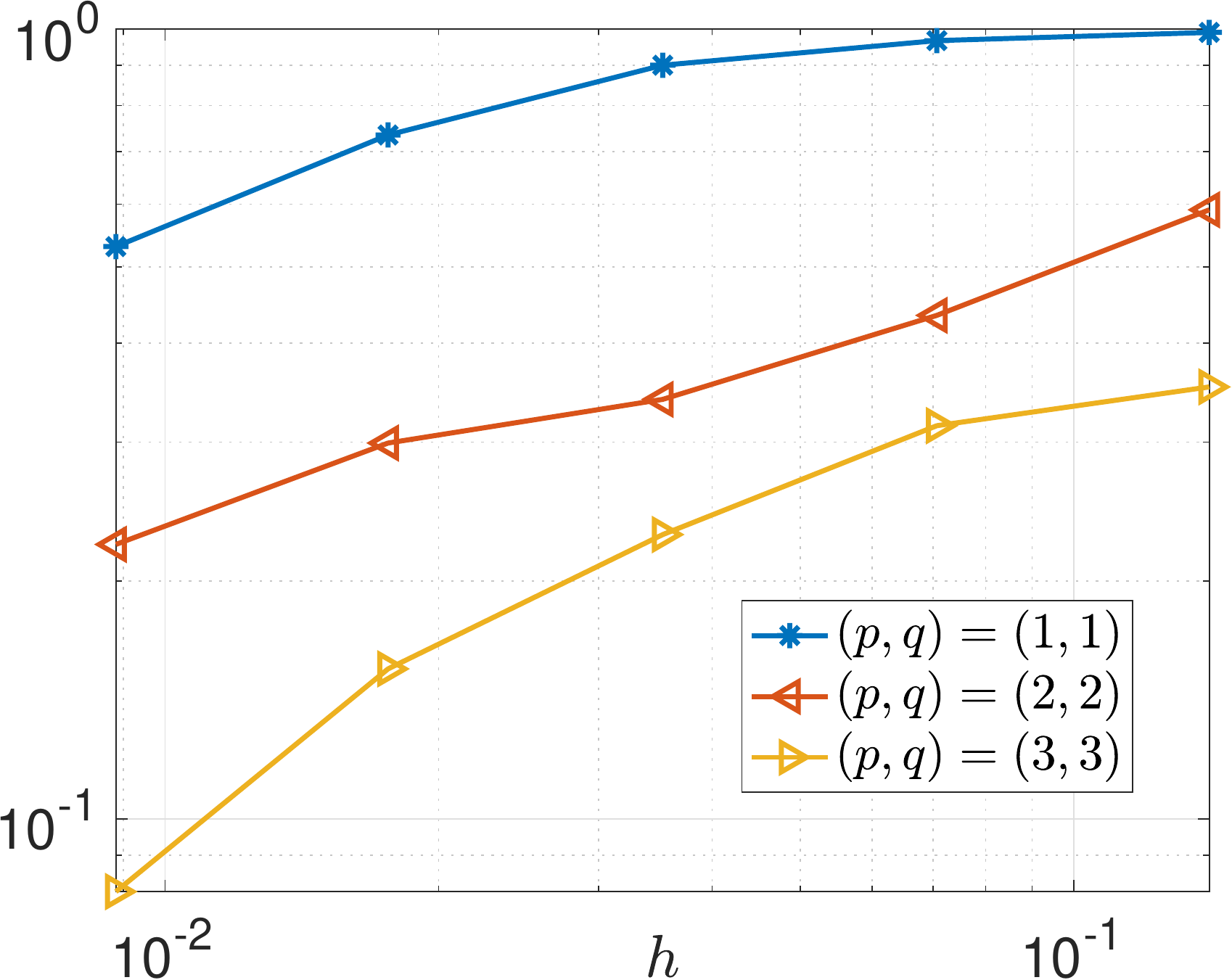}
\includegraphics[scale=0.45]{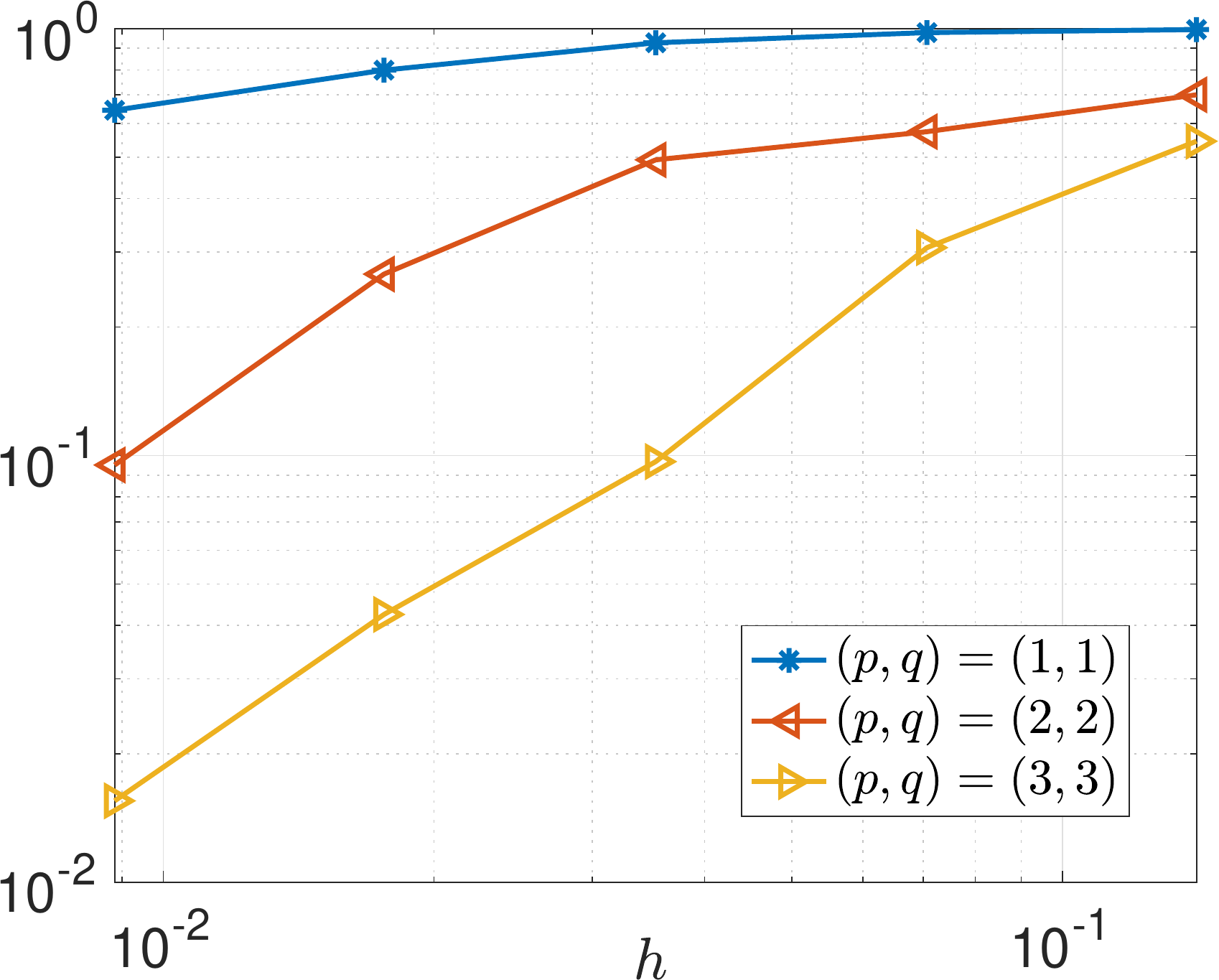}
\caption{\eqref{eq:Ex2} and \eqref{eq:Ex2b} ; $\Vert\chi(\phi-\phi_h)\Vert_{L^2(M)}/\Vert\chi \phi\Vert_{L^2(M)}$ vs. $h$; $\chi$ from \eqref{chi_used}.}\label{fig:ex2_IN_CHI}
\end{center}
\end{figure}

\begin{table}[http!]
	\centering
		\begin{tabular}{|l|ccccc|}
			\hline
			 			   $h$  & $1.57\times 10^{-1}$  &      $8.22\times 10^{-2}$  	&  $4.03\times 10^{-2}$ 	&  $2.29\times 10^{-2}$  	&  $1.25\times 10^{-2}$  
						   			\tabularnewline
			\hline
			$(p,q)=(1,1)$ 	  & $9.89\times 10^{-1}$	& $9.66\times 10^{-1}$	& $8.99\times 10^{-1}$ &	$7.34\times 10^{-1}$ &	$5.30\times 10^{-1}$			
			\tabularnewline
			$(p,q)=(2,2)$ 	  & $5.90\times 10^{-1}$	& $4.31\times 10^{-1}$ &	$3.38\times 10^{-1}$ &	$2.98\times 10^{-1}$ &	$2.22\times 10^{-1}$
			\tabularnewline
			$(p,q)=(3,3)$ 	  & $3.51\times 10^{-1}$	& $3.14\times 10^{-1}$	& $2.28\times 10^{-1}$ &	$1.54\times 10^{-1}$  &	$8.08\times 10^{-2}$
			
			\tabularnewline
			\hline
		\end{tabular}
		\vspace{0.1cm}
	\caption{\eqref{eq:Ex2}; $\Vert \chi(\phi-\phi_h)\Vert_{L^2(M)}/\Vert \chi \phi\Vert_{L^2(M)}$; $\chi$ from \eqref{chi_used}.}
	\label{tab:ex2_IN_CHI}
\end{table}
A similar behavior is observed with the condition $u_0=0$ and $u_1=1_{(0.4,0.6)}(x)$ in $L^2(\Omega)$ in agreement with Theorem \ref{th_rough}

\subsection{Distributed case : initial condition in $H^2(\Omega)\times H^1(\Omega)$}
We consider the initial condition
\begin{equation}
\label{eq:Ex2b}\tag{\bf Ex2b}
 (u_0,u_1)=\big(\rho(x)\int_0^x u_0^2(t)dt,0\big)\in H^2(\Omega)\times H^1(\Omega)
 \end{equation}
where $u_0^2$ is the initial position defined in \eqref{eq:Ex2}  and $\rho\in C^\infty_0(\Omega)$ is introduced in order to preserve the compatibility conditions \eqref{C1L}-\eqref{C2L}. 
Figure \ref{fig:ex2_IN_CHI}-right displays the convergence of the
approximation for $(p,q)=(1,1)$, $(p,q)=(2,2)$ and
$(p,q)=(3,3)$. Theorem \ref{th:res_conv} still does not provide a
convergence rate in this case. However, with respect to the previous example, smaller relative error with rates close to $1$ are observed for $(p,q)=(2,2)$ and $(p,q)=(3,3)$.

\subsection{Boundary case: initial condition in $H^{k+1}(\Omega)\times H^{k}(\Omega)$ for all $k\in \mathbb{N}$}
We consider again the simple situation given by the initial condition \eqref{eq:Ex1}. Compatibility conditions \eqref{C1L}-\eqref{C2L} are satisfied for any $j$. In contrast with the distributed case,  explicit exact solutions are available in the boundary case when cut-off functions are not introduced. Precisely, the corresponding control of minimal $L^2(\Gamma)$ norm with $\Gamma=(0,T=2)\times \{1\}$ is given by $v(t)=\frac{1}{2}\sin(\pi(1-t))=\frac{1}{2}\sin(\pi t)$ leading to $\Vert v\Vert_2=1/2$. The corresponding controlled solution is given by 
\begin{equation}
u(t,x)=\left\{
\begin{array}{cc}
 \frac{1}{2}\big(u_0(x+t)+u_0(x-t)\big) & x+t\leq 1, \\
 \frac{1}{2}u_0(x-t), & x+t>1, \,x-t\geq -1,\\
 0,  & x-t<-1,
 \end{array}
\right.
\end{equation}
leading to $\Vert u\Vert_{L^2(M)}=1/2$. The corresponding adjoint solution is 
given by $\phi(t,x)=-\frac{1}{2\pi}\sin(\pi t)\sin(\pi x)$ leading to $\Vert \phi\Vert_{L^2(M)}=\frac{1}{2\sqrt{2}\pi}$ and $\Vert \partial_x\phi\Vert_{L^2(M)}=\frac{1}{2\sqrt{2}}$.

Tables \ref{tab:ex1_12} and \ref{tab:ex1_23} collects some relative errors w.r.t. $h$ for $(p,q)=(1,2)$ and $(p,q)=(2,3)$ respectively including 
$$
\textrm{err}(v,u_h):=\Vert v-u_h(\cdot,1)\Vert_{L^2(0,T)}/\Vert v\Vert_{L^2(0,T)}, 
$$
and $\textrm{err}(v,h\chi\partial_{\nu}\phi_h)$ while Figure \ref{fig_ex1}-left depicts the relative error on the control w.r.t. $h$ for several pairs of $(p,q)$. 
Since compatibility conditions hold true here, the introduction of the cut off function $\chi\neq 1$ is a priori useless.  However, we observe that the term $\partial_x \phi(\cdot,1)$ is not well approximated near $t=0$ and $t=T$.
This somehow pollutes the approximation $\phi_h$ of $\phi$ inside the domain (precisely along the characteristics intersecting the points $(t,x)=(0,1)$ and $(t,x)=(T,1)$ and affects the optimal rate. We observe rate close to $0.75$ for $(p,q)=(1,1)$ and close to $1.5$ otherwise. Imposing in addition $\phi=0$ on the boundary $\partial\Omega$ slightly improves the approximation.

\begin{table}[http!]
	\centering
		\begin{tabular}{|l|ccccc|}
			\hline
			 			   $h$  & $1.57\times 10^{-1}$  &      $8.22\times 10^{-2}$  	&  $4.03\times 10^{-2}$ 	&  $2.29\times 10^{-2}$  	&  $1.25\times 10^{-2}$  
						   			\tabularnewline
			\hline
			$\textrm{err}(\phi,\phi_h,1)$ 	  & $7.17\times 10^{-1}$ &      $1.97\times 10^{-1}$ 	& $4.54\times 10^{-2}$	& $1.53\times 10^{-2}$ 		 		& $4.50\times 10^{-3}$	
			\tabularnewline
			$\textrm{err}(\partial_x\phi,\partial_x\phi_h,1)$  	  & $9.21\times 10^{-1}$ &      $3.05\times 10^{-1}$ 	& $9.40\times 10^{-2}$	& $5.41\times 10^{-2}$ 		 		& $2.47\times 10^{-2}$	
			\tabularnewline
			$\textrm{err}(u,u_h,1)$ 	  & $1.88\times 10^{-1}$ &      $5.73\times 10^{-2}$ 	& $1.92\times 10^{-2}$	& $9.29\times 10^{-3}$ 		 		& $4.09\times 10^{-3}$	
			\tabularnewline
			$\textrm{err}(v,u_h)$ 	  & $2.16\times 10^{-1}$ &      $5.83\times 10^{-2}$ 	& $1.94\times 10^{-2}$	& $7.21\times 10^{-3}$ 		 		& $3.10\times 10^{-3}$	
			\tabularnewline
			$\textrm{err}(v,h\chi\partial_x\phi_h)$ 	  & $2.71\times 10^{-1}$ &      $8.41\times 10^{-2}$ 	& $2.79\times 10^{-2}$	& $1.22\times 10^{-2}$ 		 		& $5.31\times 10^{-3}$	                
			\tabularnewline
			\hline
		\end{tabular}
		\vspace{0.1cm}
	\caption{\eqref{eq:Ex1} -  Boundary case -  $(p,q)=(1,2)$ - $\chi\equiv1$.}
	\label{tab:ex1_12}
\end{table}

\begin{table}[http!]
	\centering
		\begin{tabular}{|l|ccccc|}
			\hline
			 			   $h$  & $1.57\times 10^{-1}$  &      $8.22\times 10^{-2}$  	&  $4.03\times 10^{-2}$ 	&  $2.29\times 10^{-2}$  	&  $1.25\times 10^{-2}$  
						   			\tabularnewline
			\hline
			$\textrm{err}(\phi,\phi_h,1)$ 	  & $4.09\times 10^{-1}$ &      $1.15\times 10^{-1}$ 	& $2.82\times 10^{-2}$	& $9.29\times 10^{-3}$ 		 		& $2.81\times 10^{-3}$	
			\tabularnewline
			$\textrm{err}(\partial_x\phi,\partial_x\phi_h,1)$ 	  & $6.73\times 10^{-1}$ &      $2.10\times 10^{-1}$ 	& $6.13\times 10^{-2}$	& $2.91\times 10^{-2}$ 		 		& $1.24\times 10^{-2}$	
			\tabularnewline
			$\textrm{err}(u,u_h,1)$ 	  & $7.29\times 10^{-2}$ &      $2.81\times 10^{-2}$ 	& $9.02\times 10^{-3}$	& $3.45\times 10^{-3}$ 		 		& $1.28\times 10^{-3}$	
			\tabularnewline
			$\textrm{err}(v,u_h)$ 	  & $1.00\times 10^{-1}$ &      $3.74\times 10^{-2}$ 	& $1.13\times 10^{-2}$	& $4.16\times 10^{-3}$ 		 		& $1.43\times 10^{-3}$	
			\tabularnewline
			$\textrm{err}(v,h \chi \partial_x\phi_h)$ 	  & $1.05\times 10^{-1}$ &      $3.89\times 10^{-2}$ 	& $1.23\times 10^{-2}$	& $4.61\times 10^{-3}$ 		 		& $1.65\times 10^{-3}$	 
			\tabularnewline
			\hline
		\end{tabular}
		\vspace{0.1cm}
	\caption{\eqref{eq:Ex1} -  Boundary case -  $(p,q)=(2,3)$  - $\chi\equiv 1$.}
	\label{tab:ex1_23}
\end{table}

\begin{figure}[!http]
\begin{center}
\includegraphics[scale=0.45]{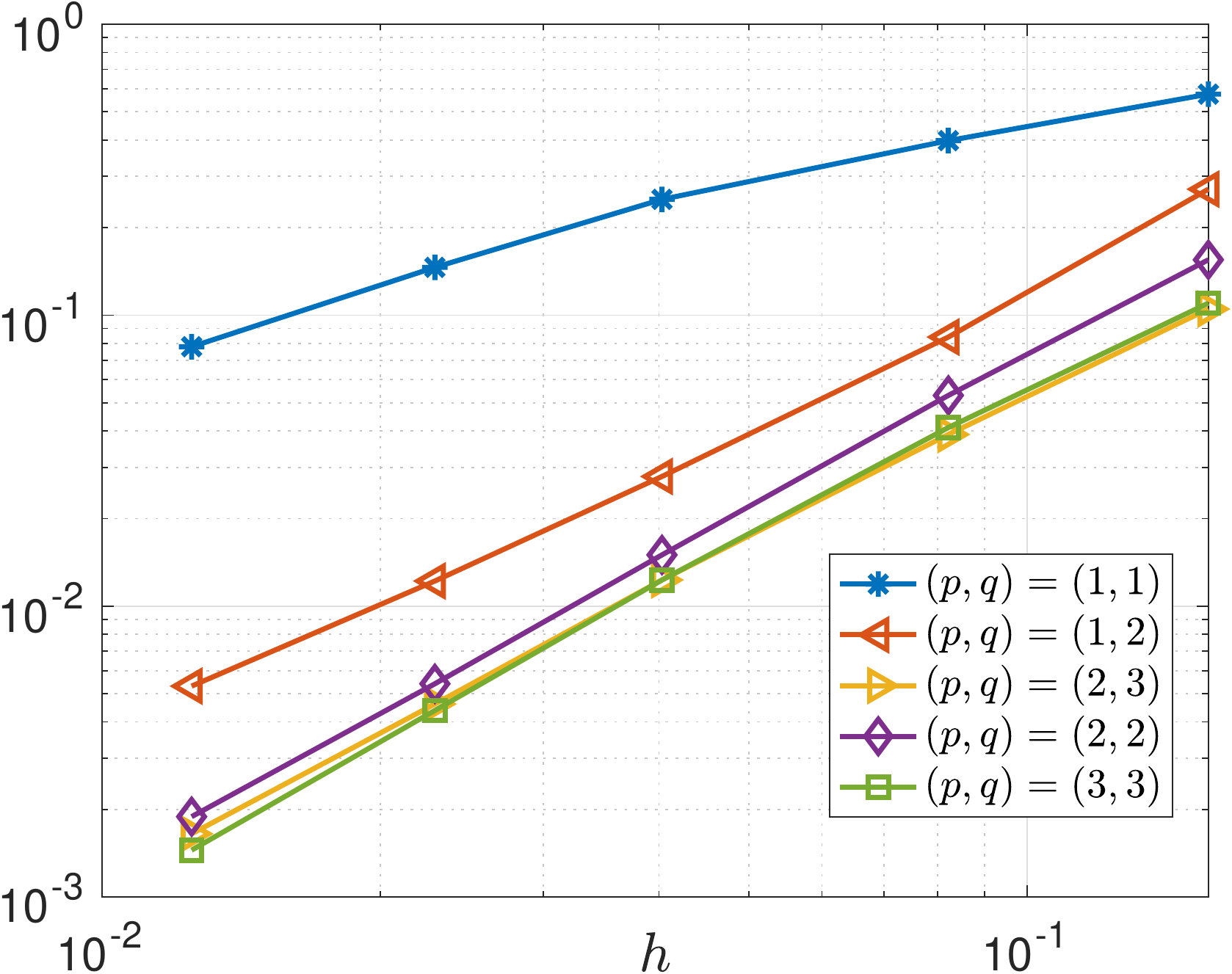}
\includegraphics[scale=0.45]{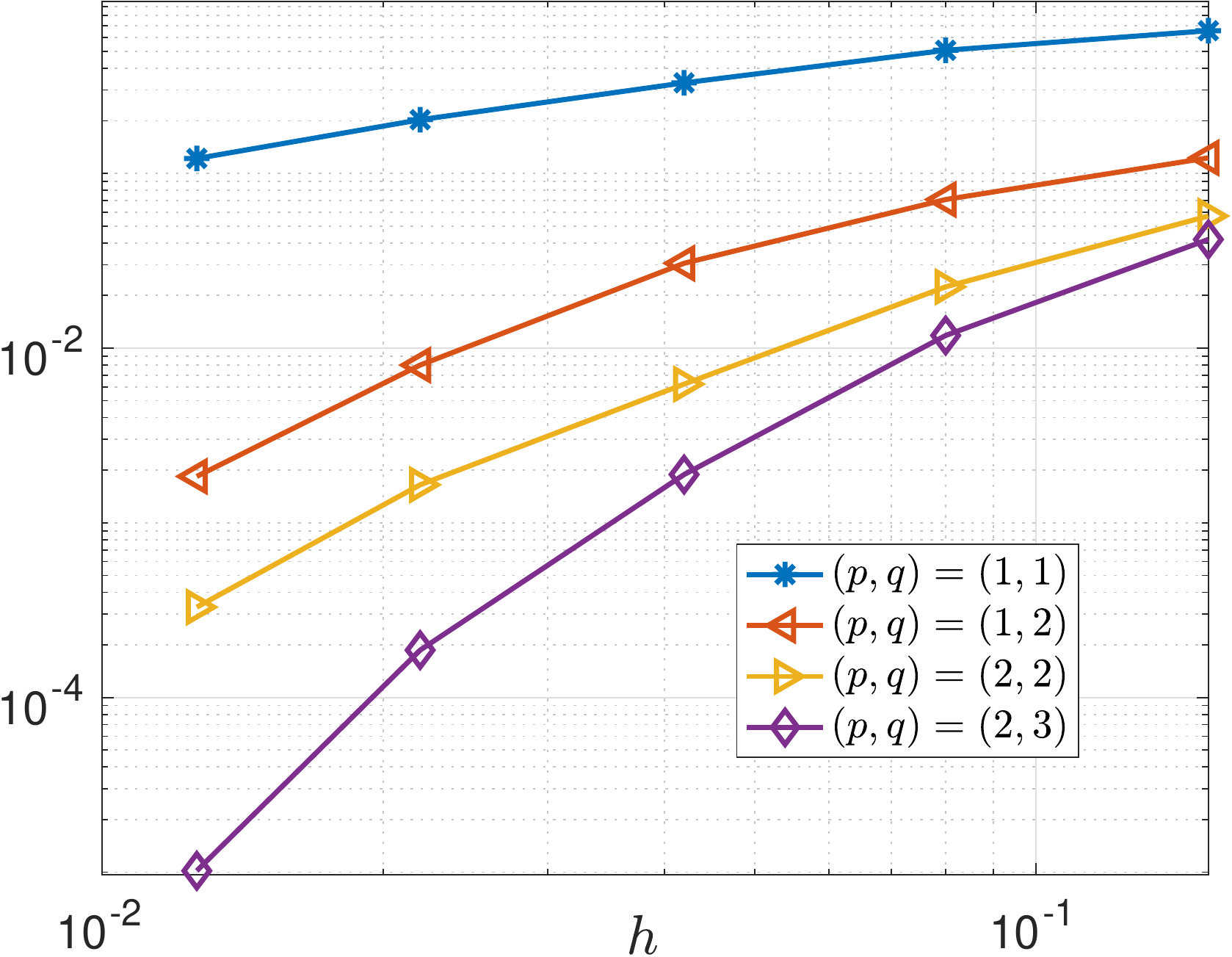}
\caption{\eqref{eq:Ex1} - Boundary case -  $\Vert h\chi\partial_x\phi_h(1,\cdot)-v\Vert_{L^2(0,T)}/\Vert v\Vert_{L^2(0,T)}$ vs. $h$ with $\chi=1$ (left) and $\chi=\chi_0$ from \eqref{chi_used} (right).}\label{fig_ex1}
\end{center}
\end{figure}

With $\chi\neq 1$, the explicit control of minimal $L^2(\chi^{1/2},(0,T))$ norm is not available anymore. As for the distributed controllability, we define as ``exact" control the one obtained from a fine and uniform mesh (composed of $648\ 000$ triangles and $325\ 261$ vertices) corresponding to $h\approx 3.92\times 10^{-3}$ and $(p,q)=(2,3)$. We take $T$ large enough, precisely $T=2.5$, to ensure the null controllability property of the wave equation. Figure \ref{fig_ex1}-right displays the evolution of $\Vert h \chi\partial_x\phi_h(\cdot,1)-v\Vert_{L^2(0,T)}/\Vert v\Vert_{L^2(0,T)}$ w.r.t. $h$.  For $(p,q)=(1,1)$ and $(p,q)=(1,2)$, we observe rates close to $0.5$ and $1.5$ in agreement with \eqref{est_control_bd} of Theorem \ref{th:res_conv_bd} with $\kappa=0$. For $(p,q)=(2,3)$, we observe a rate close to $3$, which is a bit better than the value $2.5$ from \eqref{est_control_bd}. Those results also show that the boundary control can be approximated both from the quantity $h\partial_x \phi_h(\cdot,1)$ obtained from the adjoint dual variable and from the trace $u_h(\cdot,1)$ of the primal variable.

\subsection{Boundary case: initial condition in $H^{1}(\Omega)\times H^{0}(\Omega)$}

We consider the initial data \eqref{eq:Ex2}. The corresponding control
of minimal $L^2(\Gamma)$ norm with $\Gamma= (0,T=2)\times \{1\}$ (corresponding to $\chi\equiv 1$)is given by 
 $$
 v(t)=2t \,1_{(0,1/2)}(t) +2(1-t) 1_{(1/2,3/2)}(t) +2(t-2)\,1_{(3/2,1)}(t).
 $$ 
 The corresponding controlled solution is explicitly known as follows: 
 \begin{equation}
 u(t,x)=\frac{1}{2}u_0(x-t)1_{(t-x\leq 0)}+ \frac{1}{2}u_0(x+t)1_{(t+x\leq 1)}-\frac{1}{2}u_0(t-x)1_{(t-x\geq 0)}1_{(t-x\leq 1)}.
 \end{equation}
 The corresponding adjoint solution is given by $\phi(t,x)+\phi(t,1-x)$ where $\phi$ is defined in \eqref{varphi3} 
 with the following initial conditions
$$
(\phi_0,\phi_1)=\big(0,-2x \, 1_{(0,1/2)}(x) +2(x-1) \, 1_{(1/2,1)}(x)\big)\in H^2(\Omega)\times H^1(\Omega).
$$ 
Compatibility conditions are satisfied.

Figure \ref{fig:ex2_bord}-left displays the evolution of $\Vert \chi\partial_x\phi_h(1,\cdot)-v\Vert_{L^2(0,T)}/\Vert v\Vert_{L^2(0,T)}$ w.r.t. $h$ for various pairs of $(p,q)$ and $\chi\equiv 1$. We observe a rate close to $0.75$ for $(p,q)=(1,1)$ and  close to $1.5$ otherwise. Remark that a priori  $u\in H^1(M)$ and $\phi\in H^2(M)$ so that the choice $(p,q)=(2,3)$ does not lead to a better rate than the choice $(p,q)=(1,2)$. Moreover, as expected, the introduction of the cut off $\chi\neq 1$ does not improve here the rate of convergence: see Figure \ref{fig:ex2_bord}-right where similar rates are observed.


\begin{table}[http!]
	\centering
		\begin{tabular}{|l|ccccc|}
			\hline
			 			   $h$  & $1.57\times 10^{-1}$  &      $8.22\times 10^{-2}$  	&  $4.03\times 10^{-2}$ 	&  $2.29\times 10^{-2}$  	&  $1.25\times 10^{-2}$  
						   			\tabularnewline
			\hline
			$\textrm{err}(\phi,\phi_h,1)$ 	  & $7.83\times 10^{-1}$ &      $2.53\times 10^{-1}$ 	& $5.82\times 10^{-2}$	& $1.97\times 10^{-2}$ 	& $5.64\times 10^{-3}$	
			\tabularnewline
			$\textrm{err}(\partial_x\phi,\partial_x\phi_h,1)$ 	  & $1.12\times 10^{0}$ &      $5.03\times 10^{-1}$ 	& $2.02\times 10^{-1}$	& $1.12\times 10^{-1}$ 	& $5.04\times 10^{-2}$	
			\tabularnewline
			$\textrm{err}(u,u_h,1)$ 	  & $1.91\times 10^{-1}$ &      $4.94\times 10^{-2}$ 	& $2.42\times 10^{-2}$	& $1.17\times 10^{-2}$ 	& $5.04\times 10^{-3}$	
			\tabularnewline
			$\textrm{err}(v,u_h)$ 	  & $2.20\times 10^{-1}$ &      $6.48\times 10^{-2}$ 	& $2.87\times 10^{-2}$	& $1.16\times 10^{-2}$  & $4.95\times 10^{-3}$	
			\tabularnewline
			$\textrm{err}(v,h \chi \partial_{x}\phi_h)$ 	  & $2.49\times 10^{-1}$ &      $6.41\times 10^{-2}$ 	& $2.97\times 10^{-2}$	& $1.40\times 10^{-2}$  & $6.06\times 10^{-3}$	
			\tabularnewline
			\hline
		\end{tabular}
		\vspace{0.1cm}
	\caption{\eqref{eq:Ex2} - Boundary case - $(p,q)=(1,2)$; $\chi \equiv 1$ }
	\label{tab:ex2_12}
\end{table}


\begin{table}[http!]
	\centering
		\begin{tabular}{|l|ccccc|}
			\hline
			 			   $h$  & $1.57\times 10^{-1}$  &      $8.22\times 10^{-2}$  	&  $4.03\times 10^{-2}$ 	&  $2.29\times 10^{-2}$  	&  $1.25\times 10^{-2}$  
						   			\tabularnewline
			\hline
			$\textrm{err}(\phi,\phi_h,1)$ 	  & $4.90\times 10^{-1}$ &      $1.35\times 10^{-1}$ 	& $3.28\times 10^{-2}$	& $1.07\times 10^{-2}$ 	& $3.41\times 10^{-3}$	
			\tabularnewline
			$\textrm{err}(\partial_x\phi,\partial_x\phi_h,1)$ 	  & $9.79\times 10^{-1}$ &      $3.75\times 10^{-1}$ 	& $1.35\times 10^{-1}$	& $6.76\times 10^{-2}$ 	& $3.21\times 10^{-2}$	
			\tabularnewline
			$\textrm{err}(u,u_h,1)$ 	  & $6.70\times 10^{-2}$ &      $2.60\times 10^{-2}$ 	& $9.34\times 10^{-3}$	& $3.78\times 10^{-3}$ 	& $1.44\times 10^{-3}$	
			\tabularnewline
			$\textrm{err}(v,u_h)$ 	  & $8.61\times 10^{-2}$ &      $3.25\times 10^{-2}$ 	& $1.10\times 10^{-2}$	& $4.19\times 10^{-3}$  & $1.59\times 10^{-3}$	
			\tabularnewline
			$\textrm{err}(v,h \chi \partial_{x}\phi_h)$ 	  & $8.57\times 10^{-2}$ &      $3.23\times 10^{-2}$ 	& $1.10\times 10^{-2}$	& $4.45\times 10^{-3}$  & $1.71\times 10^{-3}$	
			\tabularnewline
			\hline
		\end{tabular}
		\vspace{0.1cm}
	\caption{\eqref{eq:Ex2} - Boundary case - $(p,q)=(2,3)$; $\chi \equiv 1$ }
	\label{tab:ex2_23}
\end{table}

\begin{figure}[!http]
\begin{center}
\includegraphics[scale=0.45]{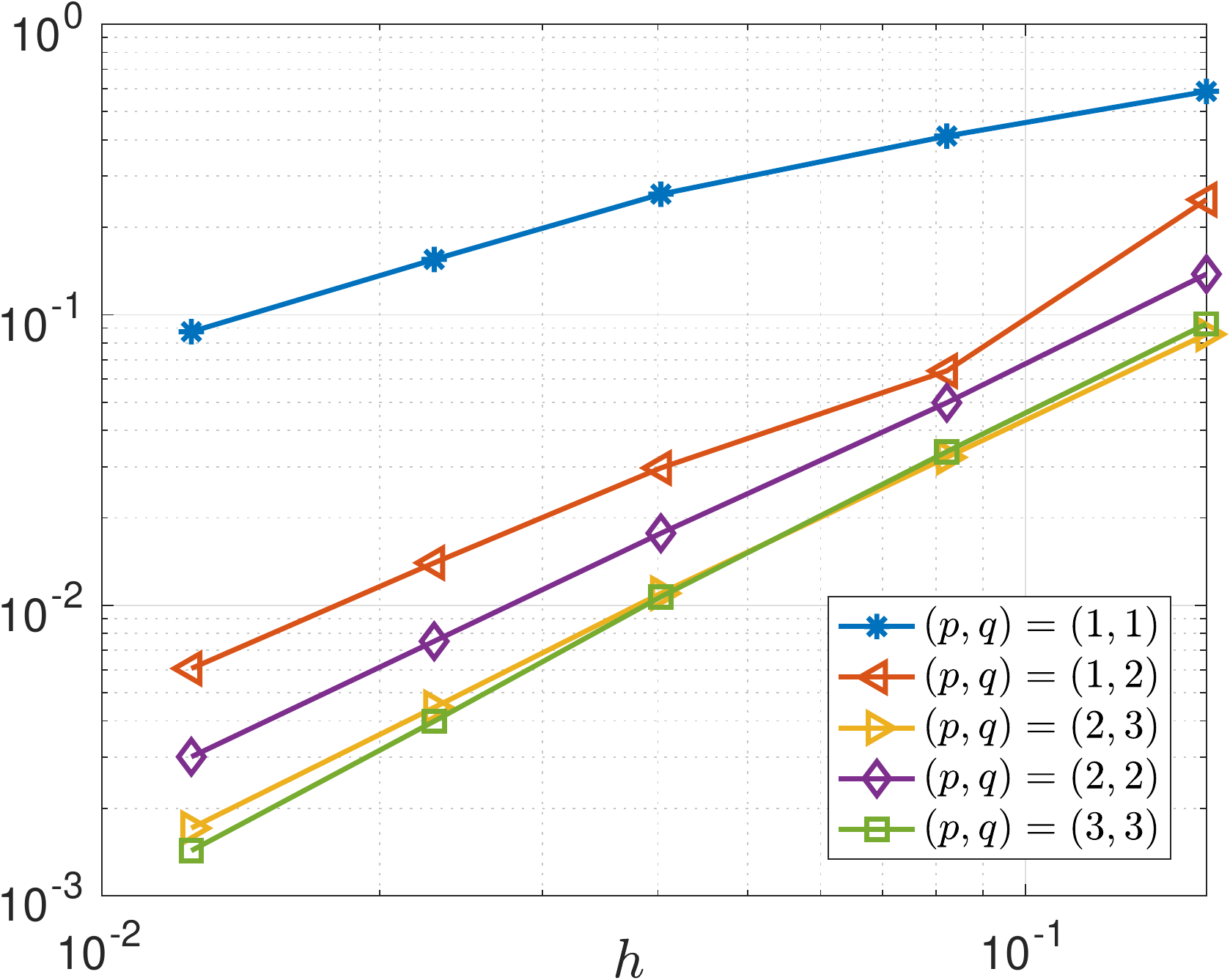}
\includegraphics[scale=0.45]{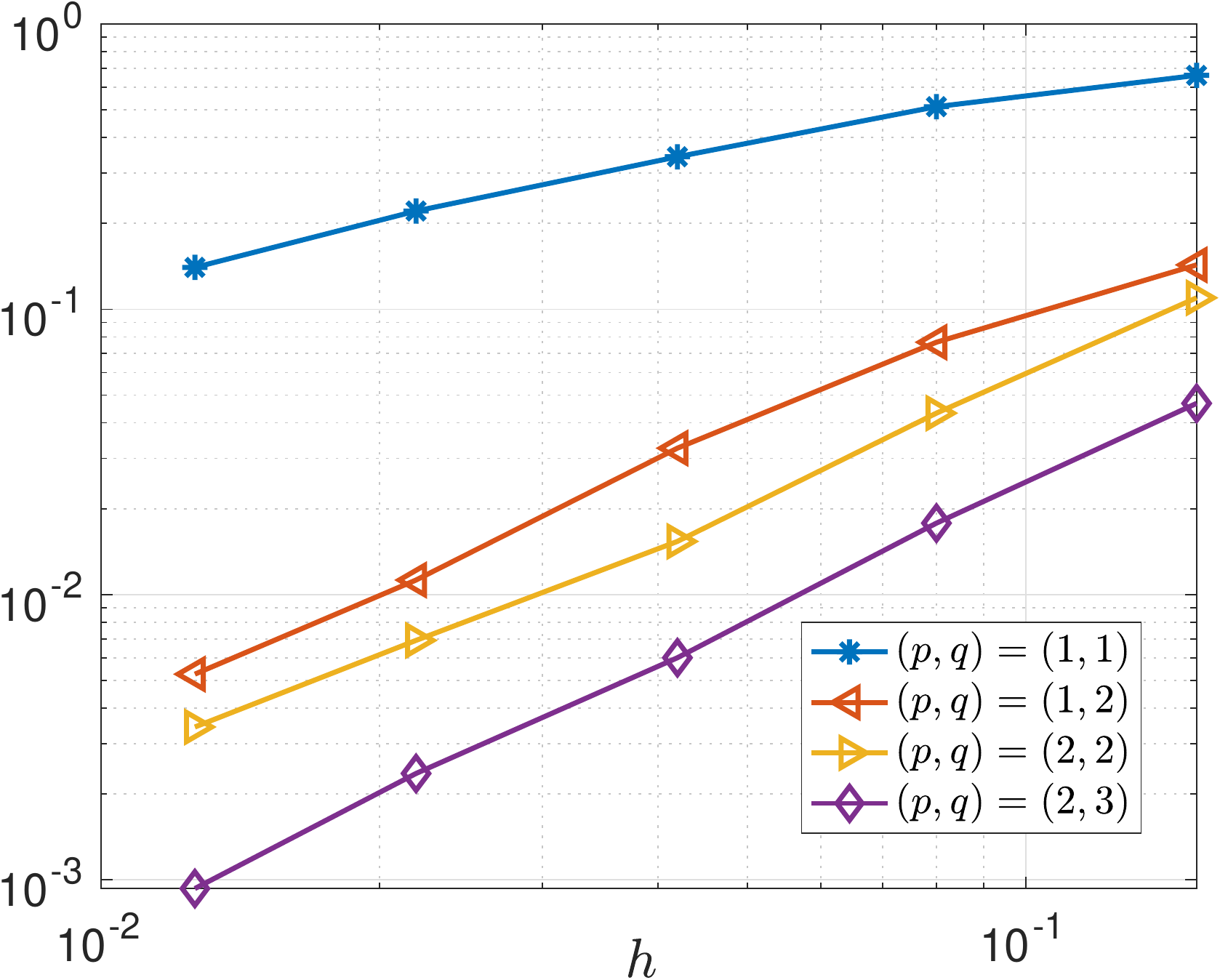}
\caption{\eqref{eq:Ex2} - Boundary case -  $\Vert h\partial_{x}\chi\phi_h(1,\cdot)-v\Vert_{L^2(0,T)}/\Vert v\Vert_{L^2(0,T)}$ vs. $h$ with $\chi=1$ (left) and $\chi=\chi_0$ from \eqref{chi_used} (right).}\label{fig:ex2_bord}
\end{center}
\end{figure}

\subsection{Boundary case: initial condition in $H^{0}(\Omega)\times H^{-1}(\Omega)$}

We consider the following stiff situation given by 
\begin{equation}
\label{eq:Ex3}\tag{\bf Ex3}
(u_0,u_1)=(4x 1_{(0,1/2)}(x),0)\in H^0(\Omega)\times H^{-1}(\Omega).
\end{equation}
and extensively discussed in \cite{cindea_munch_calcolo2015,munch_m2an2005} and $T=2$.
 The corresponding control of minimal $L^2((0,T)\times \{1\})$ norm is given by $v(t)=2(1-t) 1_{(1/2,3/2)}(t)$ leading to $\Vert v\Vert_{L^2(0,T)}=1/\sqrt{3}$. The corresponding controlled solution is explicitly known as follows: 
\begin{equation}
u(t,x)=\left\{
\begin{array}{cc}
 4x & 0\leq x+t <\frac{1}{2}, \\
 2(x-t) &  -\frac{1}{2}< t-x < \frac{1}{2}, \quad x+t\geq \frac{1}{2},\\
 0 & else,
\end{array}
\right.
\end{equation}
leading to $\Vert u\Vert_{L^2(M)}=1/\sqrt{3}$. The corresponding initial conditions of the adjoint solution is $(\phi_0,\phi_1)=(0,-2x \, 1_{(0,1/2)}(x))\in H^1(\Omega)\times H^0(\Omega)$ leading to 
\begin{equation}
\label{varphi3}
\phi(t,x)=\left\{
\begin{array}{cc}
 -2xt & 0\leq x+t <\frac{1}{2}, \quad x\geq 0, t\geq 0,\\
 \frac{(x-t)^2}{2}-\frac{1}{8} &  \frac{1}{2}\leq x+t < \frac{3}{2}, \quad -\frac{1}{2}<x-t< \frac{1}{2},\\
 2(x-1)(1-t) & \frac{3}{2}\leq x+t, \quad -\frac{1}{2}<x-t,\\
  -\frac{(x+t-2)^2}{2}+\frac{1}{8} &  \frac{3}{2}< x+t < \frac{5}{2}, \quad -\frac{3}{2}<x-t\leq  -\frac{1}{2},\\
   2x(2-t) &  x-t\leq -\frac{3}{2},
\end{array}
\right.
\end{equation}
leading to $\Vert \phi\Vert_{L^2(M)}\approx 9.86\times 10^{2}$ and  $\Vert \partial_x\phi\Vert_{L^2(M)}\approx 4.08\times 10^{-1}$. In particular, we check that $\partial_{x} \phi(t,x)_{\vert x=1}=2(1-t)\, 1_{(1/2,3/2)}(t)=v(t).$ Both $u$ and $\phi$ develop singularities (where $u$ and $\nabla \phi$ are discontinuous).

Figure \ref{fig:ex3} depicts the evolution of $\Vert h\chi\partial_{x}\phi_h(\cdot,1)-v\Vert_{L^2(0,T)}/\Vert v\Vert_{L^2(0,T)}$ w.r.t. $h$ with $\chi\equiv1$.  We observe a rate close to $0.5$.

  Let us also emphasize that the space-time discretization formulation is very well appropriated for mesh adaptivity.
   Using the $V_h^1\times V_h^2$ approximation, Figure~\ref{adaptmesh_ex3}-left (resp.~right) depicts the mesh obtained after seven adaptative refinements based on the local values of gradient of the variable $\phi_h$ (resp.~$u_h$).
   Starting with a coarse mesh composed of~$288$ triangles and~$166$ vertices, the final mesh on the right is composed with $13068$~triangles and $6700$~vertices and leads to a relative error $\textrm{err}(v,u_h)$ of the order of $10^{-3}$. The final mesh follows the singularities of the controlled solution starting at the point $(0,1)$ of discontinuity of $u_0$.

\begin{table}[http!]
	\centering
		\begin{tabular}{|l|ccccc|}
			\hline
			 			   $h$  & $1.57\times 10^{-1}$  &      $8.22\times 10^{-2}$  	&  $4.03\times 10^{-2}$ 	&  $2.29\times 10^{-2}$  	&  $1.25\times 10^{-2}$  
						   			\tabularnewline
			\hline
			$\textrm{err}(\phi,\phi_h,1)$ 	  & $6.38\times 10^{-1}$ &      $4.35\times 10^{-1}$ 	& $2.85\times 10^{-1}$	& $1.84\times 10^{-1}$ 	& $1.05\times 10^{-1}$	
			\tabularnewline
			$\textrm{err}(\partial_x\phi,\partial_x\phi_h,1)$ 	  & $8.38\times 10^{-1}$ &      $6.23\times 10^{-1}$ 	& $4.85\times 10^{-1}$	& $3.97\times 10^{-1}$ 	& $3.17\times 10^{-1}$	
			\tabularnewline
			$\textrm{err}(u,u_h,1)$ 	  & $5.78\times 10^{-1}$ &      $4.40\times 10^{-1}$ 	& $3.59\times 10^{-1}$	& $2.93\times 10^{-1}$ 	& $2.27\times 10^{-1}$	
			\tabularnewline
			$\textrm{err}(v,u_h)$ 	  & $7.67\times 10^{-1}$ &      $5.69\times 10^{-1}$ 	& $5.04\times 10^{-1}$	& $4.05\times 10^{-1}$  & $3.15\times 10^{-1}$	
			\tabularnewline
			$\textrm{err}(v,h \chi \partial_x\phi_h)$ 	  & $8.41\times 10^{-1}$ &      $6.47\times 10^{-1}$ 	& $5.08\times 10^{-1}$	& $4.09\times 10^{-1}$  & $3.16\times 10^{-1}$	
			\tabularnewline
			\hline
		\end{tabular}
		\vspace{0.1cm}
	\caption{\eqref{eq:Ex3}; $(p,q)=(1,1)$  - $\chi\equiv 1$ - Boundary case}
	\label{tab:ex3_11}
\end{table}

\begin{table}[http!]
	\centering
		\begin{tabular}{|l|ccccc|}
			\hline
			 			   $h$  & $1.57\times 10^{-1}$  &      $8.22\times 10^{-2}$  	&  $4.03\times 10^{-2}$ 	&  $2.29\times 10^{-2}$  	&  $1.25\times 10^{-2}$  
						   			\tabularnewline
			\hline
			$\textrm{err}(\phi,\phi_h,1)$ 	  & $1.62\times 10^{0}$ &      $6.33\times 10^{-1}$ 	& $2.72\times 10^{-1}$	& $1.45\times 10^{-1}$ 	& $7.36\times 10^{-2}$	
			\tabularnewline
			$\textrm{err}(\partial_x\phi,\partial_x\phi_h,1)$ 	  & $2.33\times 10^{0}$ &      $1.52\times 10^{0}$ 	& $1.22\times 10^{0}$	& $1.08\times 10^{0}$ 	& $1.05\times 10^{0}$	
			\tabularnewline
			$\textrm{err}(u,u_h,1)$	  & $3.93\times 10^{-1}$ &      $3.00\times 10^{-1}$ 	& $2.27\times 10^{-1}$	& $1.74\times 10^{-1}$ 	& $1.30\times 10^{-1}$	
			\tabularnewline
			$\textrm{err}(v,u_h)$ 	  & $5.03\times 10^{-1}$ &      $3.43\times 10^{-1}$ 	& $2.41\times 10^{-1}$	& $1.89\times 10^{-1}$  & $1.48\times 10^{-1}$	
			\tabularnewline
			$\textrm{err}(v,h \chi \partial_x\phi_h)$ 	  & $4.73\times 10^{-1}$ &      $3.41\times 10^{-1}$ 	& $2.64\times 10^{-1}$	& $2.08\times 10^{-1}$  & $1.60\times 10^{-1}$	
			\tabularnewline
			\hline
		\end{tabular}
		\vspace{0.1cm}
	\caption{\eqref{eq:Ex3}; $(p,q)=(1,2)$ - $\chi\equiv 1$ - Boundary case}
	\label{tab:ex3_12}
\end{table}

\begin{table}[http!]
	\centering
		\begin{tabular}{|l|ccccc|}
			\hline
			 			   $h$  & $1.57\times 10^{-1}$  &      $8.22\times 10^{-2}$  	&  $4.03\times 10^{-2}$ 	&  $2.29\times 10^{-2}$  	&  $1.25\times 10^{-2}$  
						   			\tabularnewline
			\hline
			$\textrm{err}(\phi,\phi_h,1)$ 	  & $1.25\times 10^{0}$ &      $5.01\times 10^{-1}$ 	& $2.02\times 10^{-1}$	& $9.58\times 10^{-2}$ 	& $4.52\times 10^{-2}$	
			\tabularnewline
			$\textrm{err}(\partial_x\phi,\partial_x\phi_h,1)$ 	  & $3.44\times 10^{0}$ &      $2.34\times 10^{0}$ 	& $1.88\times 10^{0}$	& $1.57\times 10^{0}$ 	& $1.45\times 10^{0}$	
			\tabularnewline
			$\textrm{err}(u,u_h,1)$	  & $3.12\times 10^{-1}$ &      $2.22\times 10^{-1}$ 	& $1.60\times 10^{-1}$	& $1.24\times 10^{-1}$ 	& $9.01\times 10^{-2}$	
			\tabularnewline
			$\textrm{err}(v,u_h)$ 	  & $3.52\times 10^{-1}$ &      $2.36\times 10^{-1}$ 	& $1.71\times 10^{-1}$	& $1.30\times 10^{-1}$  & $9.64\times 10^{-2}$	
			\tabularnewline
			$\textrm{err}(v,h \chi \partial_x\phi_h)$ 	  & $3.40\times 10^{-1}$ &      $2.36\times 10^{-1}$ 	& $1.66\times 10^{-1}$	& $1.37\times 10^{-1}$  & $9.56\times 10^{-2}$	
			\tabularnewline
			\hline
		\end{tabular}
		\vspace{0.1cm}
	\caption{\eqref{eq:Ex3}; $(p,q)=(2,3)$ - $\chi\equiv1$ - Boundary case}
	\label{tab:ex3_23}
\end{table}

\begin{figure}[!http]
\begin{center}
\includegraphics[scale=0.55]{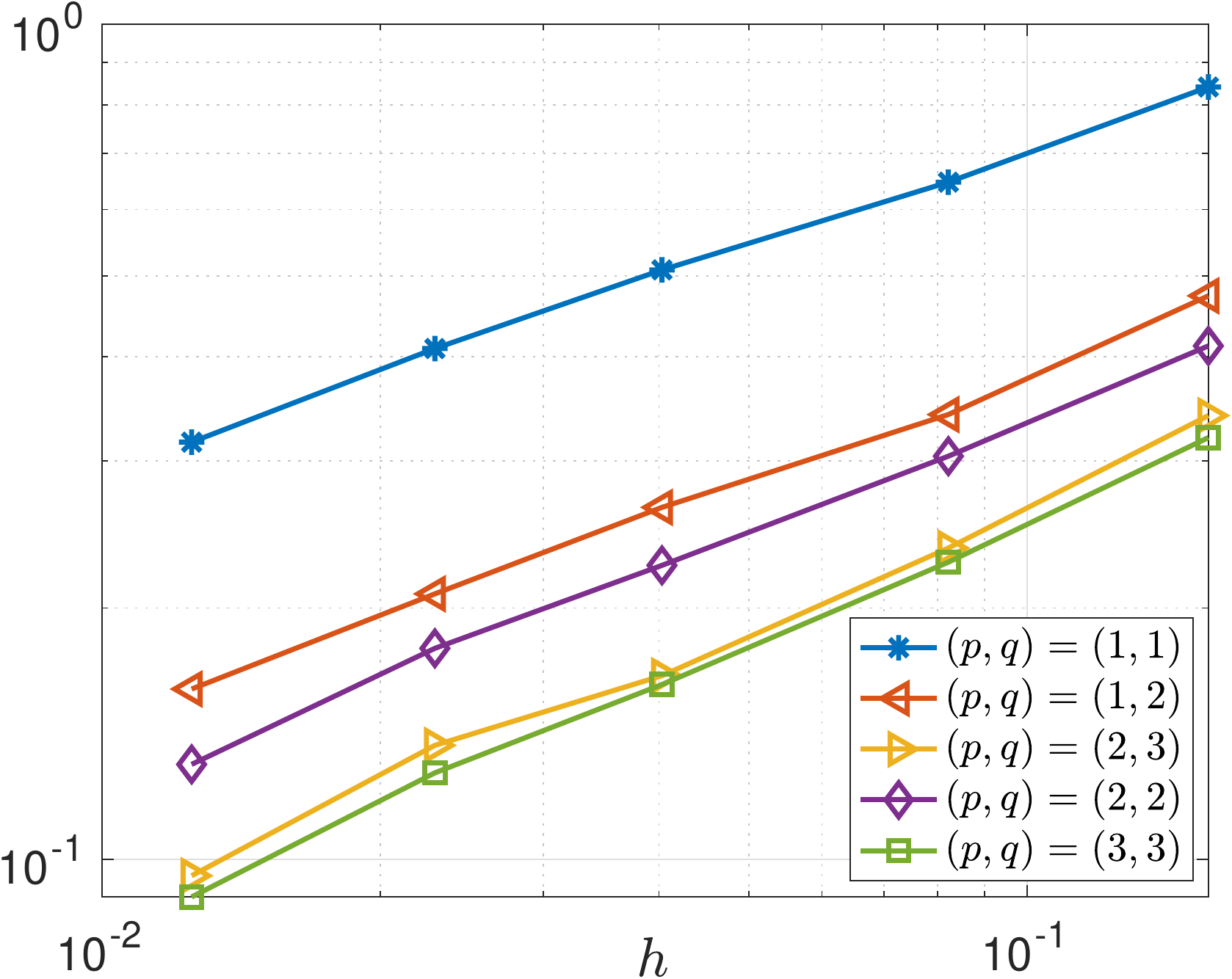}
\caption{\eqref{eq:Ex3}; $\Vert h\chi\partial_{x}\phi_h(\cdot,1)-v\Vert_{L^2(0,T)}/\Vert v\Vert_{L^2(0,T)}$ w.r.t. $h$ (rate $\approx 0.5$).}\label{fig:ex3}
\end{center}
\end{figure}

\begin{figure}[!http]
\begin{center}
\includegraphics[scale=0.7]{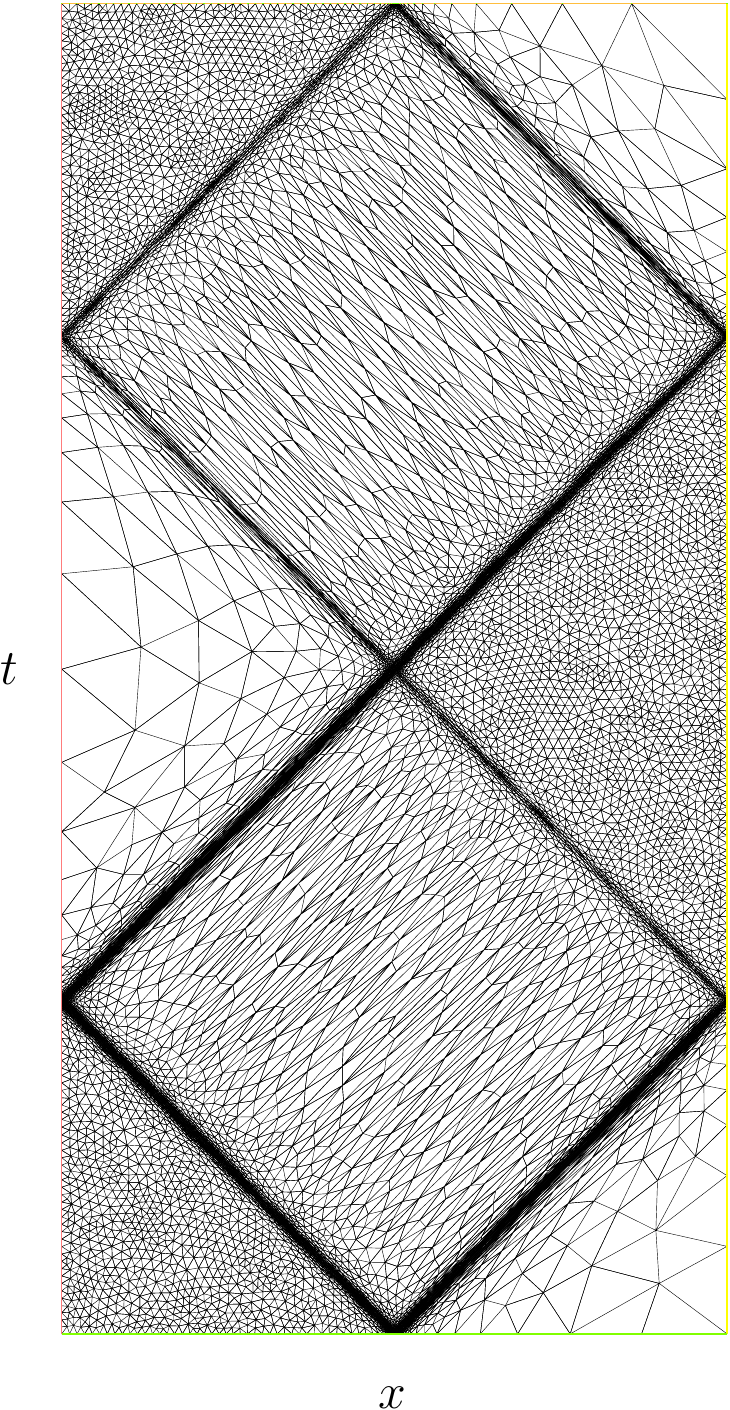}\hspace*{1.5cm}
\includegraphics[scale=0.7]{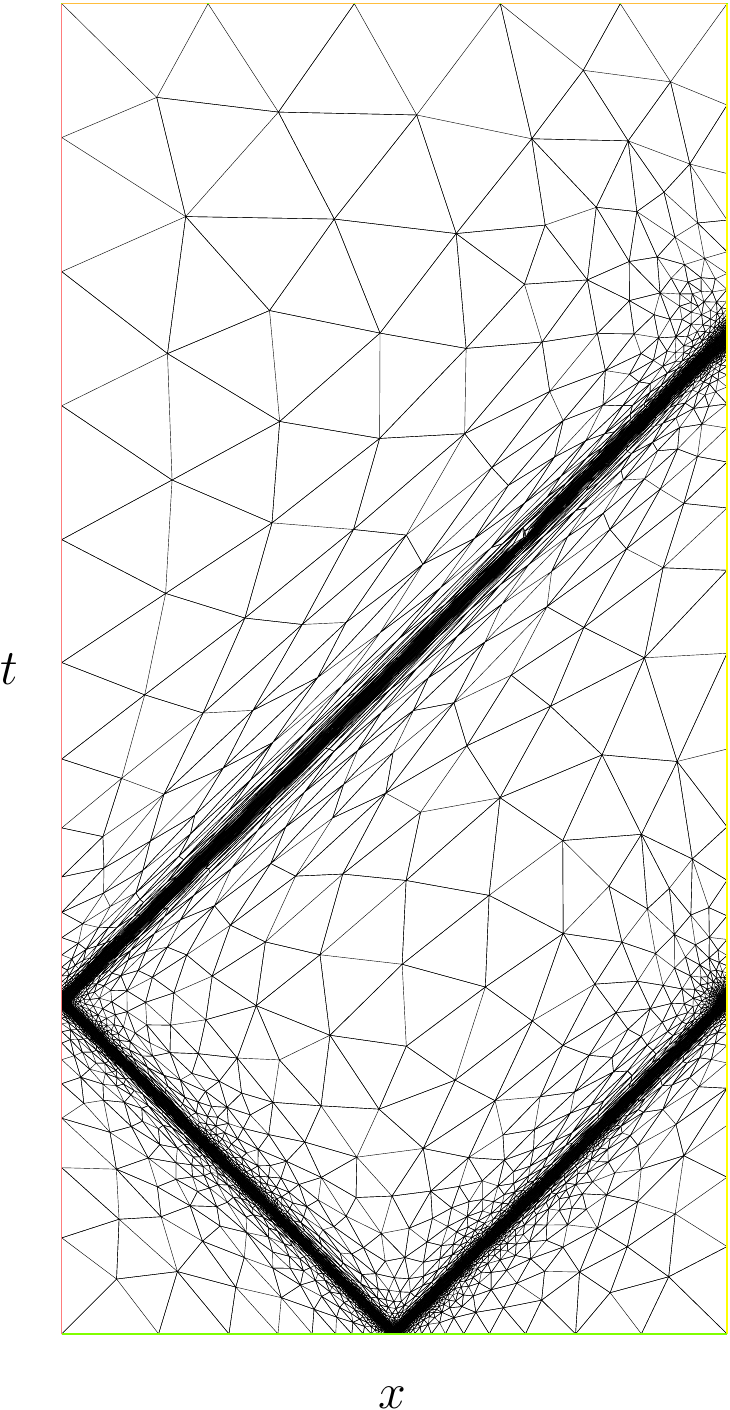}
\caption{\eqref{eq:Ex3};   Locally refine space-time meshes with respect to $\phi_h$ (left) and $u_h$ (right). $(p,q)=(1,2)$}\label{adaptmesh_ex3}
\end{center}
\end{figure}

\subsection{Boundary case: the wave equation with a potential}
To end these numerical illustrations, we report some results for the wave equation with non vanishing potential $V$, see \eqref{def_P}. Non zero potentials notably appear from linearization of nonlinear wave equations of the form $\Box u+ f(u)=\chi v$ (see \cite{munch2020constructive}). Actually, we want to emphasize that this spacetime approach, based on the resolution of the optimal condition associated with the control of minimal $L^2$ norm is very relevant for potential with the ``bad'' sign for which $V(t,x)u(t,x)<0$.
Indeed, in this case, the usual ``\`a la Glowinski'' strategy developed in \cite{Glowinski_Japan_1990} is numerically inefficient and requires adaptations, since the uncontrolled solution (used to initialize the conjugate algorithm) grows exponentially in time, leading to numerical instabilities and overflow. Recall that the observability constant behaves like  
$e^{C(T,\omega)\Vert A\Vert^2_{L^\infty(0,T; L^n(\Omega))}}$ (see \cite{XuZhang_2000}) and appears notably in the constant in the a priori estimate \eqref{est_control_bd}. We consider the initial condition \eqref{eq:Ex1}, $T=2.5$ and constant negative potentials $V(t,x)=V<0$. Table~\ref{tab:ex1_12_V} collects the relative error on the approximation of the boundary control with respect to $h$ for several negatives values of $V$. In particular, for $V=-40$, the $L^\infty$ norm of the corresponding uncontrolled solution is of order $10^5$. We approximate $u$ and $\phi$ in $V_h^1$ and $V_h^2$ respectively and observe a rate close to $1.5$. The value of $V$ only affects the constant. We refer to \cite{Micu_2020} for a semi-discrete (in space) approximation of exact boundary controls for a semi-discretized wave equation with potential, including experiments for small and potentials with good sign. 

\begin{table}[http!]
	\centering
		\begin{tabular}{|l|ccccc|}
			\hline
			 			   $h$  & $1.6\times 10^{-1}$  &      $8\times 10^{-2}$  	&  $4\times 10^{-2}$ 	&  $2\times 10^{-2}$  	&  $1\times 10^{-2}$  
						   			\tabularnewline
			\hline
			$V=-10$ 	  & $7.11\times 10^{-1}$ &      $2.43\times 10^{-1}$ 	& $6.91\times 10^{-2}$	& $2.48\times 10^{-2}$ 		 		& $7.97\times 10^{-3}$	
			\tabularnewline
			$V=-20$  	  & $7.06\times 10^{-1}$ &      $2.81\times 10^{-1}$ 	& $1.57\times 10^{-1}$	& $6.13\times 10^{-2}$ 		 		& $1.65\times 10^{-2}$	
			\tabularnewline
			$V=-30$ 	  & $9.54\times 10^{-1}$ &      $6.91\times 10^{-1}$ 	& $2.07\times 10^{-1}$	& $7.21\times 10^{-2}$ 		 		& $2.27\times 10^{-2}$	
			\tabularnewline
			$V=-40$ 	  & $1.01\times 10^{-1}$ &      $9.44\times 10^{-1}$ 	& $5.05\times 10^{-1}$	& $1.13\times 10^{-1}$ 		 		& $3.18\times 10^{-2}$	
			\tabularnewline
			\hline
		\end{tabular}
		\vspace{0.1cm}
	\caption{\eqref{eq:Ex1} -  Boundary case -  $(p,q)=(1,2)$ - $\chi$ from \eqref{chi_used};  $\Vert h\chi\partial_{x}\phi_h(\cdot,1)-v\Vert_{L^2(0,T)}/\Vert v\Vert_{L^2(0,T)}$  w.r.t. $h$ and $V\in \{-10,-20,-30,-40\}$. }
	\label{tab:ex1_12_V}
\end{table}

\section{Conclusion}

We have introduced and analyzed  a spacetime finite element approximation of a controllability problem for the wave equation. Based on a non conformal $H^1$-approximation, the analysis yields error estimates for the control in the natural $L^2$ -norm of order $h^q$ (resp. $h^{q-\frac{1}{2}}$)  where $q$ is the degree of the polynomials used to describe  the adjoint variable in the distributed (resp. boundary) case. The numerical experiments performed for initial data with various regularity exhibits the efficiency method. The convergence is also observed for initial data with minimal regularity.  

We emphasize that spacetime formulations are easier to implement than time-marching methods, since in particular, there is no kind of CFL condition between the time and space discretization parameters. Moreover, as shown in the numerical section, they are well-suited for mesh adaptivity (as initially discussed in \cite{hughes_spacetime_1990}).

Similarly to the formulation proposed in \cite{cindea_efc_munch_2013, cindea_munch_calcolo2015}, the present formulation follows the ``control then discretize'' approach. However, contrary to \cite{cindea_efc_munch_2013, cindea_munch_calcolo2015}, the $H^1$-formulation of the present work does not require the introduction of sophisticated finite element spaces. On the other hand, the formulation requires additional stabilized terms which are function of the jump of the gradient across the boundary of each element. The analysis is then inspired from  \cite{BFO_Sicon2020}  and also from \cite{BFMO_2021}
where an analogous spacetime formulation for a data assimilation is considered. 

The implementation of the stabilized terms is not straightforward, in particular, in higher dimension, and is usually not available in finite element softwares. A possible way to circumvent the introduction of the gradient jump terms is to consider non-conforming approximation of the Crouzeix-Raviart type as in \cite{Bur17}. A penalty is then needed on the solution jump instead to control the $H^1$-conformity error.
Another possible way, following \cite{montaner_munch_2019} devoted to the boundary case, could be to consider the controllability problem associated to a first order reformulation of the wave equation: 
\begin{equation}\label{Intro:first_order_formulation}
\begin{cases}
v_t-div\, \pp =0,\\
\pp_t-\nabla v =0.
\end{cases}
\end{equation}
with  $v:=u_t$ and $\pp:=\nabla u$. A $H^1$ conformal stabilized
approximation is employed in \cite{montaner_munch_2019} leading to
promising numerical experiments in the one dimensional case. A
rigorous numerical analysis however remains to be done.

\appendix

\appendix

\section{Continuum estimates}
\label{appendix_cont}

\begin{proposition}[Energy estimate]
\label{prop_energy}
There holds
    \begin{align}\label{direct0}
&\norm{u}_{L^\infty(0,T; L^2(\Omega))}
+
\norm{\p_t u}_{L^2(0,T; H^{-1}(\Omega))}
+
\norm{\p_\nu u}_{H^{-1}((0,T) \times \p \Omega)}
\\\notag&\quad\lesssim 
\norm{u|_{t=0}}_{L^2(\Omega)}
+
\norm{\p_t u|_{t=0}}_{H^{-1}(\Omega)}
+
\norm{u}_{L^2((0,T) \times \p \Omega)}
+
\norm{\Box u}_{H^{-1}((0,T) \times \Omega)}.
    \end{align}
\end{proposition}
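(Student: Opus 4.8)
The plan is to prove the estimate by duality (transposition), the engine being the energy estimate together with the hidden boundary regularity for the homogeneous Dirichlet problem, which I would quote from \cite{LLT}. Since the three quantities on the left are all dual norms, I would bound each by pairing $u$ (or one of its traces) against the solution $w$ of a suitable backward adjoint problem and then invoke the adjoint estimate. It suffices to establish the inequality for smooth $u$ and to pass to the general case by regularization, all the norms on the right being then finite.

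For the adjoint problem: given $\varphi$, let $w$ solve $\Box w=\varphi$ in $M$, with $w|_\Gamma=0$ and $w|_{t=T}=\p_t w|_{t=T}=0$. By the standard energy estimate and the hidden regularity of \cite{LLT} one has $w\in C(0,T;H^1_0(\Omega))\cap C^1(0,T;L^2(\Omega))$ and $\p_\nu w|_\Gamma\in L^2(\Gamma)$, with
\[
\norm{w}_{C(H^1_0)}+\norm{\p_t w}_{C(L^2)}+\norm{\p_\nu w}_{L^2(\Gamma)}\lesssim\norm{\varphi}_{L^1(0,T;L^2(\Omega))};
\]
in particular $\norm{w}_{H^1(M)}\lesssim\norm{\varphi}_{L^1(0,T;L^2(\Omega))}$. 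Applying the integration-by-parts identity \eqref{int_by_parts} twice and using the conditions on $w$, for smooth $u$ one obtains
\[
(u,\varphi)_{L^2(M)}=\pair{\Box u,w}+(\p_t u|_{t=0},w|_{t=0})-(u|_{t=0},\p_t w|_{t=0})-(u,\p_\nu w)_{L^2(\Gamma)}.
\]
To bound $\norm{u}_{L^\infty(0,T;L^2(\Omega))}=\sup\{(u,\varphi):\norm{\varphi}_{L^1(L^2)}\le1\}$ I would estimate the four terms on the right by $\norm{\p_t u|_{t=0}}_{H^{-1}}\norm{w|_{t=0}}_{H^1_0}$, by $\norm{u|_{t=0}}_{L^2}\norm{\p_t w|_{t=0}}_{L^2}$, by $\norm{u}_{L^2(\Gamma)}\norm{\p_\nu w}_{L^2(\Gamma)}$ (this is where hidden regularity enters), and by $\norm{\Box u}_{H^{-1}(M)}\norm{w}_{H^1(M)}$; each factor carrying $w$ is $\lesssim\norm{\varphi}_{L^1(L^2)}$, so the supremum yields the first term.

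The remaining two outputs follow by the same scheme. The transposition solution determined by the identity above lies in $C(0,T;L^2(\Omega))\cap C^1(0,T;H^{-1}(\Omega))$ with norm controlled by the right-hand side; the $L^\infty(L^2)$ bound is as above, while $\norm{\p_t u}_{L^2(H^{-1})}\le\sqrt{T}\,\norm{\p_t u}_{L^\infty(H^{-1})}$ follows from the $C^1(H^{-1})$ regularity, obtained by the analogous duality pairing $\p_t u|_{t=s}$ against $H^1_0(\Omega)$. For $\norm{\p_\nu u}_{H^{-1}(\Gamma)}$ I would test $\p_\nu u$ against a boundary function $\mu$, using an adjoint problem with inhomogeneous Dirichlet datum $w|_\Gamma=\mu$ and zero source and final data; Green's formula then isolates $\int_\Gamma\p_\nu u\,\mu\d s$, and the adjoint estimate for boundary data closes the bound.

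The main obstacle is the source term $\pair{\Box u,w}$. The adjoint solution $w$ vanishes on $\Gamma$ and at $t=T$ but not at $t=0$, so it is not in $H^1_0(M)$, and controlling this pairing by $\norm{\Box u}_{H^{-1}((0,T)\times\Omega)}$ is delicate: if the negative norm were taken fully isotropically the estimate could fail, since a source concentrating at $t=0$ can be made arbitrarily small in such a norm while forcing the solution at unit size, yet registering as zero initial velocity. The point to make precise is therefore that the relevant pairing is against functions vanishing on the lateral boundary $\Gamma$ only, for which the hidden regularity estimate supplies $\norm{w}_{H^1(M)}\lesssim\norm{\varphi}_{L^1(L^2)}$; combined with the well-definedness of the traces $u|_{t=0}$ and $\p_t u|_{t=0}$, this is what renders the source term harmless. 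The other genuinely non-elementary ingredient, which I would simply cite from \cite{LLT}, is the hidden regularity $\p_\nu w|_\Gamma\in L^2(\Gamma)$ used to absorb the inhomogeneous lateral datum $\norm{u}_{L^2(\Gamma)}$.
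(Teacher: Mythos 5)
Your duality scheme founders on exactly the obstacle you yourself flag, and the resolution you sketch is a restatement of the difficulty, not an argument. The norm $\norm{\Box u}_{H^{-1}((0,T)\times\Omega)}$ appearing in the statement is the dual norm of $H^1_0(M)$: this is the characterization \eqref{Hm1_exp} used in the paper, $f = f_{-1}+\sum_{j}\p_{x^j}f_j$ with $f_j\in L^2(M)$ and the norm given by the infimum over decompositions. The pairing $\pair{\Box u, w}$ against an adjoint solution $w$ that does not vanish at $t=0$ is not controlled by this norm; it is not even well defined through it, because two decompositions representing the same functional on $H^1_0(M)$ can pair differently against such a $w$. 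Concretely, take $f_0\in L^2(\Omega)$ nonzero and independent of $t$: then $\p_t f_0=0$ as a distribution, yet the decomposition pairing $-\int_M f_0\, \p_t w \d x$ equals $\int_\Omega f_0(x)\, w(0,x)\d x \neq 0$ for your $w$ (which vanishes at $t=T$ but not at $t=0$). So saying that ``the relevant pairing is against functions vanishing on the lateral boundary only'' does not render the source term harmless: the dual norm over that larger test class is strictly stronger than $H^{-1}((0,T)\times\Omega)$, and bridging that gap is precisely what a proof must accomplish.

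Second, and decisively, your route to the middle term is provably impossible: you propose to obtain $\norm{\p_t u}_{L^2(0,T;H^{-1}(\Omega))}$ from $C^1(0,T;H^{-1}(\Omega))$ regularity of the transposition solution, i.e.\ through an $L^\infty$-in-time bound, but Remark \ref{rem_opt_rough} shows this bound is false. For sources of the form $\p_t f$ with $f\in L^2(0,T)$ one has $\norm{\p_t f}_{H^{-1}(M)}\le \norm{f}_{L^2(M)}$, while the solution of \eqref{wave_with_rhs} satisfies $\p_t u = \Delta v + f$ (with $\Box v = f$, zero data), which lies in $L^\infty(0,T;H^{-1}(\Omega))$ only if $f\in L^\infty(0,T)$. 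This is exactly why the proposition is stated with the $L^2$-in-time norm, and any correct proof must produce that norm directly rather than deduce it from an $L^\infty$ bound. The paper does so by reducing to zero data via the estimate \eqref{LLT_est} of \cite{LLT} (which accepts $L^1(0,T;H^{-1}(\Omega))$ sources, covering $f_{-1}$ and the spatial-derivative terms $\p_{x^j}f_j$, $j\ge 1$), and then handling the critical term $\p_t f_0$ by the substitution $u=\p_t v$ where $\Box v = f_0$ with zero data: LLT applied to $v$ gives the $L^\infty(L^2)$ and normal-trace bounds for $u$, and the identity $\p_t u = \Delta v + f_0$ yields membership in $L^2(0,T;H^{-1}(\Omega))$ --- and no better. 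To repair your proof you would need to replace the transposition treatment of the source by this decomposition-plus-substitution mechanism (or an equivalent one); the hidden-regularity and trace pairings in your argument are otherwise fine and parallel what \cite{LLT} provides.
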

\begin{proof}
The estimate 
    \begin{align}\label{LLT_est}
&\norm{u}_{L^\infty(0,T; L^2(\Omega))}
+
\norm{\p_t u}_{L^\infty(0,T; H^{-1}(\Omega))}
+
\norm{\p_\nu u}_{H^{-1}((0,T) \times \p \Omega)}
\\\notag&\quad\lesssim 
\norm{u|_{t=0}}_{L^2(\Omega)}
+
\norm{\p_t u|_{t=0}}_{H^{-1}(\Omega)}
+
\norm{u}_{L^2((0,T) \times \p \Omega)}
+
\norm{\Box u}_{L^1(0,T; H^{-1}(\Omega))}
    \end{align}
follows from \cite[Theorem 2.3]{LLT}, see also Remark 2.2 there. Thus 
it is enough to consider the equation
    \begin{align}\label{wave_with_rhs}
\begin{cases}
\Box u = f,
\\
u|_{x \in \p \Omega} = 0,
\\
u|_{t < 0} = 0,
\end{cases}
    \end{align}
and show that its solution satisfies
    \begin{align*}
&\norm{u}_{L^\infty(0,T; L^2(\Omega))}
+
\norm{\p_t u}_{L^2(0,T; H^{-1}(\Omega))}
+
\norm{\p_\nu u}_{H^{-1}((0,T) \times \p \Omega)}
\lesssim
\norm{f}_{H^{-1}((0,T) \times \Omega)}.
    \end{align*}
Let us use the shorthand notations $M = (0,T) \times \Omega$
and $(t,x) = (x^0, \dots, x^n)$.
We recall that for any $f \in H^{-1}(M)$
there are $f_j \in L^2(M)$, $j=-1,0,\dots,n$,
such that 
    \begin{align}\label{Hm1_exp}
f = f_{-1} + \sum_{j=0}^n \p_{x^j} f_j,
    \end{align}
and that 
    \begin{align*}
\norm{f}_{H^{-1}(M)}^2 = \inf \sum_{j=-1}^n \norm{f_j}_{L^2(M)}^2,
    \end{align*}
where the infimum is taken over all $f_j \in L^2(M)$ satisfying (\ref{Hm1_exp}), see e.g. \cite[Theorem 1, p. 299]{EvansPDEbook}.
Thus it is enough to show that 
    \begin{align*}
&\norm{u}_{L^\infty(0,T; L^2(\Omega))}
+
\norm{\p_t u}_{L^2(0,T; L^2(\Omega))}
+
\norm{\p_\nu u}_{H^{-1}((0,T) \times \p \Omega)}
\lesssim
\norm{f_j}_{L^2((0,T) \times \Omega)},
    \end{align*}
where $u$ satisfies (\ref{wave_with_rhs}) 
with $f$ replaced by $\p_{x^j} f_j$ when $j \ge 0$
and by $f_{-1}$ when $j=-1$. 
The cases $j=-1$ and $j > 0$ are contained in (\ref{LLT_est}). 

Let us consider the case $j=0$.
We denote by $v$ the solution of (\ref{wave_with_rhs})
with $f = f_0$.
Then $u = \p_t v$ and it follows from \cite[Theorem 2.1]{LLT} that
    \begin{align*}
\norm{u}_{L^\infty(0,T; L^2(\Omega))}
+
\norm{\p_\nu u}_{H^{-1}((0,T) \times \p \Omega)}
&\lesssim
\norm{\p_t v}_{L^\infty(0,T; L^2(\Omega))}
+
\norm{\p_\nu v}_{L^2((0,T) \times \p \Omega)}
\\&\lesssim
\norm{f_0}_{L^2((0,T) \times \Omega)}.
    \end{align*}
Moreover, 
    \begin{align*}
\norm{\p_t u}_{L^2(0,T; H^{-1}(\Omega))}
=
\norm{\p_t^2 v}_{L^2(0,T; H^{-1}(\Omega))}
\le
\norm{\Delta v}_{L^2(0,T; H^{-1}(\Omega))}
+
\norm{f_0}_{L^2(0,T; H^{-1}(\Omega))},
    \end{align*}
and using \cite[Theorem 2.1]{LLT} again,
    \begin{align*}
\norm{\Delta v}_{L^2(0,T; H^{-1}(\Omega))}
\lesssim
\norm{v}_{L^2(0,T; H^1(\Omega))}
\lesssim
\norm{f_0}_{L^2((0,T) \times \Omega)}.
    \end{align*}
\end{proof}

\begin{remark}\label{rem_opt_rough}
It is not possible to improve the $L^2(0,T; H^{-1}(\Omega))$ norm of $\p_t u$ on the left-hand side of (\ref{direct0})
to its $L^\infty(0,T; H^{-1}(\Omega))$ norm.
\end{remark}
\begin{proof}
To get a contradiction, we suppose that 
    \begin{align*}
\norm{\p_t u}_{L^\infty(0,T; H^{-1}(\Omega))}
\lesssim 
\norm{\Box u}_{H^{-1}((0,T) \times \Omega)}
    \end{align*}
for solutions $u$ of (\ref{wave_with_rhs}).
Let $f \in L^2(0,T)$, and denote by $u$ and $v$ the solutions of (\ref{wave_with_rhs}) with sources $\p_t f$ and $f$, respectively.  
Then $u = \p_t v$ and
    \begin{align*}
f = \p_t^2 v - \Delta v = \p_t u - \Delta v \in L^\infty(0,T; H^{-1}(\Omega)).
    \end{align*}
But this implies that $f \in L^\infty(0,T)$.
As $f\in L^2(0,T)$ was arbitrary, we get the contradiction 
$L^2(0,T) \subset L^\infty(0,T)$.
\end{proof}

If $\Box u = 0$ and $u|_{x \in \p \Omega} = 0$ then the norm on the left-hand side of (\ref{direct0})
controls the $L^\infty(0,T; H^{-1}(\Omega))$ norm of $\p_t u$.
In fact, we have:

\begin{lemma}
\label{lem_linfty_ptu}
Suppose that $u \in L^2((0,T) \times \Omega)$ satisfies
$\Box u = 0$ and $u|_{x \in \p \Omega} = 0$. Then 
    \begin{align*}
&\norm{u}_{L^\infty(0,T; L^2(\Omega))}
+
\norm{\p_t u}_{L^\infty(0,T; H^{-1}(\Omega))}
+ 
\norm{\p_\nu u}_{H^{-1}((0,T) \times \p \Omega)}
\lesssim 
\norm{u}_{L^2((0,T) \times \Omega)}.
    \end{align*}
Moreover, $u \in C(0,T; L^2(\Omega)) \cap  C^1(0,T; H^{-1}(\Omega))$.
\end{lemma}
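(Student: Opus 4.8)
The plan is to reduce everything to the spectral decomposition of $u$ in the Dirichlet eigenbasis, where the wave equation decouples into scalar harmonic oscillators and both energy conservation and the claimed bound become elementary. Let $\{e_k\}_{k\ge 1}$ be an $L^2(\Omega)$-orthonormal basis of eigenfunctions of $-\Delta$ with homogeneous Dirichlet conditions, $-\Delta e_k = \lambda_k^2 e_k$ with $0 < \lambda_1 \le \lambda_2 \le \cdots$. Writing $c_k(t) = (u(t,\cdot), e_k)_{L^2(\Omega)}$, the hypothesis $u \in L^2(M)$ gives $c_k \in L^2(0,T)$ with $\sum_k \norm{c_k}_{L^2(0,T)}^2 = \norm{u}_{L^2(M)}^2$. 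First I would show that each $c_k$ solves $c_k'' + \lambda_k^2 c_k = 0$ in $\mathcal D'(0,T)$: pairing $\Box u = 0$ against $\eta(t) e_k(x)$ for $\eta \in C_0^\infty(0,T)$ and applying Green's identity in $x$, the two boundary contributions on $\Gamma$ vanish because $u|_\Gamma = 0$ (interpreted via the partial hypoellipticity of Lemma \ref{lem_parthypo}) and because $e_k|_{\p\Omega} = 0$. Consequently $c_k(t) = a_k \cos(\lambda_k t) + b_k \sin(\lambda_k t)$ is smooth, and I set $E_k := a_k^2 + b_k^2$.

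The key computation is that, for each $k$,
\[
c_k(t)^2 + \lambda_k^{-2} c_k'(t)^2 = a_k^2 + b_k^2 = E_k
\]
is independent of $t$. Since $\norm{u(t)}_{L^2(\Omega)}^2 = \sum_k c_k(t)^2$ and $\norm{\p_t u(t)}_{H^{-1}(\Omega)}^2 = \sum_k \lambda_k^{-2} c_k'(t)^2$, this is precisely the conservation of the energy $\mathcal E(t) := \norm{u(t)}_{L^2(\Omega)}^2 + \norm{\p_t u(t)}_{H^{-1}(\Omega)}^2 = \sum_k E_k$ at the $L^2 \times H^{-1}$ level. To bound $\sum_k E_k$ I would establish the uniform lower bound
\[
\int_0^T c_k(t)^2\, dt \ge \delta_T\, E_k, \qquad k \ge 1,
\]
with $\delta_T > 0$ depending only on $T$ and $\lambda_1$. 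This holds because $\tfrac1T\int_0^T \cos^2(\lambda t - \varphi)\,dt$ tends to $\tfrac12$ as $\lambda \to \infty$ uniformly in the phase $\varphi$, and is continuous and strictly positive on each compact set $\lambda \in [\lambda_1, \Lambda]$; here the spectral gap $\lambda_1 > 0$ is essential. Summing over $k$ (there are no cross terms, by orthonormality of $\{e_k\}$) yields $\sum_k E_k \le \delta_T^{-1}\norm{u}_{L^2(M)}^2$. Combined with energy conservation this gives $\norm{u}_{L^\infty(0,T;L^2(\Omega))}^2 + \norm{\p_t u}_{L^\infty(0,T;H^{-1}(\Omega))}^2 \le \sum_k E_k \lesssim \norm{u}_{L^2(M)}^2$, while the continuity $u \in C(0,T;L^2(\Omega)) \cap C^1(0,T;H^{-1}(\Omega))$ follows since the partial sums of $\sum_k c_k(t) e_k$ converge uniformly in $t$ in the respective norms, their tails being controlled by $\sum_{k>N} E_k$.

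It remains to bound the normal trace. Having established that the initial data $u|_{t=0} \in L^2(\Omega)$ and $\p_t u|_{t=0} \in H^{-1}(\Omega)$ are well-defined with $\norm{u|_{t=0}}_{L^2(\Omega)}^2 + \norm{\p_t u|_{t=0}}_{H^{-1}(\Omega)}^2 = \sum_k E_k \lesssim \norm{u}_{L^2(M)}^2$, I would simply invoke Proposition \ref{prop_energy}. Since $\Box u = 0$ and $u|_\Gamma = 0$, its right-hand side reduces to $\norm{u|_{t=0}}_{L^2(\Omega)} + \norm{\p_t u|_{t=0}}_{H^{-1}(\Omega)}$, giving $\norm{\p_\nu u}_{H^{-1}(\Gamma)} \lesssim \norm{u}_{L^2(M)}$ and completing the estimate.

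The main obstacle is the rigorous justification of the decoupled ODEs in the first step: since $u$ is only assumed to lie in $L^2(M)$, the boundary condition $u|_\Gamma = 0$ must be given a precise meaning (through the partial hypoellipticity of Lemma \ref{lem_parthypo}) before one may legitimately integrate by parts against the non-compactly-supported $e_k$ and discard the boundary contribution. Once the ODE is in hand, the remaining steps are elementary; the only other point requiring care is that $\delta_T$, and hence the implicit constant throughout, degenerates as $T \to 0$, which is harmless since $T$ is a fixed parameter.
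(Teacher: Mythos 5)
Your plan is correct, but it takes a genuinely different route from the paper's. The paper never touches the spectral decomposition: it multiplies $u$ by a time cutoff $\chi$ equal to $1$ on $(0,T/2)$ and vanishing near $t=T$, applies the backward-in-time energy estimate of Proposition \ref{prop_energy} to $\chi u$ (the source being the commutator $[\Box,\chi]u$, controlled by $\norm{u}_{L^2(M)}$), then runs \eqref{LLT_est} backwards on $(0,s)$ and integrates over $s\in(0,T/2)$ to bound $(u|_{t=0},\p_t u|_{t=0})$ in $L^2\times H^{-1}$; a forward application of \eqref{LLT_est} then gives the full estimate, and continuity comes from mollification in time. Your time-averaging bound $\int_0^T c_k^2\,dt \ge \delta_T E_k$ plays exactly the role of the paper's integration in $s$: both convert $L^2$-in-time information into pointwise-in-time control. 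What your route buys is exact per-mode energy conservation (so the $L^\infty$ bounds come out as identities rather than estimates) and an elementary, self-contained argument that needs Proposition \ref{prop_energy} only for the normal-trace term --- which in both proofs ultimately rests on the hidden-regularity results of \cite{LLT}. What it costs is generality: your argument is tied to the self-adjoint, time-independent structure of $\Box$ with Dirichlet conditions, whereas the paper's cutoff-plus-energy-estimate technique transfers verbatim to operators such as $P=\Box+V$ used in the boundary-control section.

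One caveat on the step you yourself flag as the main obstacle. Lemma \ref{lem_parthypo} with $s=0$ only provides the Dirichlet trace of an $L^2$ solution (the condition $s+1/2>j$ fails for $j=1$), so you may not write a Green identity containing the pairing $\langle \p_\nu u, \eta e_k\rangle_\Gamma$ and then discard it: that pairing is not defined at this regularity. Arrange the computation so that only $u|_\Gamma$ ever appears. For instance, mollify in time, observe that $u_\delta(t)\in L^2(\Omega)$ satisfies $\Delta u_\delta(t)=\p_t^2 u_\delta(t)\in L^2(\Omega)$ for each $t$, and use the Green formula for the maximal Laplacian, whose only boundary term is $\langle u_\delta(t)|_{\p\Omega}, \p_\nu e_k\rangle_{\p\Omega}$; this vanishes because the trace of $u_\delta$ is the time-mollification of $u|_\Gamma=0$. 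With that rearrangement the decoupling $c_k''+\lambda_k^2 c_k=0$ is rigorous and the rest of your proof goes through.
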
 
\begin{proof}
Let $\chi \in C^\infty(\R)$ satisfy $\chi(t) = 0$ near $t=T$
and $\chi(t) = 1$ for $t \in (0,T/2)$.
Applying Proposition \ref{prop_energy} to $\chi u$ backwards in time, we obtain
    \begin{align*}
&\norm{u}_{L^\infty(0,T/2; L^2(\Omega))}
+
\norm{\p_t u}_{L^2(0,T/2; H^{-1}(\Omega))}
\lesssim \norm{[\Box, \chi] u}_{H^1((0,T) \times \Omega)}
\lesssim \norm{u}_{L^2((0,T) \times \Omega)}.
    \end{align*}
Applying (\ref{LLT_est}) backwards in time on the interval $(0,s)$
where $s < T/2$, we get
    \begin{align*}
\norm{\p_t u|_{t=0}}_{H^{-1}(\Omega)}^2
\lesssim 
\norm{u|_{t=s}}_{L^2(\Omega)}^2
+
\norm{\p_t u|_{t=s}}_{H^{-1}(\Omega)}^2.
    \end{align*}
Integration in $s$ gives
    \begin{align*}
\norm{\p_t u|_{t=0}}_{H^{-1}(\Omega)}^2
\lesssim 
&\norm{u}_{L^2(0,T/2; L^2(\Omega))}^2
+
\norm{\p_t u}_{L^2(0,T/2; H^{-1}(\Omega))}^2.
    \end{align*}
We conclude that 
    \begin{align*}
\norm{u|_{t=0}}_{L^2(\Omega)}
+
\norm{\p_t u|_{t=0}}_{H^{-1}(\Omega)}
\lesssim
\norm{u}_{L^2((0,T) \times \Omega)}.
    \end{align*}
The claimed estimate follows from (\ref{LLT_est}).

Let us now turn the claimed continuity.
Let $\epsilon > 0$ and $u_j$ be a mollification in time 
that satisfies $u_j \to u$ in $L^2((\epsilon,T-\epsilon) \times \Omega)$. Then $u_j$ converges in 
$$C(\epsilon,T-\epsilon; L^2(\Omega)) \cap  C^1(\epsilon,T-\epsilon; H^{-1}(\Omega))$$
and thus $u$ is in this space.
In particular, $u|_{t=T/2}$ and $\p_t u|_{t=T/2}$ are well-defined.
Solving the initial value problem starting from these gives the desired conclusion.
\end{proof} 

\begin{theorem}[Distributed observability estimate]
\label{th_obs_dist}
Let $T>0$ and let an open set $\omega \subset \Omega$ 
satisfy the geometric control condition.
Then
    \begin{align*}
&\norm{\phi|_{t=0}}_{L^2(\Omega)}
+
\norm{\p_t \phi|_{t=0}}_{H^{-1}(\Omega)}
\lesssim
\norm{\phi}_{L^2((0,T) \times \omega)}
+ 
\norm{\phi}_{L^2((0,T) \times \p \Omega)}
+
\norm{\Box \phi}_{H^{-1}((0,T) \times \Omega)}.
    \end{align*}
\end{theorem}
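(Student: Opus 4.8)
The plan is to first reduce the full inhomogeneous estimate to the corresponding homogeneous observability estimate, and then to establish the latter by a compactness--uniqueness argument driven by microlocal defect measures.

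For the reduction, let $\phi$ be as in the statement and let $\psi$ solve the homogeneous problem $\Box\psi = 0$, $\psi|_{x\in\p\Omega}=0$ with the same Cauchy data $\psi|_{t=0}=\phi|_{t=0}$, $\p_t\psi|_{t=0}=\p_t\phi|_{t=0}$ at $t=0$ (these traces are well defined by the continuity properties used in the proof of Lemma \ref{lem_linfty_ptu} together with the transposition theory of \cite{Lions1972}). Set $r=\phi-\psi$, so that $r$ solves $\Box r=\Box\phi$, $r|_{x\in\p\Omega}=\phi|_\Gamma$ with vanishing Cauchy data at $t=0$. Applying the energy estimate, Proposition \ref{prop_energy}, to $r$ gives
\[
\norm{r}_{L^\infty(0,T;L^2(\Omega))} \lesssim \norm{\phi}_{L^2((0,T)\times\p\Omega)} + \norm{\Box\phi}_{H^{-1}((0,T)\times\Omega)},
\]
and in particular the same bound holds for $\norm{r}_{L^2((0,T)\times\omega)}$. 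Since $\psi=\phi-r$ carries the same initial data as $\phi$, it suffices to prove the homogeneous observability estimate
\[
\norm{\psi|_{t=0}}_{L^2(\Omega)} + \norm{\p_t\psi|_{t=0}}_{H^{-1}(\Omega)} \lesssim \norm{\psi}_{L^2((0,T)\times\omega)}
\]
for $\Box\psi=0$, $\psi|_{x\in\p\Omega}=0$; combining it with the bound on $r$ and the triangle inequality $\norm{\psi}_{L^2((0,T)\times\omega)}\le\norm{\phi}_{L^2((0,T)\times\omega)}+\norm{r}_{L^2((0,T)\times\omega)}$ yields the claim.

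To prove the homogeneous estimate I would argue by contradiction. Suppose it fails; then there is a sequence of solutions $\psi_j$ of $\Box\psi_j=0$, $\psi_j|_{x\in\p\Omega}=0$, normalised so that $\norm{\psi_j|_{t=0}}_{L^2(\Omega)}+\norm{\p_t\psi_j|_{t=0}}_{H^{-1}(\Omega)}=1$ while $\norm{\psi_j}_{L^2((0,T)\times\omega)}\to 0$. By Lemma \ref{lem_linfty_ptu} (and its forward counterpart, the standard energy bound) the norms $\norm{\psi_j}_{L^2(M)}$ are bounded above and below, so after passing to a subsequence $\psi_j\rightharpoonup\psi_*$ weakly in $L^2(M)$ and we may associate to $(\psi_j)$ a microlocal defect measure $\mu$. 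The weak limit $\psi_*$ solves $\Box\psi_*=0$, $\psi_*|_{x\in\p\Omega}=0$ and vanishes on $(0,T)\times\omega$; by unique continuation for the wave equation (Holmgren--Tataru, with $T$ large enough owing to the geometric control condition) this forces $\psi_*=0$, so in fact $\psi_j\rightharpoonup 0$. The measure $\mu$ is supported on the characteristic set $\{g(\xi,\xi)=0\}$ because $\Box\psi_j=0$, and it is invariant under the generalized bicharacteristic flow by the boundary propagation theorem of \cite{BLRII}. Since $\psi_j\to 0$ in $L^2((0,T)\times\omega)$, the measure $\mu$ vanishes over $(a,b)\times\omega$; assumption (A) then guarantees that every compressed generalized bicharacteristic meets this set, so by invariance $\mu\equiv 0$. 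Hence $\psi_j\to 0$ strongly in $L^2(M)$, and Lemma \ref{lem_linfty_ptu} gives $\norm{\psi_j|_{t=0}}_{L^2(\Omega)}+\norm{\p_t\psi_j|_{t=0}}_{H^{-1}(\Omega)}\to 0$, contradicting the normalisation.

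The routine part is the reduction in the first two paragraphs, which uses only Proposition \ref{prop_energy} and Lemma \ref{lem_linfty_ptu}. The genuine difficulty is concentrated in the propagation of the microlocal defect measure up to and along the boundary $\Gamma$: one must handle transversal reflection together with grazing and gliding rays, which is exactly the content of the Melrose--Sjöstrand and Bardos--Lebeau--Rauch theory encoded in the notion of compressed generalized bicharacteristic from \cite{BLRII}. Verifying that $\mu$ is a well-defined nonnegative measure that is simultaneously flow-invariant and compatible with the Dirichlet condition, and that working at the $L^2(M)$ level (rather than the classical $H^1\times L^2$ energy level) introduces no extra boundary contributions, is where the main care is required.
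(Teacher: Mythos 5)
Your proof is correct, and its first half is exactly the paper's proof. The reduction you perform---splitting $\phi = \psi + r$ where $\psi$ solves the homogeneous Dirichlet problem with the Cauchy data of $\phi$ at $t=0$ and $r$ carries the forcing $\Box\phi$ and the boundary trace $\phi|_\Gamma$ with vanishing Cauchy data, then controlling $\norm{r}_{L^2((0,T)\times\omega)}$ by Proposition \ref{prop_energy}---is precisely the paper's argument, up to relabelling (the paper constructs your $r$, calling it $u$, and sets $\psi = \phi - u$). Where you diverge is in the treatment of the homogeneous observability estimate \eqref{obs_dist_classical}: the paper simply cites it as classical (referring to \cite{LLTT17}), whereas you sketch a proof by compactness--uniqueness with microlocal defect measures. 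Your sketch follows the standard Bardos--Lebeau--Rauch/Burq--G\'erard strategy and is sound in outline, but note that the genuinely delicate point you flag at the end---constructing and propagating the defect measure up to the Dirichlet boundary for a sequence bounded only in $L^2(M)$, i.e.\ at the $L^2\times H^{-1}$ energy level rather than $H^1_0\times L^2$---is exactly the content that the paper outsources to the literature, so your proposal does not eliminate the external input so much as relocate it. Two small points: the theorem's hypothesis is that $\omega$ with the time interval $(0,T)$ satisfies the geometric control condition, so the flow-invariance argument should conclude that $\mu$ vanishes because every compressed generalized bicharacteristic meets $(0,T)\times\omega$; invoking assumption (A) and the set $(a,b)\times\omega$ from the introduction is a slight conflation, since the theorem is stated independently of (A). Also, the uniqueness of the weak limit $\psi_*$ does follow as you say, but it is worth being explicit that it is the geometric control condition (via finite speed of propagation and Tataru--Holmgren) that guarantees $T$ is large enough for unique continuation from $(0,T)\times\omega$.
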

\begin{proof}
It is classical that 
    \begin{align}\label{obs_dist_classical}
&\norm{\psi|_{t=0}}_{L^2(\Omega)}
+
\norm{\p_t \psi|_{t=0}}_{H^{-1}(\Omega)}
\lesssim
\norm{\psi}_{L^2((0,T) \times \omega)}
    \end{align}
when 
    \begin{align*}
&\begin{cases}
\Box \psi = 0,
\\
\psi|_{x \in \p \Omega} =0,
\end{cases}
    \end{align*}
see e.g. \cite{LLTT17}.
Let $u$ solve 
    \begin{align*}
&\begin{cases}
\Box u = \Box \phi,
\\
u|_{x \in \p \Omega} = \phi|_{x \in \p \Omega},
\\
u|_{t = 0} = 0,\ \p_t u|_{t = 0} = 0,
\end{cases}
    \end{align*}
and define $\psi = \phi - u$.
Then using (\ref{obs_dist_classical}) and (\ref{direct0}),
    \begin{align*}
&\norm{\phi|_{t=0}}_{L^2(\Omega)}
+
\norm{\p_t \phi|_{t=0}}_{H^{-1}(\Omega)}
=\norm{\psi|_{t=0}}_{L^2(\Omega)}
+
\norm{\p_t \psi|_{t=0}}_{H^{-1}(\Omega)}
\\&\quad\lesssim
\norm{\psi}_{L^2((0,T) \times \omega)}
\le
\norm{\phi}_{L^2((0,T) \times \omega)}
+ 
\norm{u}_{L^2((0,T) \times \omega)}
\\&\quad\lesssim
\norm{\phi}_{L^2((0,T) \times \omega)}
+
\norm{\phi}_{L^2((0,T) \times \p \Omega)}
+
\norm{\Box \phi}_{H^{-1}((0,T) \times \Omega)}.
    \end{align*}
\end{proof}

\begin{remark}\label{rem_obs_variants}
By applying Theorem \ref{th_obs_dist} to the function $(t,x) \mapsto \phi(T-t,x)$ we obtain the following variant
    \begin{align*}
&\norm{\phi|_{t=T}}_{L^2(\Omega)}
+
\norm{\p_t \phi|_{t=T}}_{H^{-1}(\Omega)}
\lesssim
\norm{\phi}_{L^2((0,T) \times \omega)}
+ 
\norm{\phi}_{L^2((0,T) \times \p \Omega)}
+
\norm{\Box \phi}_{H^{-1}((0,T) \times \Omega)},
    \end{align*}
and by combining Proposition \ref{prop_energy} and Theorem \ref{th_obs_dist}, we get
    \begin{align*}
&\norm{\phi}_{L^\infty(0,T; L^2(\Omega))}
+
\norm{\p_t \phi}_{L^2(0,T; H^{-1}(\Omega))}
\\&\quad\lesssim
\norm{\phi}_{L^2((0,T) \times \omega)}
+ 
\norm{\phi}_{L^2((0,T) \times \p \Omega)}
+
\norm{\Box \phi}_{H^{-1}((0,T) \times \Omega)},
    \end{align*}
assuming that $T>0$ and $\omega \subset \Omega$ 
satisfy the geometric control condition.
\end{remark}

\begin{theorem}[Boundary observability estimate]
\label{th_obs_bd}
Let $T>0$ and let an open set $\omega \subset \p\Omega$ 
satisfy the geometric control condition.
Let $V \in C^\infty(\Omega)$ and define $P$ by \eqref{def_P}.
Then
    \begin{align}\label{obs_bd}
&\norm{\phi|_{t=0}}_{L^2(\Omega)}
+
\norm{\p_t \phi|_{t=0}}_{H^{-1}(\Omega)}
\\\notag&\quad\lesssim
\norm{\p_\nu \phi}_{H^{-1}((0,T) \times \omega)}
+ 
\norm{\phi}_{L^2((0,T) \times \p \Omega)}
+
\norm{P \phi}_{H^{-1}((0,T) \times \Omega)}.
    \end{align}
\end{theorem}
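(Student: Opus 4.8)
The plan is to mirror the proof of the distributed observability estimate in Theorem~\ref{th_obs_dist}: I would reduce the general statement to a classical boundary observability inequality for solutions with homogeneous Dirichlet data by a lifting argument, and then absorb the lifting terms using the energy estimate of Proposition~\ref{prop_energy}. First I would record the classical boundary observability inequality at the transposition regularity level: for $\psi$ solving $P\psi = 0$ with $\psi|_{x\in\p\Omega}=0$ there holds
    \begin{align*}
\norm{\psi|_{t=0}}_{L^2(\Omega)} + \norm{\p_t\psi|_{t=0}}_{H^{-1}(\Omega)} \lesssim \norm{\p_\nu\psi}_{H^{-1}((0,T)\times\omega)}.
    \end{align*}
This is the dual formulation of the standard boundary observability estimate under the geometric control condition, which holds for finite-energy data $(\psi|_{t=0},\p_t\psi|_{t=0})\in H^1_0(\Omega)\times L^2(\Omega)$ with $\p_\nu\psi\in L^2((0,T)\times\omega)$; the stated version is obtained by transposition together with the hidden boundary regularity of \cite{LLT}, and the smooth time-independent potential $V$ is a lower-order perturbation that does not affect the qualitative GCC observability, see \cite{BLRII}.

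Next, given an arbitrary $\phi$, I would let $u$ solve the lifted problem
    \begin{align*}
\begin{cases}
P u = P\phi, \\
u|_{x\in\p\Omega} = \phi|_{x\in\p\Omega}, \\
u|_{t=0} = 0,\ \p_t u|_{t=0} = 0,
\end{cases}
    \end{align*}
and set $\psi = \phi - u$, so that $P\psi = 0$ and $\psi|_{x\in\p\Omega}=0$. Applying the classical inequality to $\psi$ and using $\p_\nu\psi = \p_\nu\phi - \p_\nu u$ together with boundedness of restriction to $\omega$ gives
    \begin{align*}
\norm{\phi|_{t=0}}_{L^2(\Omega)} + \norm{\p_t\phi|_{t=0}}_{H^{-1}(\Omega)} \lesssim \norm{\p_\nu\phi}_{H^{-1}((0,T)\times\omega)} + \norm{\p_\nu u}_{H^{-1}((0,T)\times\p\Omega)}.
    \end{align*}
It then remains to bound the Neumann trace of the lifting by the right-hand side. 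Since $u$ has vanishing initial data, the energy estimate of Proposition~\ref{prop_energy} (which extends to $P$ by writing $\Box u = P\phi - Vu$, treating $Vu$ as a lower-order source, and absorbing it by Gr\"onwall's inequality) yields
    \begin{align*}
\norm{\p_\nu u}_{H^{-1}((0,T)\times\p\Omega)} \lesssim \norm{\phi}_{L^2((0,T)\times\p\Omega)} + \norm{P\phi}_{H^{-1}((0,T)\times\Omega)},
    \end{align*}
and combining the last two displays proves the theorem.

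The main obstacle is the classical boundary observability inequality at the $L^2(\Omega)\times H^{-1}(\Omega)$ regularity level: the difficulty is not the energy-level estimate itself but its transposed version, whose rigorous justification relies on the hidden regularity $\p_\nu\psi\in L^2((0,T)\times\omega)$ for finite-energy solutions together with a duality argument. I would also take care to confirm two routine points that are easy to overlook: that restriction of $H^{-1}$ Neumann traces from $\p\Omega$ to $\omega$ is bounded, which follows by duality from the bounded extension by zero $H^1_0(\omega)\to H^1(\p\Omega)$, and that the lower-order potential $V$ genuinely drops out of the qualitative GCC observability, so that the implicit constant depends on $V$ only through the energy estimate.
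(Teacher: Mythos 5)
Your overall architecture matches the paper's: both proofs lift the inhomogeneities by solving $Pv=P\phi$, $v|_{x\in\p\Omega}=\phi|_{x\in\p\Omega}$ with vanishing data at $t=0$, apply an observability inequality for the homogeneous Dirichlet problem to $\psi=\phi-v$, and control $\p_\nu v$ by Proposition \ref{prop_energy}. Your second half is essentially identical to the paper's (and your explicit remarks on absorbing $Vu$ by Gr\"onwall and on restricting $H^{-1}$ traces to $\omega$ are correct refinements of points the paper passes over silently).

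The genuine gap is in your first step, the inequality \eqref{obs} for solutions of the homogeneous problem. You assert that the estimate at the level $(\psi|_{t=0},\p_t\psi|_{t=0})\in L^2(\Omega)\times H^{-1}(\Omega)$ with observation in $H^{-1}((0,T)\times\omega)$ is ``the dual formulation'' of the classical finite-energy observability ($H^1_0\times L^2$ data, $L^2(\omega)$ observation), obtained ``by transposition together with the hidden boundary regularity of \cite{LLT}.'' This is not a valid derivation: transposition turns an observability inequality into a \emph{controllability} statement, not into another observability inequality one Sobolev level down. The two inequalities live at shifted regularity levels, and because the observation is a localized normal trace, the shift cannot be performed by conjugating with $(-\Delta_D)^{-1/2}$ or by any density/interpolation argument either; knowing $\norm{(\psi_0,\psi_1)}_{H^1_0\times L^2}\lesssim\norm{\p_\nu\psi}_{L^2((0,T)\times\omega)}$ does not imply $\norm{(\psi_0,\psi_1)}_{L^2\times H^{-1}}\lesssim\norm{\p_\nu\psi}_{H^{-1}((0,T)\times\omega)}$. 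Indeed the weak-level inequality is dual to null controllability of \emph{smoother} data by controls in $H^1_0((0,T)\times\omega)$, which is a nontrivial theorem (regularity of controls, cf.\ \cite{Ervedoza2010}), not a formal consequence of HUM plus hidden regularity. This is precisely why the paper states that this formulation is not available in the literature and proves it directly: it invokes the weak-level estimate (3.11) of \cite{BLRII}, namely \eqref{BLR_est}, removing the finite-dimensional space of invisible solutions by unique continuation, and then recovers the initial-data norms from $\norm{\psi}_{L^2((0,T)\times\Omega)}$ by a time-cutoff argument, the backward-in-time estimate \eqref{LLT_est}, and an interpolation bound for $\p_t\psi$ deduced from the equation. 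Your proposal is missing this entire ingredient; to repair it you must either reproduce such an argument or explicitly invoke the regularity-of-controls theory and carry out the duality carefully.
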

\begin{proof}
It is well-known that 
    \begin{align}\label{obs}
\norm{\psi|_{t=0}}_{L^2(\Omega)}
+
\norm{\p_t \psi|_{t=0}}_{H^{-1}(\Omega)}
\lesssim
\norm{\p_\nu \psi}_{H^{-1}((0,T) \times \omega)}
    \end{align}
for solutions $\psi$ of
    \begin{align}\label{homog}
\begin{cases}
P \psi = 0, 
\\ 
\psi|_{x \in \p \Omega} = 0.
\end{cases}
    \end{align}
However, we did not find this exact formulation in the literature, and give a short proof for the convenience of the reader.
\HOX{To Arnaud: do you know a good ref?}
It follows from (3.11) of the classical paper \cite{BLRII}
that
    \begin{align}\label{BLR_est}
\norm{\psi}_{L^2((0, T) \times \Omega)}
\lesssim \norm{\p_\nu \psi}_{H^{-1}((0,T) \times \omega)},
    \end{align}
since the space of invisible solutions is empty in our case due to unique continuation.     
Let $\tau \in C^\infty(\R)$
satisfy $\tau(t) = 1$ near $t = 0$ and $\tau(t) = 0$ near $t=T$.
Writing $u = \tau \psi$ and $f = \p_t \tau\, \p_t \psi + \p_t^2 \tau\, \psi$, there holds
    \begin{align*}
\begin{cases}
P u = f,
\\
u|_{x \in \p \Omega} = 0,
\\
u|_{t > T} = 0.
\end{cases}
    \end{align*}
As $\psi \in L^2((0,T) \times \Omega)$,
we have using $P \psi = 0$,
    \begin{align*}
\norm{\p_t^2 \psi}_{L^2(0,T; H^{-2}(\Omega))}
=
\norm{\Delta \psi}_{L^2(0,T; H^{-2}(\Omega))}
+ \norm{V \psi}_{L^2(0,T; H^{-2}(\Omega))}
\lesssim
\norm{\psi}_{L^2((0,T) \times \Omega)}.
    \end{align*}
Now interpolation, see e.g. \cite[Theorems 2.3 and 12.2]{Lions1972},  gives
    \begin{align*}
\norm{\p_t \psi}_{L^2(0,T; H^{-1}(\Omega))}
\lesssim 
\norm{\psi}_{L^2(0,T; L^2(\Omega))}.
    \end{align*}
Hence also
    \begin{align*}
\norm{f}_{L^2(0,T; H^{-1}(\Omega))}
\lesssim 
\norm{\psi}_{L^2(0,T; L^2(\Omega))},
    \end{align*}
and (\ref{LLT_est}), or rather its analogue backward in time, gives 
    \begin{align*}
\norm{u|_{t=0}}_{L^2(\Omega)} 
+ 
\norm{\p_t u|_{t=0}}_{H^{-1}(\Omega)} 
\lesssim 
\norm{\psi}_{L^2(0,T; L^2(\Omega))}.
    \end{align*}
But the state of $u$ at $t=0$ coincides with that of $\psi$,
and (\ref{obs}) follows from the above estimate and (\ref{BLR_est}).

Will now show (\ref{obs_bd}).
Let $v$ be the solution of
    \begin{align*}
\begin{cases}
P v = P \phi
\\
v|_{x \in \p \Omega} = \phi|_{x \in \p \Omega}
\\
v|_{t < 0} = 0.
\end{cases}
    \end{align*}
Then $\psi = \phi - v$ solves (\ref{homog}),
and (\ref{obs}) and (\ref{direct0}) imply
    \begin{align*}
&\norm{\phi|_{t=0}}_{L^2(\Omega)}
+
\norm{\p_t \phi|_{t=0}}_{H^{-1}(\Omega)}
= 
\norm{\psi|_{t=0}}_{L^2(\Omega)}
+
\norm{\p_t \psi|_{t=0}}_{H^{-1}(\Omega)}
\\&\quad\lesssim
\norm{\p_\nu \phi}_{H^{-1}((0,T) \times \omega)}
+
\norm{\p_\nu v}_{H^{-1}((0,T) \times \omega)}
\\&\quad\lesssim
\norm{\p_\nu \phi}_{H^{-1}((0,T) \times \omega)}
+
\norm{\phi}_{L^2((0,T) \times \p \Omega)}
+
\norm{P \phi}_{H^{-1}((0,T) \times \Omega)}.
    \end{align*}
\end{proof}

The analogue of Remark \ref{rem_obs_variants} holds also in the case of boundary observations. 
We need also the following classical result. 

\begin{lemma}[Partial hypoellipticity]\label{lem_parthypo}
Let $I \subset (0,T)$ be a compact interval. Suppose that $s \in \R$
and $j \in \N$ satisfy $s+1/2 > j$. 
Then
    \begin{align*}
\norm{\p_\nu^j u}_{H^{s-j-\frac32}(I \times \p \Omega)}
\lesssim \norm{u}_{H^s(M)} + \norm{\Box u}_{H^{s-1}(M)}.
    \end{align*}
\end{lemma}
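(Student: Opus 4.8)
The plan is to localise near the lateral boundary $\Gamma=(0,T)\times\p\Omega$ and reduce to a half-space model in which $\Box$ is non-characteristic across the boundary, and then to use the equation to express every normal derivative of $u$ through the Cauchy data and tangential derivatives of $\Box u$. First I would fix a finite cover of $\p\Omega$ by boundary charts, introduce boundary normal coordinates $(y,x_n)$ with $y=(t,x')$ tangential to $\Gamma$ and $x_n\ge 0$ the distance to $\p\Omega$, and use a partition of unity together with a cutoff $\zeta(x_n)$ supported in $x_n\in[0,\delta)$ and a cutoff in $t$ supported away from $\{0,T\}$ (legitimate since $I\subset(0,T)$ is compact). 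In these coordinates $\p_\nu$ becomes $\p_{x_n}$, because the Euclidean normal on $\Gamma$ is purely spatial and the time component of $\p_\nu$ drops out, and
\[
\Box = -\p_{x_n}^2 + L,
\]
where $L=L(y,x_n,\p_y,\p_{x_n})$ is tangential of order two plus a first-order normal term with smooth coefficients arising from the curvature of the level sets of $x_n$. The cutoffs produce commutators supported away from $x_n=0$, of the same order as $u$, which do not affect the traces.

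The core is a fibrewise ordinary differential equation estimate. Taking the tangential Fourier transform in $y$, with dual variable $\eta$ and $\lambda:=\langle\eta\rangle$, the identity above reads $\p_{x_n}^2\hat u=\ell(\eta)\hat u-\widehat{\Box u}$ with $|\ell(\eta)|\lesssim\lambda^2$, modulo lower-order terms. Differentiating $(j-2)$ times and evaluating at $x_n=0$ gives, for $j\ge 2$, an expression of $\p_{x_n}^j\hat u(0,\eta)$ as $\ell^{\lfloor j/2\rfloor}$ times $\hat u(0,\eta)$ or $\p_{x_n}\hat u(0,\eta)$, plus a sum of terms $\ell^{i}\,\p_{x_n}^{k}\widehat{\Box u}(0,\eta)$ with $k\le j-2$. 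I would then control the one-dimensional traces by the parameter-dependent inequalities $|w(0)|^2\lesssim\lambda\|w\|_{L^2}^2+\lambda^{-1}\|\p_{x_n}w\|_{L^2}^2$ and $\|\p_{x_n}w\|_{L^2}^2\lesssim\lambda^2\|w\|_{L^2}^2+\lambda^{-2}\|\p_{x_n}^2w\|_{L^2}^2$ on $x_n\in(0,\delta)$ (Ehrling's inequality with the parameter chosen optimally), applied to $w=\hat u(\cdot,\eta)$ and to normal derivatives of $\widehat{\Box u}$. Substituting $\p_{x_n}^2\hat u=\ell\hat u-\widehat{\Box u}$ converts all second normal derivatives of $\hat u$ into $\hat u$ and $\widehat{\Box u}$, so that each trace is bounded by fibrewise $L^2$ norms of $\hat u$ and of finitely many normal derivatives of $\widehat{\Box u}$, weighted by explicit powers of $\lambda$. (Alternatively one can define these boundary quantities weakly through the Green identity (\ref{int_by_parts}) and estimate them by duality; I prefer the microlocal route as it produces the Sobolev exponents directly.)

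It remains to integrate in $\eta$ and read off the orders. A direct computation shows the term carrying $\hat u$ has amplitude $\lambda^{j}\cdot\lambda^{1/2}$, which for both parities of $j$ forces the trace into $H^{\sigma}$ with $\sigma\le s-j-\tfrac12$, using that the full $H^s(M)$ norm dominates the mixed norm $\int\lambda^{2s}\|\hat u(\cdot,\eta)\|_{L^2(dx_n)}^2\,d\eta$; the terms carrying $\p_{x_n}^{k}\widehat{\Box u}$, $k\le j-2$, require the trace of the $(j-2)$-th normal derivative of $\Box u\in H^{s-1}(M)$, which is well defined and controlled precisely when $(s-1)>(j-2)+\tfrac12$, i.e. $s+\tfrac12>j$, and these contributions are no worse. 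Collecting the worst exponent yields the claimed estimate with trace in $H^{s-j-3/2}(I\times\p\Omega)$; the half-order slack relative to the sharper $H^{s-j-1/2}$ absorbs the case $s<0$, where the full $H^s$ norm no longer dominates the tangential mixed norm and one must instead trade normal for tangential regularity through the equation, losing one order uniformly in $s\in\R$. Summing over the partition of unity and undoing the coordinate change finishes the proof.

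The main obstacle is exactly this last point: $\Box$ is not hypoelliptic across the timelike hypersurface $\Gamma$, since in the frequency region $\ell(\eta)<0$ the fibre ODE is oscillatory rather than exponentially damped, so there is no genuine gain of normal regularity and one can only argue at the level of traces on a bounded normal interval. Keeping the bookkeeping of Sobolev orders uniform in $s\in\R$—in particular for $s<0$, where the full and fibrewise norms are not comparable in the naive direction—is the delicate step and is what dictates the stated (non-sharp but sufficient) exponent $s-j-\tfrac32$.
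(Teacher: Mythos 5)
Your outline has the right geometric picture (the lateral boundary is non-characteristic for $\Box$, so one can hope to trade the equation for normal regularity), but it contains a genuine gap, and the gap sits exactly at the point that makes the lemma non-trivial. The fibrewise argument -- tangential Fourier transform, the ODE $\p_{x_n}^2\hat u=\ell(\eta)\hat u-\widehat{\Box u}$, and the parameter-dependent trace inequalities $|w(0)|^2\lesssim\lambda\norm{w}^2+\lambda^{-1}\norm{\p_{x_n}w}^2$ -- presupposes that $\hat u(\cdot,\eta)$, $\p_{x_n}\hat u(\cdot,\eta)$ and $\widehat{\Box u}(\cdot,\eta)$ are fibrewise $L^2$ functions whose weighted integrals in $\eta$ are dominated by $\norm{u}_{H^s(M)}$ and $\norm{\Box u}_{H^{s-1}(M)}$. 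This is only true when the full Sobolev norm dominates the tangential mixed norm, i.e.\ essentially when $s\geq 0$ (and even then $\p_{x_n}\hat u$ fibrewise requires the equation, not the hypothesis $u\in H^s$). In the range where the lemma goes beyond the classical trace theorem, namely $j-\tfrac12<s\leq j+\tfrac12$ -- which includes every case the paper actually uses ($s=0$, $j=0$ in Lemma \ref{lem_uniq}; $s=-\epsilon$, $j=0$ in Theorem \ref{th_rough}) -- none of these fibrewise objects is controlled, and for $s<0$ the function $\hat u(\cdot,\eta)$ need not even lie in $L^2$ on fibres. You acknowledge this yourself in the final paragraph, but you resolve it only by asserting that ``one must instead trade normal for tangential regularity through the equation, losing one order uniformly in $s$'' and that the half-order (in fact full-order) slack in the exponent ``absorbs'' the problem. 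That trading statement \emph{is} the lemma (more precisely, it is the partial hypoellipticity theorem for non-characteristic operators); asserting that it can be done, without specifying how the mixed-norm bookkeeping survives for distributional $u$, is not a proof. A second, smaller point: your bootstrap expressing $\p_{x_n}^j\hat u(0,\eta)$ via the equation only kicks in for $j\geq2$, where the hypothesis $s+\tfrac12>j$ forces $s>\tfrac32$; so the machinery you describe in detail never meets the low-regularity cases, and the cases it does meet could be handled by more elementary means.

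For comparison, the paper's proof does not attempt any of this Fourier analysis by hand: it defines the Banach space $X$ of $u\in H^s(M)$ with $\Box u\in H^{s-1}(M)$, invokes the closed graph theorem, and then cites two results of H\"ormander in the anisotropic spaces $H_{(m,s)}$ -- Theorem B.2.9, which is precisely the normal-for-tangential trade you left unproven (it yields $u\in H_{(s+1,-2)}$, gaining one normal order at the cost of two tangential orders), and Theorem B.2.7, the trace theorem in those spaces, whose hypothesis $s+1>j+\tfrac12$ is where the condition $s+\tfrac12>j$ enters and whose conclusion produces the exponent $(s+1)-2-j-\tfrac12=s-j-\tfrac32$. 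So the loss of one order relative to the sharp $s-j-\tfrac12$ is intrinsic to the trade for \emph{all} $s$, not a cushion reserved for $s<0$ as your accounting suggests. If you want to keep your self-contained route, the honest version of your plan is to reprove H\"ormander's Theorem B.2.9 for $\Box$ in boundary normal coordinates, i.e.\ to show directly that $u\in H_{(s,0)}$ and $\Box u\in H_{(s-1,0)}$ imply $u\in H_{(s+1,-2)}$ with norm control; once that is in place, your trace step becomes legitimate.
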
 
\begin{proof}
We define a norm $\norm{u}_X$ by the right-hand side of the claimed inequality, and set $X = \{u \in H^s(M) : \norm{u}_X < \infty \}$.
It follows from the closed graph theorem that $X$ is a Banach space. 
In normal coordinates of $\R \times \p \Omega$, there holds
    \begin{align*}
\Box = \p_\nu^2 + A,
    \end{align*}
where $A$ is a differential operator in the tangential directions to $\R \times \p \Omega$, with coefficients depending on all the variables, see e.g. \cite[Corollary C.5.3]{H3}.

We will use the spaces $\bar H_{(m,s)}(I \times \Omega)$, defined on p. 478 of \cite{H3},
in the boundary normal coordinates, and use the shorthand notation $H_{(m,s)}$ for them. Here $m$ measures Sobolev smoothness in all the variables and $s$ additional smoothness in the tangential variables. However, $s$ can be also negative, corresponding to a loss of smoothness in tangential directions. 

Let $u \in X$. It follows from \cite[Theorem B.2.9]{H3} that
$u \in H_{(m,r)}$ when $m+r\le s-1$ and $m \le s + 1$.
In particular, $u \in H_{(s+1,-2)}$ and the closed graph theorem implies
    \begin{align*}
\norm{u}_{H_{(s+1,-2)}} \lesssim \norm{u}_X.
    \end{align*}
Moreover, using the assumption $s+1/2 > j$, \cite[Theorem B.2.7]{H3} implies 
    \begin{align*}
\norm{\p_\nu^j u}_{H^{s-j-\frac32}(I \times \p \Omega)}
\lesssim \norm{u}_{H_{(s+1,-2)}}.
    \end{align*}
\end{proof} 

\section{Estimates for meshes fitted to the boundary}
\label{appendix_bd}

\begin{proof}[Proof of Lemma \ref{lem_trace_bd}]
Let $u \in C^\infty(K)$.
Let $h>0$, $K \in \T$ and 
consider spherical coordinates $(r,\theta) \in (0,\infty) \times S^n$ centered at $x$
where $x$ is as in (T). It follows from (\ref{T_reg}) that $K$
is star-shaped with respect to $x$. In particular, there is $R : S^n \to (0,\infty)$ such that 
    \begin{align*}
 K = \{(r,\theta) : 0 \le r \le R(\theta),\ \theta \in S^n\}.
    \end{align*}
As $\p K$ is piecewise smooth, it follows from (\ref{T_reg}) that $R$ is piecewise smooth. Applying \cite[Theorem 6, p. 713]{EvansPDEbook} in a piecewise manner, we see that 
    \begin{align*}
\int_{\p K} u^2\,\nu \cdot \rho \d s = \int_{S^n} u^2(R(\theta), \theta) R(\theta)^n \d\theta,
    \end{align*}
where $\mbox{d}\theta$ is the canonical volume measure on the unit sphere $S^n$. It follows from (\ref{T_unif}) that $h \lesssim R(\theta)$. Hence, using (\ref{T_reg}) we have
    \begin{align*}
h \int_{\p K} u^2 \d s 
&\lesssim 
\int_{S^n} u^2(R(\theta), \theta) R(\theta)^{1+n} \d\theta
= 
\int_{S^n} \int_0^{R(\theta)} \p_r (u^2(r, \theta) r^{1+n}) \d r \d\theta
\\&\lesssim 
\int_{S^n} \int_0^{R(\theta)} (u^2 + |u| |r\p_r u|) r^{n} \d r \d\theta
\lesssim \norm{u}_{K}^2 + \norm{h\nabla u}_{K}^2.
    \end{align*}
\end{proof} 

\begin{proof}[Proof of Lemma \ref{lem_trace_bd}]
We choose an extension $\hat u$ of $u$ so that $\norm{\hat u}_{H^k(\R^{1+n})} \lesssim \norm{u}_{H^k(M)}$.
Let $u_h \in \hat V_h^p$ be the Scott--Zhang interpolation of $\hat u$
where 
    \begin{align*}
\hat V^p_{h} = \{u \in H^1(M_h) : u |_K \in \mathbb{P}_p(K)
\text{ for all $K \in \hat{\mathcal T}_h$} \}.
    \end{align*}
Clearly $u_h|_M \in V^p_h$, and the now classical result \cite{SZ90} says that the analogue 
    \begin{align*}
\norm{\hat u- u_h}_{H^k(\hat \T)}
\lesssim h^k \norm{\hat u}_{H^k(\R^{1+n})}
    \end{align*}
of (\ref{interp2}) holds. This implies (\ref{interp2}) since $\T$ is obtained from $\hat \T$ via a restriction.
\end{proof}

\bibliographystyle{abbrv}
\bibliography{refs}
\ifoptionfinal{}{
}
\end{document}